\documentclass[english]{article}
\usepackage{float}
\usepackage{algorithm,algorithmic}
\usepackage{amsmath}
\usepackage{amssymb}
\usepackage{graphicx}
\usepackage{bm}
\usepackage{amsthm}
\usepackage{subcaption}
\usepackage{enumitem}
\usepackage[title]{appendix}

\usepackage{geometry}
\newgeometry{
	textheight=9in,
	textwidth=5.5in,
	top=1in,
	bottom = 1.1in,
	headheight=12pt,
	headsep=25pt,
	footskip=30pt,
	lmargin=.8in,rmargin=.8in
}

\usepackage[utf8]{inputenc} 
\usepackage[T1]{fontenc}    
\usepackage{url}            
\usepackage{booktabs}       
\usepackage{amsfonts}       
\usepackage{nicefrac}       
\usepackage{microtype}  
\usepackage{mathrsfs}

\numberwithin{equation}{section}
\usepackage{hyperref}
\hypersetup{
	colorlinks=true,
	linkcolor=red,
	filecolor=magenta,      
	citecolor=blue,
}
\urlstyle{same}

\newtheorem{theo}{Theorem}
\newtheorem{lem}{Lemma}
\newtheorem{prop}{Proposition}

\newtheorem{defi}{Definition}

\newtheorem{claim}{Claim}
\newtheorem{example}{Example}

\numberwithin{theo}{section}
\numberwithin{lem}{section}
\numberwithin{prop}{section}
\numberwithin{alg}{section}
\numberwithin{assum}{section}
\numberwithin{defi}{section}
\numberwithin{coro}{section}
\numberwithin{claim}{section}
\numberwithin{example}{section}

\theoremstyle{remark}
\newtheorem{remark}{Remarks}
\numberwithin{remark}{section}

\usepackage{stfloats}
\usepackage{wrapfig}

\usepackage{babel}

\usepackage{mathtools}
\def\Tiny{\fontsize{4pt}{4pt}\selectfont}
\newcommand*{\eqdef}{\ensuremath{\overset{\mathclap{\text{\Tiny def}}}{=}}}

\def\Tiny{\fontsize{4pt}{4pt}\selectfont}

\title{Equilibrate Parametrization:  Optimal   Metric  Selection  with 
	 Provable One-iteration Convergence for $ l_1 $-minimization  
%
%
 } 
	
\author{ 	
	Yifan Ran
}
\date{}
\begin{document}
	\maketitle
	\makeatletter{\renewcommand*{\@makefnmark}{}
\footnotetext{
The	author is with the department of Electrical and Electronics Engineering, Imperial College London, UK (e-mail: y.ran18@imperial.ac.uk).  \makeatother}}

\begin{abstract}
	 Incorporating a non-Euclidean variable metric to  first-order algorithms is known to bring enhancement. 
	 However, due to the lack of an optimal choice,  such an enhancement appears significantly underestimated.	
	 In this work, we establish a   metric  selection principle via optimizing a convergence rate upper-bound.  
	 For  general $ l_1 $-minimization, we  propose  an  optimal  metric choice with  closed-form  expressions guaranteed. Equipping such  a variable metric, we prove that  the  optimal solution to the $ l_1 $ problem will be obtained via a one-time proximal operator evaluation.
	 Our technique applies to a large  class of fixed-point algorithms,  particularly the ADMM,  which is  popular, general,  and requires minimum assumptions.

	The key to  our success  is the employment of an unscaled/\textit{equilibrate}  upper-bound. We show that  there exists an implicit  scaling that poses a  hidden obstacle to optimizing parameters. This turns out to be a  fundamental issue induced by the classical  parametrization. We  note that the conventional way always associates the parameter to the range of a function/operator.  This  turns out not a natural  way,   causing  certain symmetry losses, definition  inconsistencies,  and unnecessary complications, with the well-known Moreau identity being the best example.   
	We propose  \textit{equilibrate parametrization}, which associates the parameter to the domain of a function, and to both the domain and range of a  monotone operator. A series  of powerful  results are  obtained owing to the new parametrization.
	Quite remarkably, the preconditioning  technique can be  shown as equivalent to the metric selection issue.

\end{abstract}

\vspace{5pt}
\noindent 
$ \textbf{Keywords:} $\,  
 	Monotone operators, 
 	Fixed-point theory, 
 	 Variable metric  methods,
 	Optimal preconditioning,
 	Fenchel dual,
 	{Attouch–Th\'era} dual,
 	Self-duality,
 	Proximal operator,
 	Optimal parameter selection,
 	Convergence rate,
	Alternating  Direction Method of Multipliers (ADMM),
	Douglas-Rachford splitting (DRS),
	 $ l_1 $-regularization

\section{Introduction}
Convex programming \cite{rockafellar1997convex,boyd2004convex} is a powerful and well-established technique that  finds numerous applications across different fields.
Despite  the great power  and numerous benefits of a convex program, solving it often requires a significant amount of computational power. 
For practical problems, the  significance of  this issue is dramatically  amplified   due to their large-scale nature.  
How  to efficiently solve  an  optimization problem  is therefore  of  central importance.
Over the  years, intensive efforts have  been devoted to developing various algorithms.  
Among all candidates, one of the most promising and popular choices is the first-order algorithms.
As its name indicates, only first-order information (gradient or sub-gradient) is used. This contributes to  a  low per-iteration cost and  low memory storage, see a comprehensive survey by Amir Beck \cite{beck2017first}, and a simplified  view  by Marc Teboulle \cite{teboulle2018simplified}.

The first-order algorithms are a large family, and  a particularly popular one being the Alternating  Direction Method of Multipliers (ADMM) \cite{boyd12,glowinski1975approximation,gabay1976dual}.   
It receives  an increasing  amount of attention,  owing to  its mild convergence requirement,  outstanding practical performance, and  a natural structure to allow distributed/parallel optimizations.  
Specifically, unlike the  gradient-based methods where a  Lipschitz type of assumption is often needed to guarantee convergence, the requirement for ADMM  is as basic as convexity.   
In fact, even for non-convex problems, there  exist some  empirical successes, see a discussion in \cite{boyd12} and reference therein.  
The superior practical performance of  the ADMM has been driving fundamental changes.    
A series of works has been devoted to replacing   the classical general solver, originally built in other approaches such as the interior-point methods \cite{karmarkar1984new, nesterov1994interior}, into ADMM.
Some remarkable successes in building such a general ADMM-based  solver  include:  Stellato and  Boyd, et.al.   \cite{stellato2020osqp}  for quadratic problems,
O’Donoghue and Boyd, et. al. \cite{o2013operator}  for conic optimization, Zaiwen Wen et al. \cite{wen2010alternating} for  semidefinite programming.

During the development of  first-order algorithms, the proximal  operator \cite{moreau1965proximite, proxi_algs}  is shown to be an ideal analysis tool \cite{teboulle2018simplified}. It is  strongly  related to the monotone operators  \cite{bauschke2017convex, ryu2022large, ryu2016primer}, and enjoys the fixed-point theory there.
It is a standard and elegant tool that  connects  many seemingly  unrelated   algorithms.
For example, while ADMM was independently proposed  by Glowinski and Marrocco \cite{glowinski1975approximation} in 1975, and Gabay and Mercier \cite{gabay1976dual} in 1976, it was later shown as  equivalent to many other algorithms.  
A particularly important one is the Douglas-Rachford splitting (DRS) \cite{douglas1956numerical} from numerical analysis. In \cite{eckstein1989splitting}, Eckstein shows the equivalence of ADMM and DRS, via the Fenchel dual problem. In fact, via an infimal post-composition  technique, one can show  it directly from the primal problem, see \cite[sec. 3.1]{ryu2022large}. 
Another closely related one is the Peaceman-Rachford splitting (PRS) \cite{peaceman1955numerical}, which can be viewed as the  DRS/ADMM algorithm without an averaging step, see \cite{eckstein1989splitting}. 
Recently, a popular algorithm from image processing, 
the   Primal-Dual Hybrid Gradient (PDHG) method \cite{chambolle2011first, esser2010general, pock2009algorithm} turned  out equivalent to ADMM, proved by  O'Connor and  Vandenberghe \cite{o2020equivalence}. Additionally, there exists a series  of strong connections  between ADMM and Spingarn’s method of partial inverses, Dykstra’s alternating projections method, Bregman iterative algorithms for $ l_1 $ problems in signal	processing,  see more details from the review paper by  Boyd  et  al. \cite{boyd12}.  
The fact that ADMM is intrinsically connected to so many different  algorithms across various fields implies  an appealing underlying structure.


Despite the  many advantages, like   all other first-order algorithms, ADMM inherits a common weakness.
It is well-known that the performance of  first-order algorithms largely depends  on the conditioning of the problem data. 
A common heuristic alleviating this issue is  called \textit{preconditioning} \cite{nocedal1999numerical,benzi2002preconditioning, chen2005matrix, fougner2018parameter}. 
It aims to reduce the  iteration number complexity. A typical heuristic is  by appealing to  
the so-called  \textit{condition number} $\kappa  =  \sigma_{max}/\sigma_{min}$, the ratio of the largest and smallest eigenvalues.
Specifically, consider a linear system $ \bm{Ax} = \bm{b} $,  which can be rewritten  into $ \bm{EAx} = \bm{Eb} $, with $ \bm{E} $ being a full-rank square matrix.
One would select preconditioner $ \bm{E} $ such that the new matrix $ \bm{EA} $ has a  better condition number than the original one, see e.g. \cite{stellato2020osqp,giselsson2014diagonal,fougner2018parameter}. 
Ideally, we would like to attain an  optimal  condition number.  In \cite{boyd1994linear}, Boyd et al.   show that one can find it by solving a  semidefinite program (SDP), except  it  adds significant computational cost and hence is  not  practically useful.

While such a condition-number approach is perhaps the most popular one, there is no theoretical guarantee that  the iteration-number  complexity or convergence rate will necessarily  improve.  
A natural question  is  --- can we select a preconditioner by   optimizing  the convergence rate (rather than the condition number)?
Unfortunately,
as pointed out in  a recent paper by Bartolomeo Stellato and Stephen Boyd  et al. \cite[sec. 1.2, First-order methods]{stellato2020osqp} `\textit{Despite some recent theoretical results \cite{banjac2018tight,giselsson2016linear}, it remains unclear how to select those parameters to optimize the algorithm convergence rate}'.

Fortunately,  a most  recent breakthrough  on  this issue was  made  by  Yifan Ran \cite{ran2023general}, where  a scalar parameter that optimizes a convergence rate bound is found  for  ADMM. 
 It is appealing due to a closed-form expression always exists, regardless of the initialization choice. 
Our optimal metric result can be viewed as an extension   and meanwhile much more powerful.  
On one hand, this is not surprising, as   the enhancement by employing  a variable metric  has  long been  realized, see e.g. \cite{ fletcher1970new, powell1983variable, davidon1991variable, bonnans1995family, combettes2014variable}. 
On the other hand, the degree of enhancement appears significantly  underestimated in the literature. To interpret its power,  we may view employing a variable metric to the Euclidean space as introducing  curvature.  That said, the original `straight-line' step-size now becomes curved. 
Then, in principle, regardless of the initialization point and  initial direction,  an  optimal curved step-size should yield one-iteration convergence. Indeed, we are  able  to  prove such an ultimately powerful case for $ l_1 $-minimization and, quite remarkably, 
the optimal choice is in a closed-form expression.  While it is of great  interest, at this stage, it is not yet ready for practical use, due to part of optimal point information (an element-wise ratio  knowledge) is needed. The successive estimation technique from \cite{ran2023general} appears not to work well  when applied element-wisely. How to perform a good estimation such  that near  one-iteration convergence can be attained for  practical use is left for future research.




Our technique  is in a simple manner, minimizing an unscaled/\textit{equilibrate} upper-bound. The novelty is an  unscaling process and the rest is standard.  
We reveal that there exists an extra, implicit, parameter-dependent scaling  in the bound, which does not affect  the convergence property but  poses an obstacle  when  optimizing  the parameter.
Mathematically  speaking, we show  that a (firmly) non-expansive monotone operator admits a  class of scaled variants,  yielding  parallel converging sequences.  
This tricky issue  is perhaps the main reason that the  optimal parameter is unknown for algorithms such as ADMM (48-year open-problem) and  DRS (67-year open-problem). 
It turns out the extra scaling is induced by the classical  parametrization itself, due to that the parameter is  associated with the range of a function/operator.  While it is intuitively natural to  do so, it is not mathematically a good idea.
It is the cause of some inconsistent definitions in the  proximal operator, also   symmetry loss and unnecessary complications in the well-known Moreau identity.
We propose to identify and remove the extra scaling by appealing to the symmetric counterpart of an operator/function, such as  the inverse  or  adjoint. This general idea has an available interpretation as the duality.    
Alternatively,  we propose the  \textit{equilibrate parametrization}  that naturally avoids this issue.
It associates the parameter to the domain of a function, and to both the domain  and range of a  monotone operator.

Apart from the  above basic  benefits, there is more to  the \textit{equilibrate parametrization}. When we apply it to the ADMM, we obtain a direct, unified analysis framework. 
This  allows a direct fixed-point analysis on ADMM rather than the conventional way of converging to DRS, see  \cite[sec. 3.1, 3.2]{ryu2022large},  \cite{poon2019trajectory}. 
 When we  apply the \textit{equilibrate parametrization}  to the convex program itself, we reveal   a  unified  and completely symmetric primal-dual form. 
 Moreover, during the derivations, we find that the  variable metric plays   exactly the same role as a preconditioner,  leading to an interesting  conclusion  ---  {Preconditioning} is equivalent to the metric selection issue.  
 That is, the optimal variable metric obtained in this work will  also be the optimal preconditioner. As a consequence, the open problem of  optimal preconditioning is solved, see e.g.  O’Donoghue and  Boyd et al. `\textit{whether there is an optimal scaling remains open}' \cite[sec. 5]{o2016conic}.


On  the application side, we consider the class of $ l_1 $-norm regularized problems. It is of great interest  and receives intensive  studies owing to a great breakthrough,   \textit{compressed sensing}, 
see a systematic  study of the issue  by  Donoho, Candes,  Romberg, Tao, et. al.  \cite{donoho2006compressed,donoho2006most,candes2006robust,candes2005decoding,candes2006stable,candes2006near,chen2001atomic}.  Also, there exist huge efforts to  finding efficient algorithms, see e.g.  Tropp, Gilbert, Needell, Dai, Yin et al. \cite{tropp2007signal, needell2009cosamp, dai2009subspace,yin2008bregman}. Such $ l_1 $-minimization finds numerous practical uses, such as  computer vision \cite{chen2006total,yang2010review},    image inpainting  \cite{elad2005simultaneous}, compressive imaging \cite{gan2007block}, MRI \cite{lustig2008compressed}, and many others, see references from a  review  paper  \cite{rani2018systematic}.

%
%
%
%

%
%
%
%
%
%
%
%
%

\subsection{Convergence rate optimization}
Here,  we adopt a convergence  rate result  from Ernest K. Ryu and Wotao Yin \cite{ryu2022large} because it is self-contained  and generally applicable to any converging  fixed-point algorithm. We will minimize its upper-bound, owing  to their generality,  our results will  also  be generally applicable.  
\begin{lem}\cite[Theorem 1]{ryu2022large} \label{lem_rate}
	Assume $\mathcal{F}: \mathbb{R}^n \rightarrow \mathbb{R}^n $ is $\theta$-averaged with $ \theta \in\, ]0,1[ $ and  $\text{Fix}\,\, \mathcal{F} \neq \emptyset $. Then, $ \bm{\zeta}^{k+1} = \mathcal{F}{\bm{\zeta}^{k}} $ with any starting point $ \bm{\zeta}^0 \in \mathbb{R}^n $ converges to one fixed-point, i.e., 
	\begin{equation}
	\bm{\zeta}^k \rightarrow \bm{\zeta}^\star
	\end{equation}
	for some $ \bm{\zeta}^\star \in \text{Fix}\,\, \mathcal{F}$. The quantities $ \text{dist}\,\, (\bm{\zeta}^k , \text{Fix}\,\, \mathcal{F}) $, $ \Vert \bm{\zeta}^{k+1} - \bm{\zeta}^k  \Vert $, and $ \Vert \bm{\zeta}^k  - \bm{\zeta}^\star \Vert $ for any $ \bm{\zeta}^\star \in \text{Fix}\,\, \mathcal{F}$ are monotonically non-increasing with $ k $. Finally, we have
	\begin{equation}
	\text{dist}\,\, (\bm{\zeta}^k , \text{Fix}\,\, \mathcal{F}) \rightarrow 0,
	\end{equation}
	and
	\begin{equation}\label{rate}
	\Vert \bm{\zeta}^{k+1} - \bm{\zeta}^k  \Vert^2 \leq \frac{\theta}{(k+1)(1-\theta)}	\text{dist}^2\,\, (\bm{\zeta}^0 , \text{Fix}\,\, \mathcal{F}).
	\end{equation}
\end{lem}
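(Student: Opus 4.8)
The plan is to reduce everything to the single quadratic inequality that characterizes an averaged operator and then run a textbook Fej\'er-monotonicity plus telescoping argument. Recall that $\mathcal{F}$ being $\theta$-averaged is equivalent to the estimate
\begin{equation*}
\|\mathcal{F}\bm{x} - \mathcal{F}\bm{y}\|^2 \;\le\; \|\bm{x}-\bm{y}\|^2 - \tfrac{1-\theta}{\theta}\,\bigl\|(\mathrm{Id}-\mathcal{F})\bm{x} - (\mathrm{Id}-\mathcal{F})\bm{y}\bigr\|^2
\end{equation*}
for all $\bm{x},\bm{y}\in\mathbb{R}^n$ (obtained by writing $\mathcal{F}=(1-\theta)\mathrm{Id}+\theta\mathcal{R}$ with $\mathcal{R}$ nonexpansive and expanding the convex combination). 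First I would specialize this to $\bm{x}=\bm{\zeta}^k$ and $\bm{y}=\bm{\zeta}^\star$ an arbitrary point of $\text{Fix}\,\mathcal{F}$, so that $\mathcal{F}\bm{y}=\bm{y}$ and $(\mathrm{Id}-\mathcal{F})\bm{y}=\bm{0}$; since $(\mathrm{Id}-\mathcal{F})\bm{\zeta}^k=\bm{\zeta}^k-\bm{\zeta}^{k+1}$ this gives
\begin{equation*}
\|\bm{\zeta}^{k+1}-\bm{\zeta}^\star\|^2 \;\le\; \|\bm{\zeta}^k-\bm{\zeta}^\star\|^2 - \tfrac{1-\theta}{\theta}\,\|\bm{\zeta}^{k+1}-\bm{\zeta}^k\|^2 .
\end{equation*}
Dropping the last term shows $\|\bm{\zeta}^k-\bm{\zeta}^\star\|$ is non-increasing for every $\bm{\zeta}^\star$, hence (taking the infimum over $\bm{\zeta}^\star$ in the nonempty closed set $\text{Fix}\,\mathcal{F}$, closed because $\mathcal{F}$ is nonexpansive and thus continuous) $\text{dist}(\bm{\zeta}^k,\text{Fix}\,\mathcal{F})$ is non-increasing as well. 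Applying the same estimate instead with $\bm{x}=\bm{\zeta}^k$, $\bm{y}=\bm{\zeta}^{k-1}$ and again discarding the nonnegative correction term yields $\|\bm{\zeta}^{k+1}-\bm{\zeta}^k\|\le\|\bm{\zeta}^k-\bm{\zeta}^{k-1}\|$, i.e.\ the residuals decrease monotonically.

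Next I would telescope. Summing the displayed per-$\bm{\zeta}^\star$ inequality over $k=0,\dots,K$ collapses the right-hand side to $\|\bm{\zeta}^0-\bm{\zeta}^\star\|^2-\|\bm{\zeta}^{K+1}-\bm{\zeta}^\star\|^2\le\|\bm{\zeta}^0-\bm{\zeta}^\star\|^2$, giving $\tfrac{1-\theta}{\theta}\sum_{k=0}^{K}\|\bm{\zeta}^{k+1}-\bm{\zeta}^k\|^2\le\|\bm{\zeta}^0-\bm{\zeta}^\star\|^2$; as this holds for every $\bm{\zeta}^\star\in\text{Fix}\,\mathcal{F}$, I may pass to the infimum on the right and replace it by $\text{dist}^2(\bm{\zeta}^0,\text{Fix}\,\mathcal{F})$. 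Then, using the monotonicity of $\|\bm{\zeta}^{k+1}-\bm{\zeta}^k\|$ to bound the sum from below by $(K+1)\|\bm{\zeta}^{K+1}-\bm{\zeta}^K\|^2$ and rearranging produces exactly the rate \eqref{rate}. Letting $K\to\infty$ in the summability bound also shows $\|\bm{\zeta}^{k+1}-\bm{\zeta}^k\|\to 0$, i.e.\ $(\mathrm{Id}-\mathcal{F})\bm{\zeta}^k\to\bm{0}$; combined with the already-established fact that $\text{dist}(\bm{\zeta}^k,\text{Fix}\,\mathcal{F})$ is non-increasing, a short argument (below) upgrades this to $\text{dist}(\bm{\zeta}^k,\text{Fix}\,\mathcal{F})\to 0$.

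Finally, for convergence to a single fixed point: Fej\'er monotonicity with respect to $\text{Fix}\,\mathcal{F}$ makes $(\bm{\zeta}^k)$ bounded, so it has a cluster point $\bar{\bm{\zeta}}$; continuity of $\mathcal{F}$ together with $(\mathrm{Id}-\mathcal{F})\bm{\zeta}^k\to\bm{0}$ forces $\mathcal{F}\bar{\bm{\zeta}}=\bar{\bm{\zeta}}$, so $\bar{\bm{\zeta}}\in\text{Fix}\,\mathcal{F}$; and then $\|\bm{\zeta}^k-\bar{\bm{\zeta}}\|$ being non-increasing while a subsequence of it tends to $0$ forces the whole sequence to converge to $\bm{\zeta}^\star:=\bar{\bm{\zeta}}$, after which $\text{dist}(\bm{\zeta}^k,\text{Fix}\,\mathcal{F})\le\|\bm{\zeta}^k-\bm{\zeta}^\star\|\to 0$. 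I do not expect a genuine obstacle here; the only points needing a little care are the legitimacy of interchanging the finite sum with the infimum over $\text{Fix}\,\mathcal{F}$ (clean, because the inequality holds uniformly for each $\bm{\zeta}^\star$ before the infimum is taken) and making sure the constant $\theta/(1-\theta)$ lands in the right place when passing from the telescoped bound to \eqref{rate}. The real content is the single averaged-operator inequality stated at the outset; everything downstream is bookkeeping.
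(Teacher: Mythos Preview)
Your argument is correct and is the standard textbook proof of this Krasnosel'ski\u{\i}--Mann--type result: the averaged-operator inequality, Fej\'er monotonicity, telescoping, and the cluster-point uniqueness argument are exactly what one finds in the cited reference. Note, however, that the paper does not supply its own proof of this lemma at all --- it is simply quoted as \cite[Theorem~1]{ryu2022large} and used as a black box, so there is nothing to compare against beyond observing that your write-up matches the standard derivation in that source.
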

The above  notion  `dist' denotes a best upper-bound, i.e., an abstract optimization is performed,
\begin{equation}
\text{dist} (x, \Omega) = \underset{z \in \Omega}{inf} \,\, \Vert z  - x  \Vert^2.
\end{equation} 
Intuitively, such a   best upper-bound can be explicitly found by invoking  specific fixed-point definitions. This would  reveal  the optimal parameter  choice.
For example,  a well-known ADMM  fixed-point  is  $ \gamma\bm{Ax}^\star  + \bm{\lambda}^\star $, with $ \gamma > 0 $ denoting   the step-size and
$ \cdot^\star  $ denotes the optimal solution and is fixed given a convex program. It appears that the optimal choice $ \gamma^\star $ should be easily determined.
However, later we will show that such a direct attempt would  fail. Indeed,  as pointed out in \cite{ryu2022large}  `\textit{The general question of how to optimally choose the scalar   $\gamma$ is open}'.

Hence, we may delay the optimization and start with  the original case:
\begin{equation}\label{rate00}
\Vert \bm{\zeta}^{k+1} - \bm{\zeta}^k  \Vert^2 \leq \frac{\theta}{(k+1)(1-\theta)} \Vert \bm{\zeta}^\star - \bm{\zeta}^0 \Vert^2,
\end{equation}
where $ \bm{\zeta}^\star \in \text{Fix}\,\, \mathcal{F} $ is a fixed-point. Let us note that $ \theta $ is fixed  given a certain  algorithm, and $ k $ being the iteration number counter is clearly independent of an algorithm parameter. That is, essentially only the norm term  is relevant,  hence  the topic here can also be viewed as a fixed-point selection issue.

\subsection{ADMM algorithm}
The ADMM algorithm solves the following general convex problem:
\begin{align}\label{ADMMpro}
&\underset{\bm{x},\bm{z}}{\text{minimize}}\quad   f(\bm{x}) + g(\bm{z})  \nonumber\\
&\text{subject\,to}\quad  \bm{A}\bm{x} - \bm{B}\bm{z} = \bm{c} ,
\end{align}
with variables $ \bm{x} \in \mathbb{R}^n $, $ \bm{z} \in \mathbb{R}^m $, where $ \bm{A}\in \mathbb{R}^{p \times n}  $, $ \bm{B}\in \mathbb{R}^{p \times m}  $, $ \bm{c}\in \mathbb{R}^{p}  $. We assume 
functions $ f, g $ are closed, convex, and proper (CCP), and matrices $ \bm{A} $ and $ \bm{B} $ have full column rank. 	

\subsubsection{Iterates}
The ADMM iterates are based on the following augmented Lagrangian: 
\begin{equation}\label{L_gamma}
\mathcal{L}_\gamma(\bm{x},\bm{z},\bm{\lambda}) 
\,\eqdef\,  f(\bm{x}) + g (\bm{z})  + \frac{\gamma}{2}\Vert\bm{A}\bm{x} - \bm{B}\bm{z} - \bm{c} + \bm{\lambda}/\gamma\Vert^2,  
\end{equation}	
where $\gamma > 0$ denotes a positive step-size.	 The ADMM iterates are 
\begin{align}\label{admm_iterates}
\bm{x}^{k+1} =\,\,& \underset{\bm{x}}{\text{argmin}} \,\, 	\mathcal{L}_\gamma(\bm{x},\bm{z}^k,\bm{\lambda}^k)  \nonumber\\				
\bm{z}^{k+1} =\,\, & \underset{\bm{z}}{\text{argmin}} \,\, 	\mathcal{L}_\gamma(\bm{x}^{k+1},\bm{z},\bm{\lambda}^k) \nonumber\\
\bm{\lambda}^{k+1} =\,\,  &\bm{\lambda}^{k} + \gamma( \bm{A}\bm{x}^{k+1} - \bm{B}\bm{z}^{k+1} - \bm{c}). 	
\end{align}


\subsubsection{Preconditioning}
Here, we  introduce the  preconditioning technique. We consider the  general abstract  ADMM type of formulation as in \eqref{ADMMpro}. 
Preconditioning  can be written as  
\begin{align}\label{precon}
&\underset{\bm{x},\bm{z}}{\text{minimize}}\quad  \gamma \bigg(  f(\bm{x}) +  g(\bm{z}) \bigg) \nonumber\\
&\text{subject\,to}\quad  \bm{E}(\bm{A}\bm{x} - \bm{B}\bm{z}) = \bm{E}\bm{c} ,
\end{align}
with data scaling factor $\gamma>0$  and a full-rank preconditioner $ \bm{E} $. See related work from  \cite[Sec. 5]{stellato2020osqp}, \cite[Sec. 5]{o2016conic}, \cite[Sec. 4.4.1]{fougner2018parameter}, \cite{teixeira2015admm}.

%
%
%
%
%
%
%
%
%
%


\subsection{Notations}
The Euclidean space  is denoted by $  \mathscr H $   with inner product  $ \langle \cdot, \, \cdot \rangle   $ equipped, and its induced norm $ \Vert \cdot \Vert $. We denote  its  extension to a metric space as $  \mathscr {H}_{\mathcal M} $,
with norm $\Vert\bm{v} \Vert_{\mathcal{M}} = \sqrt{\langle \bm{v},  \mathcal{M}\bm{v}\rangle} $.
We denote by $  \Gamma_0 (\mathscr H) $  the  space  of  convex, closed  and proper  (CCP) functions from $ \mathscr H $  to the extended real line  $ ]-\infty, +\infty]$,
by  $ \mathfrak{B}(\mathscr{H}, \mathscr{L}) $ the space of bounded linear operators from $ \mathscr{H}$ to $ \mathscr{L} $, with domain  $\mathscr{H}$. The abbreviation  $ \mathfrak{B}(\mathscr{H}) $ corresponds to when $ \mathscr{H} = \mathscr{L} $.
At last, the uppercase bold, lowercase bold, and not bold letters are used for matrices, vectors, and scalars, respectively. The uppercase calligraphic letters, such  as $ \mathcal{A} $ are used to denote  operators.

\subsection{Organization}
This paper is organized as follows: 
To start, we briefly review some useful relations for the monotone operators. 
In Sec. \ref{sec_pre}, we introduce our \textit{equilibrate upper-bound minimization technique} and explain  why  an unscaling process is needed. 
Particularly, in  Sec. \ref{sec_cri}, by studying symmetry, we show  how to reveal  and then remove  an extra scaling.  
Following this strategy,  in Sec. \ref{sec_class}, we identify the  source  of the  issue, by appealing to the duality. It reveals that the classical parametrization itself introduces  an extra scaling. 
In Sec. \ref{sec_equ}, we propose \textit{equilibrate parametrization} that addresses a  series of issues.  To further exploit its power,
(i)  In Sec. \ref{sec_admm}, we apply it to the ADMM  algorithm. This  yields a new, simple, and unified  fixed-point  analysis framework.
 (ii)  In Sec. \ref{sec_frame}, we apply  it to the convex  program itself, obtaining a completely symmetric primal-dual problem. 
 Remarkably, the analysis there reveals the  equivalence between preconditioning and the metric selection issue. 
 At last, in Sec. \ref{sec_opt_met}, we  construct  a closed-form metric choice for the $ l_1 $-minimization that  admits a one-iteration convergence property.

\subsection{Highlight}
Here, we briefly summarize two general results, see more details in Sec. \ref{sec_opt_met}.

\vspace{5pt}

(i) For the ADMM algorithm, the general optimal choice of  metric $ \mathcal M   =  \mathcal S^* \mathcal S$ can  be found via
\begin{align}\
 \underset{\mathcal S  }{\text{minimize}}\,\,  	 \Vert \mathcal S\bm{Ax}^\star\Vert^2  +  \Vert (\mathcal{S}^*)^{-1} \bm{\lambda}^\star    \Vert^2 - 2 \langle\mathcal S\bm{A}\bm{x}^\star, \bm{\zeta}^0 \rangle -    2 \langle(\mathcal{S}^*)^{-1} \bm{\lambda}^\star,  \bm{\zeta}^0 \rangle,
\end{align}
with $ \mathcal S  \in \mathfrak B (\mathscr H) $  being bijective, where $  \bm{\zeta}^0 = \bm{Ax}^0  + \bm{\lambda}^0 $   is an arbitrary initialization and where  $ \mathcal S^* $  denotes the adjoint operator.

\vspace{5 pt}

(ii)  Consider the following $ l_1 $-minimization:
\begin{align}\label{l1_or}
\underset{\bm{x}}{\text{minimize}}\,\,\,   f(\bm{x}) + \alpha\Vert \bm{Fx} \Vert_1,
\end{align}
with  $ f \in \Gamma_0 (\mathscr H)  $  being smooth,   $\alpha > 0$ a   regularization parameter, and $ \bm{F} $ a full column-rank  matrix.
Let the optimal variable metric be defined as
\begin{equation}
\mathcal{M^\star} (\bm{v})  = \text{abs}\bigg(  {\bm{\lambda}^\star} \oslash {\bm{Fx}^\star}   \bigg) \, \odot\, \bm{v},
\end{equation}
where  by $ \oslash $ and $ \odot $ we denote the Hadamard/element-wise division, and element-wise multiplication, respectively;
by `$ \text{abs} $' the element-wise absolute value;  by $ \bm{x}^\star $ and $ \bm{\lambda}^\star $ the primal and dual optimal solutions. See extra  details from Proposition \ref{prop_matrix_step} when  zero  elements are involved.

Then,  the following proximal  evaluation: 
\begin{equation}
\bm{x}^1 = \underset{\bm{x}}{\text{argmin}} \,\, f (\bm{x} ) + \frac{1}{2}\Vert\bm{Fx} \Vert^2_{\mathcal{M}^\star},
\end{equation}
yields an optimal solution to \eqref{l1_or}.

\section{Background: monotone operators}
Here, we introduce some basic definitions of a  useful tool, the monotone operators. It is highly unifying and strongly connected to the fixed-point theory.
We refer the interested readers to \cite{bauschke2017convex} for a systematic study.

Let  $ \mathcal{A}: \mathscr H \rightarrow 2^\mathscr H$ be a set-valued operator, $ (\bm{x}, \bm{u}) $ an  ordered pair. Then, $ \mathcal{A} $ is described by its graph:
\begin{equation}
\text{gra}\,\mathcal{A} = \big\{	(\bm{x}, \bm{u}) \in 	\mathscr H \times \mathscr H \,\,\vert\,\, \bm{u} \in \mathcal{A} \bm{x}	\big\},
\end{equation}
Its inverse is defined as
\begin{equation}
\bm{x} \in   \mathcal{A} ^{-1}\bm{u}, \qquad\qquad	 \forall(\bm{x}, \bm{u}) \in 	\mathscr H \times \mathscr H,
\end{equation}
which always exists, see \cite{combettes2014variable}.

The operator $ \mathcal{A} $ is \textit{monotone} if 
\begin{equation} 
\langle \bm{u} - \bm{v} , \, \bm{x} -\bm{y}\rangle \geq 0,
\qquad  \forall (\bm{x}, \bm{u}), (\bm{y}, \bm{v}) \in  \text{gra}\,\mathcal{A}.
\end{equation}
and \textit{maximally monotone} if it is monotone and there exists no monotone operator $ \mathcal{B}:   \mathscr H \rightarrow 2^\mathscr H$ such that 
\begin{equation}
\text{gra}\,\mathcal{A} \subset \text{gra}\,\mathcal{B} \qquad \text{and} \qquad \mathcal{A} \neq \mathcal{B},
\end{equation}
see a graphical interpretation in \cite[fig.5]{combettes2014variable}.

\begin{example}[sub-differential operator]
	Given a proper function $ f: \mathscr H \rightarrow \, ]-\infty, +\infty] $, its associated  sub-differential operator $\partial f: \mathscr H \rightarrow 2^\mathscr H$ is defined as 
	\begin{equation}
	\partial f (\bm{x}) = \{  \bm{u} \in  \mathscr H \, \vert\, \langle \bm{u},  \bm{y} - \bm{x} \rangle + f(\bm{x}) \leq   f(\bm{y}) , \quad\forall  \bm{y} \in  \mathscr H\}.
	\end{equation}
	For  $ f \in \Gamma_0 (\mathscr H)$,   $\partial f$ is maximal monotone.
\end{example}

The operator $ \mathcal{A} $ is  \textit{nonexpansive} if
\begin{equation} \label{non}
\Vert \bm{u} - \bm{v} \Vert^2 \leq \Vert \bm{x} - \bm{y} \Vert^2,
\qquad\qquad \forall (\bm{x}, \bm{u}), (\bm{y}, \bm{v}) \in  \text{gra}\,\mathcal{A}.
\end{equation}
and  \textit{firmly nonexpansive} if
\begin{equation} \label{f_non}
\Vert \bm{u} - \bm{v} \Vert^2 \leq \langle \bm{x} - \bm{y}, \bm{u} - \bm{v} \rangle,
\quad\quad \forall (\bm{x}, \bm{u}), (\bm{y}, \bm{v}) \in  \text{gra}\,\mathcal{A}.
\end{equation}
Clearly, by Cauchy–Schwarz inequality, \textit{firmly nonexpansiveness}  implies \textit{nonexpansiveness}.

\subsection{Resolvent }\label{sec_res}
Let  $ \mathcal{A}: \mathscr H \rightarrow 2^\mathscr H$ be a set-valued operator, the \textit{resolvent} and  \textit{reflected resolvent} of $ \mathcal{A} $ are 
\begin{equation}
J_\mathcal{A} = ( \mathcal{I} + \mathcal{A})^{-1}, \qquad\qquad R_\mathcal{A} = 2J_\mathcal{A} - \mathcal{I},
\end{equation}
respectively, where $ \mathcal{I} $ denotes the identity operator. 
Some useful relations  are
\begin{align}\label{rel00}
J_\mathcal{A}\,\,   \text{is firmly nonexpansive} \quad 
& \iff \quad R_\mathcal{A}\,\, \text{is  nonexpansive}  \nonumber\\
& \iff \quad  \mathcal{I} - J_\mathcal{A}\,\,   \text{is firmly nonexpansive}  \nonumber\\
& \iff \quad  J_\mathcal{A}\,\,   \text{is 1/2-averaged},
\end{align}  
see \cite[Corollary 23.11]{bauschke2017convex}.


Suppose the operator $ \mathcal{A}$ is maximal monotone. Then, 
\begin{equation}\label{re_id}
J_\mathcal{A} + J_{\mathcal{A}^{-1}} = \mathcal{I}.
\end{equation}
For  $ f \in \Gamma_0 (\mathscr H)$, $  \partial f $  is  maximal monotone. Then, 
\begin{equation}\label{equ_prox}
J_{\partial f} = \textbf{Prox}_{f},  \qquad  J_{(\partial f)^{-1}} = J_{\partial f^{*}} =  \textbf{Prox}_{f^*} ,
\end{equation}
where $ f^*(\cdot) \eqdef \sup\, \langle \bm{z}, \cdot \rangle - f(\bm{z})$ is the Fenchel conjugate function. It follows that
\begin{equation}
\textbf{Prox}_{f} + \textbf{Prox}_{f^*} =  \mathcal{I},
\end{equation}
which is known as the Moreau identity.

\subsection{Parametrization}\label{sec_pa}

Let  $ \mathcal{A}: \mathscr H \rightarrow 2^\mathscr H$ be a set-valued operator, the resolvent of $ \mathcal{A} $ can be extended to include a positive parameter $\gamma\in\, ]0, +\infty]  $  and  to a  metric space environment  $  \mathscr {H}_{\mathcal{M}}$.
\begin{equation}
J_{\frac{1}{\gamma} \mathcal{A}} = ( \mathcal{I} + \frac{1}{\gamma} \mathcal{A})^{-1},  \qquad\qquad\,\, J_{\mathcal{M}^{-1} \mathcal{A}} = ( \mathcal{I} + \mathcal{M}^{-1} \mathcal{A})^{-1}.
\end{equation}


Suppose the operator $ \mathcal{A}$ is maximal monotone. Then, $ \gamma\mathcal{A} $ and  $ \mathcal{M}^{-1}\mathcal{A} $ are maximal monotone, see \cite[Lemma 3.7]{combettes2014variable}.
In this case, the following identities hold:
\begin{equation}\label{identities}
J_{\frac{1}{\gamma}\mathcal{A}} + \left(\frac{1}{\gamma}\mathcal{I}\right)\circ J_{\gamma{\mathcal{A}^{-1}}} \circ \bigg(\gamma\mathcal{I}\bigg) = \mathcal{I},
\qquad
J_{\mathcal{M}^{-1}\mathcal{A}} + \mathcal{M}^{-1}\circ J_{\mathcal{M}{\mathcal{A}^{-1}}} \circ \mathcal{M} = \mathcal{I}.
\end{equation}

\section{Equilibrate upper-bound minimization}\label{sec_pre}
In this section, we introduce our \textit{equilibrate upper-bound minimization} technique, where `equilibrate' refers to  an unscaling process  that 
restores a lost symmetry. 
We start with showing  the  failure of a direct attempt,  which  is traced  to an implicit scaling issue.
Then, we demonstrate our technique.  Particularly, we show how to reveal  and then remove the extra scaling.
For the sake of clarity and simplicity, we discuss  the scalar case  and the extension to a metric environment is straightforward by replacing the scalar with a bijective, bounded linear operator.

\subsection{Basic upper-bound minimization}
To start, we show that a direct attempt  would fail.
Consider a fixed-point sequence generated via
\begin{equation}
\bm{\psi}^{k+1} = \mathcal{F}_1{\bm{\psi}^{k}}. 
\end{equation}
Following the rate characterization from \eqref{rate00}, we have
\begin{equation}\label{sub_rate}
\Vert \bm{\psi}^{k+1} - \bm{\psi}^k  \Vert^2 \leq\frac{\theta}{(k+1)(1-\theta)}	\Vert \bm{\psi}^\star  - \bm{\psi}^0 \Vert^2.
\end{equation}
The above  upper bound is minimized by
\begin{equation}\label{pro00}
\underset{ \bm{\psi}^\star \in \text{Fix} \,\mathcal{F}_1}{\text{minimize}}\,\,    \Vert \bm{\psi}^\star  - \bm{\psi}^0 \Vert^2.
\end{equation}
To proceed, we would invoke the specific definition of  the fixed-point $ \bm{\psi}^\star $.
We will demonstrate the failure via the ADMM  fixed-point, which was shown in  \cite{eckstein1989splitting}  in 1989. This well-known version is a dual approach,  obtained
by  applying DRS to the Fenchel dual of problem \eqref{ADMMpro}.
\begin{equation}\label{fix_set}
\text{Fix} \,\,\mathcal{F}_1 \,\, = \{  \bm{\psi}^\star \,\, |\,\,  \bm{\psi}^\star =  \gamma\bm{A}\bm{x}^\star +  \bm{\lambda}^\star, \,\, \gamma > 0\} , 
\end{equation}
where  $ \bm{x}^\star $  and $  \bm{\lambda}^\star $ denote the optimal primal  and  dual solutions, respectively.  

Let us note that the  above fixed-point set is fully characterized by parameter $\gamma$, due to the solutions being  fixed given a convex program.
Then, problem \eqref{pro00} can be  specified  into
\begin{equation}\label{comp0}
\underset{ \gamma > 0 }{\text{minimize}}\,\,    \Vert \gamma\bm{A}\bm{x}^\star +  \bm{\lambda}^\star  - \bm{\psi}^0 \Vert^2.
\end{equation}
The above program is straightforward to  solve, except  the solution  will be problematic.  For example, to guarantee  the positivity constraint, there exist cases with the solution being $ \gamma^\star  \downarrow 0 $, i.e., arbitrarily  close to zero (from above). This is  against  simulation results  and practical experience.
In fact, it is not the optimal choice, in the sense that a  contradiction happens.

\subsubsection{Contradiction}\label{sec_cont}
Here,  we  show the contradiction. First, there exists another fixed-point  characterization, corresponding to the primal problem, see  \cite[sec. 3.1]{ryu2022large}:
\begin{equation}\label{fix_set2}
\text{Fix} \,\mathcal{F}_2 \,\,\, = \{  \bm{\zeta}^\star \,\, |\,\,  \bm{\zeta}^\star =  \bm{A}\bm{x}^\star +  \bm{\lambda}^\star/\gamma \} . 
\end{equation}
We may  also substitute it  to minimizing the  rate upper-bound. 
In this  case,  problem \eqref{pro00} is  specified  into
\begin{equation}\label{comp1}
\underset{ \gamma > 0 }{\text{minimize}}\,\,    \Vert \bm{A}\bm{x}^\star +  \bm{\lambda}^\star/\gamma  - \bm{\zeta}^0 \Vert^2.
\end{equation}
This  program is also easy to solve. However, similar to the previous case, we still encounter the issue $ \gamma^\star  \downarrow 0 $. 

More importantly, problems \eqref{comp1} and \eqref{comp0} admit  different solutions. 
As will be shown later in Sec.  \ref{self_duality}, the primal and dual sequences are parallel if the initializations satisfy  $  \bm{\psi}^{0} =  \gamma \bm{\zeta}^{0} $ (consider  zero-initialization for a quick check). For such  parallel sequences, their optimal convergence rates should be  the same, yet now we obtain different ones. This  implies a contradiction.

%
%
%
%
%
%
%

\subsubsection{Hidden scaling challenge}
Here, we explain why the  contradiction  happens.  Simply put, this is due to the upper-bound may  potentially  contain an extra  $\gamma$-related scaling factor, and we cannot optimize it  unless  it is removed. 

Let us note that the rate bound as in \eqref{rate00} is  scaling invariant. Given any   $\gamma > 0$,
 the following always holds:
\begin{align}\label{sc_ch}
\Vert  \bm{\psi}^{k+1} - \bm{\psi}^k  \Vert^2 \leq \frac{\theta}{(k+1)(1-\theta)}		\Vert \bm{\psi}^\star  - \bm{\psi}^0 \Vert^2
\,\,\iff\,\, \gamma \Vert  \bm{\psi}^{k+1} - \bm{\psi}^k  \Vert^2 \leq \frac{\theta}{(k+1)(1-\theta)}	\cdot \gamma	\Vert \bm{\psi}^\star  - \bm{\psi}^0 \Vert^2.
\end{align}
Clearly,  minimizing the above two upper-bounds  w.r.t. $\gamma$  will  be different.

\subsection{Solution in the literature}
Before providing our solution, we  briefly review a typical success in  the literature, which   avoids the above implicit scaling issue.
Indeed, a natural way to address 
such a scaling challenge is by division. 

Consider minimizing the following ratio:
\begin{equation}\label{fac}
\delta
= \underset{\{k | \bm{\psi}^{k+1}\neq \bm{\psi}^{k}\}}{\sup} \, \frac{\Vert \bm{\psi}^{k+1} - \bm{\psi}^\star \Vert}{\Vert \bm{\psi}^{k} - \bm{\psi}^\star \Vert}.
\end{equation}
Clearly, a positive  scaling   on both the numerator and denominator   does not change the factor $ \delta $.
This approach is proposed in \cite{ghadimi2014optimal} for quadratic programming, and  \cite{shi2014linear} for  decentralized consensus problems. 
Despite the  aforementioned successes,  there are two main drawbacks  to such a  technique:  

\vspace{5pt}

$ \bullet $ (i) First,  the factor $ \delta $ needs to be the same at every iteration $ k $, otherwise  optimizing it only improves a  few middle  steps, and  the whole convergence  rate does not  necessarily  improve.  This requirement
is satisfied under a linear rate assumption, which does not  hold in general. Theoretically, it can be achieved by imposing certain strong assumptions. However,  even  if a linear rate naturally exists, an optimization problem in such a division form is not easy to solve. Indeed, their procedures including  the  final results are very complicated.

\vspace{5pt}
$ \bullet $ (ii) More importantly, the successes in \cite{ghadimi2014optimal, shi2014linear}  require analytical tractability, i.e., the fixed-point operator $ \mathcal{F} $ (or equivalently all ADMM iterates) admits an explicit analytical form.  
This is a very strong requirement, and only available for a very limited class of problems. In view of this, it appears that there is no  hope to obtain a general optimal parameter  following this path.

\subsection{Equilibrate upper-bound}
Above, we observe some strong and seemingly unresolvable limitations from the existing  approach.
This motivates us to reconsider   upper-bound minimization. It is most promising  since it is associated  with a  much  simpler optimization problem  and  does not require any  strong assumption. Moreover, owing to Lemma \ref{lem_rate}, the obtained  result will be  generally applicable to any (converging) fixed-point  algorithm.

Again, the key challenge is  the potential extra  scaling, as indicated in  \eqref{sc_ch}.
To this end, we ask the following question:
\begin{center}
	\emph{For such  potentially scaled  upper bound \eqref{sc_ch}, can we find an unscaled one?}
\end{center}
It  turns out indeed possible.

\subsubsection{Parallel fixed-point sequences}
First, we show that the fixed-point operator itself contains  scaling variants, with the  same converging  property. That  said, there  exist parallel  fixed-point sequences.


To see  this, let us  note that 
\begin{align}
\,\,    \bm{y}^{k+1} = \mathcal{F}{\bm{y}^{k}}
\iff 
&\,\,    \bm{y}^{k+1} = \alpha^{-1}\mathcal{I}\circ \bigg(\alpha\mathcal{I}\circ  \mathcal{F}\circ \alpha^{-1}\mathcal{I}\bigg)  \circ  \alpha\mathcal{I}  \,\, \bm{y}^{k}, \qquad \alpha \neq 0 \nonumber\\
\iff 
&\,\,    \widetilde{\bm{y}}^{k+1} = \widetilde{\mathcal{F}}  \widetilde{\bm{y}}^{k},
\end{align}
with  substitutions:
\begin{equation}\label{def0}
 \widetilde{\bm{y}}^k \eqdef \alpha \bm{y}^k, \qquad \widetilde{\mathcal{F}} \eqdef \alpha\mathcal{I}\circ  \mathcal{F}\circ \alpha^{-1}\mathcal{I}.
\end{equation}

\begin{lem}
Suppose operator $ \mathcal{F} $  is  (firmly)  nonexpansive, then $ \widetilde{\mathcal{F}} \eqdef \alpha\mathcal{I}\circ  \mathcal{F}\circ \alpha^{-1}\mathcal{I} $ with $\alpha\neq  0$ is  also  (firmly)  nonexpansive.
\end{lem}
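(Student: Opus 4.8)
The plan is to verify the two operator inequalities directly, using the definition of the graph of the conjugated operator $\widetilde{\mathcal{F}} = \alpha\mathcal{I}\circ\mathcal{F}\circ\alpha^{-1}\mathcal{I}$. First I would unwind what the graph of $\widetilde{\mathcal{F}}$ looks like: a pair $(\bm{x},\bm{u})$ lies in $\operatorname{gra}\widetilde{\mathcal{F}}$ precisely when $\bm{u} = \alpha\,\mathcal{F}(\alpha^{-1}\bm{x})$, i.e. when $(\alpha^{-1}\bm{x},\alpha^{-1}\bm{u})\in\operatorname{gra}\mathcal{F}$. So given two pairs $(\bm{x},\bm{u}),(\bm{y},\bm{v})\in\operatorname{gra}\widetilde{\mathcal{F}}$, I set $\bm{x}' = \alpha^{-1}\bm{x}$, $\bm{u}' = \alpha^{-1}\bm{u}$, $\bm{y}' = \alpha^{-1}\bm{y}$, $\bm{v}' = \alpha^{-1}\bm{v}$, which are the corresponding pairs in $\operatorname{gra}\mathcal{F}$.

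Next I would simply substitute. For nonexpansiveness, $\Vert\bm{u}-\bm{v}\Vert^2 = \alpha^2\Vert\bm{u}'-\bm{v}'\Vert^2 \le \alpha^2\Vert\bm{x}'-\bm{y}'\Vert^2 = \Vert\bm{x}-\bm{y}\Vert^2$, where the middle inequality is nonexpansiveness of $\mathcal{F}$ applied to $(\bm{x}',\bm{u}'),(\bm{y}',\bm{v}')$, and $\alpha^2>0$ lets us multiply through without flipping the inequality. For firm nonexpansiveness the computation is the same with the inner product in place of one of the norms: $\Vert\bm{u}-\bm{v}\Vert^2 = \alpha^2\Vert\bm{u}'-\bm{v}'\Vert^2 \le \alpha^2\langle\bm{x}'-\bm{y}',\bm{u}'-\bm{v}'\rangle = \langle\bm{x}-\bm{y},\bm{u}-\bm{v}\rangle$, again using $\alpha^2>0$ and bilinearity of $\langle\cdot,\cdot\rangle$. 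Since the argument is identical in both cases up to replacing one factor of $\Vert\cdot\Vert$ by the inner product, I would present it once with a parenthetical remark, matching the ``(firmly)'' phrasing of the statement.

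There is essentially no hard part here — the only thing to be careful about is that $\alpha$ may be negative, so I must use $\alpha^2$ (never $\alpha$ alone) as the scaling constant, and I should note explicitly that $\alpha\neq 0$ guarantees $\alpha^{-1}\mathcal{I}$ is well-defined so that the conjugation makes sense and $\operatorname{gra}\widetilde{\mathcal{F}}$ is in bijective correspondence with $\operatorname{gra}\mathcal{F}$. One could alternatively phrase this abstractly by observing that $\alpha\mathcal{I}$ is a similarity (a scalar multiple of an orthogonal map), under which both the nonexpansive and firmly-nonexpansive conditions are invariant, but the direct substitution is shorter and self-contained. I would also remark in passing that this lemma immediately yields the parallel-sequence claim from the surrounding text: if $\bm{y}^k\to\bm{y}^\star$ under $\mathcal{F}$, then $\widetilde{\bm{y}}^k = \alpha\bm{y}^k\to\alpha\bm{y}^\star$ under $\widetilde{\mathcal{F}}$, so the two orbits differ only by the fixed rescaling $\alpha\mathcal{I}$.
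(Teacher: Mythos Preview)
Your proposal is correct and essentially identical to the paper's proof: both verify the (firm) nonexpansiveness inequality directly by multiplying through by $\alpha^2>0$ and making the substitution $\widetilde{\bm{x}}=\alpha\bm{x}$ (equivalently your $\bm{x}'=\alpha^{-1}\bm{x}$). The only cosmetic difference is that the paper writes the argument as a chain of $\iff$'s starting from the inequality for $\mathcal{F}$, while you start from $\operatorname{gra}\widetilde{\mathcal{F}}$ and substitute back; the content is the same.
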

\begin{proof}
To see this, recall   the firm nonexpansiveness definition from \eqref{f_non}, we obtain
\begin{align} 
&\quad \Vert \mathcal{F}  \bm{x} -  \mathcal{F}\bm{y}\Vert^2 \leq \quad\,\,\langle \bm{x} - \bm{y}, \mathcal{F}  \bm{x} - \mathcal{F} \bm{y} \rangle, \nonumber\\
\iff
&\alpha^2 \Vert \mathcal{F}  \bm{x} -  \mathcal{F} \bm{y} \Vert^2 \leq \alpha^2\langle \bm{x} - \bm{y}, \mathcal{F}  \bm{x} - \mathcal{F} \bm{y} \rangle ,  \quad \alpha \neq 0 \nonumber\\
\iff
&\quad \Vert \widetilde{\mathcal{F}}  \widetilde{\bm{x}} -  \widetilde{\mathcal{F}}\widetilde{\bm{y}}\Vert^2 \leq \quad\,\langle \widetilde{\bm{x}} - \widetilde{\bm{y}}, \widetilde{\mathcal{F}}  \widetilde{\bm{x}} - \widetilde{\mathcal{F}} \widetilde{\bm{y}} \rangle,
\end{align}
with substitutions:
\begin{equation}
\widetilde{\bm{x}} \eqdef \alpha \bm{x}, \qquad\widetilde{\bm{y}} \eqdef \alpha \bm{y}, \qquad \widetilde{\mathcal{F}} \eqdef \alpha\mathcal{I}\circ  \mathcal{F}\circ \alpha^{-1}\mathcal{I}.
\end{equation}
The nonexpansive  operator  case shares similar arguments and  is omitted to  avoid repeating. The proof is now concluded.
\end{proof}

Following the  above lemma, we obtain  parallel converging sequences  as
\begin{equation}
 \, \bm{y}^{k} \rightarrow \bm{y}^\star \in \text{Fix}\, \mathcal{F} 
\,\,\iff\,\,
\widetilde{\bm{y}}^k \rightarrow \widetilde{\bm{y}}^\star \in \text{Fix}\, \widetilde{\mathcal{F}} .
\end{equation}
We can  characterize the class of such parallel fixed-point operator as
\begin{equation}\label{sym_set}
\{ \mathcal{T}\,\,|\,\,   \mathcal{T} = \alpha\mathcal{I}\circ  \mathcal{F}\circ \alpha^{-1}\mathcal{I},\,\,\, \alpha \neq 0\}.
\end{equation}
The insight is that  the above $\alpha$-parametrization is not symmetric. Then, if we take symmetric type of operations on $ \mathcal{T} $, it should be possible to reveal $\alpha$. Then, we can remove such an extra scaling.
This is elaborated next.

\subsubsection{Reveal the extra scaling}\label{sec_cri}
Following from the previous section, the key issue  is
\begin{center}
	\emph{How to find an unscaled operator, or how do we check and remove an extra scaling?}
\end{center}
In  view  of  the set  \eqref{sym_set}, the answer is by appealing  to  symmetry. Informally speaking, if the operator  is intrinsically symmetric, then
after a `reflection', only the unscaled one does not change. Such a `reflection' purpose can be achieved  by taking  inverse or adjoint.

Specifically,
consider a certain fixed-point operator  $ \mathcal{F}_c $. We have the following characterizations:
\begin{equation}\label{char}
\mathcal{F}_c = \alpha\mathcal{I}\circ  \mathcal{F}_0 \circ \alpha^{-1}\mathcal{I}, 
\qquad 
\mathcal{F}_c^{-1} = \alpha^{-1}\mathcal{I}\circ  \mathcal{F}_0^{-1}  \circ \alpha\mathcal{I},
\qquad 
\mathcal{F}_c^{*} = \alpha^{-1}\mathcal{I}\circ  \mathcal{F}_0^{*}  \circ \alpha\mathcal{I},
\end{equation}
where $ \mathcal{F}_0 $ denotes the underlying unscaled fixed-point operator. 
Below, we discuss the adjoint  operator $ \mathcal{F}_c^{*} $ case, similar arguments from below will hold straightforwardly for  the inverse one $ \mathcal{F}_c^{-1} $.


Consider  specific fixed-point iterates,
\begin{equation}
\bm{\zeta}_1^{k+1} = \mathcal{F}_c\bm{\zeta}_1^{k} =  \bigg( \alpha\mathcal{I}\circ  \mathcal{F}_0 \circ \alpha^{-1}\mathcal{I}\bigg) \bm{\zeta}_1^{k} ,
\qquad \bm{\zeta}_2^{k+1} = \mathcal{F}_c^* \bm{\zeta}_2^{k}  = \bigg(\alpha^{-1}\mathcal{I}\circ  \mathcal{F}_0^* \circ \alpha\mathcal{I}\bigg) \bm{\zeta}_2^{k},
\end{equation}
which gives
\begin{equation}\label{fix_rel}
\alpha^{-1}\bm{\zeta}_1^{k+1} =   \mathcal{F}_0\, \big(\alpha^{-1}\bm{\zeta}_1^{k}\big),
\qquad 
\alpha\bm{\zeta}_2^{k+1} =   \mathcal{F}_0^* \, \big(\alpha\bm{\zeta}_2^{k}\big).
\end{equation}

Now, suppose the following hold:
\begin{equation}\label{assum}
 \mathcal{F}_0 = \mathcal{F}_0^*, \qquad\quad
\alpha^{-1}\bm{\zeta}_1^{0} = \alpha\bm{\zeta}_2^{0},  \tag{assum.}
\end{equation}
Then, following from \eqref{fix_rel}, we have 
\begin{equation}\label{measure}
\alpha^{-1}\bm{\zeta}_1^{k} = \alpha\bm{\zeta}_2^{k}, \quad  \forall  k = 1, 2, \dots. \tag{criterion},
\end{equation}
which implies a fixed-point set relation
\begin{equation}\label{rel01}
\alpha^{-1}\,  \text{Fix} \,\mathcal{F}_c    \,\,=\,\, \alpha\,  \text{Fix} \,\mathcal{F}_c^*.
\end{equation}
Clearly, by simply comparing the fixed-points, we would  know exactly the value of $\alpha$.
Particularly, when  \eqref{measure}  holds with $\alpha =  1$, then 
\begin{equation}
\bm{\zeta}_1^\star  = \bm{\zeta}_2^\star \,\,\in\,\, \text{Fix} \,\mathcal{F}_0    \,\,=\,\,   \text{Fix} \,\mathcal{F}_0^*,
\end{equation}
are unscaled fixed-points.

\begin{remark}
In view of our assumption   \eqref{assum},  the first one states an intrinsic structure  relation, and  can be checked  by removing  all parametrization.  
For the  second  relation  in \eqref{assum}, it  is always achievable due to the initializations can be arbitrarily chosen, recall Lemma \ref{lem_rate}.
\end{remark}


%

\subsubsection{Contradiction revisit}
With all results established,  we may revisit the contradiction in Sec. \ref{sec_cont}.
 We observe that the ADMM primal and dual fixed-point sets as in \eqref{fix_set}, \eqref{fix_set2} are related via
\begin{equation}
\frac{1}{\sqrt{\gamma}}\text{Fix} \,\mathcal{F}_\text{1} = \sqrt{\gamma}\,\,\text{Fix} \,\mathcal{F}_\text{2}.
\end{equation}
This  would correspond to \eqref{rel01}. Also, the ADMM fixed-point operator is symmetric, a property known as self-duality (to be introduced next).  That is, the extra scaling factor $\alpha$  corresponds to $\sqrt{\gamma}$, and we can remove it to obtain the unscaled fixed-point sets, as
\begin{align}
\frac{1}{\sqrt{\gamma}}\,\,\text{Fix} \,\mathcal{F}_\text{1}
=\sqrt{\gamma}\,\,\text{Fix} \,\mathcal{F}_\text{2} 
= \{  \bm{y}^\star \, |\,  \bm{y}^\star =  {\sqrt{\gamma}}\bm{A}\bm{x}^\star +  \bm{\lambda}^\star/\sqrt{\gamma} \}.
\end{align}  
Using the above unscaled fixed-point, the previous contradiction is  clearly addressed, since the fixed-points from the primal and dual problems are now the same.

Above, we have shown how to reveal and remove the extra scaling.  Then, a natural question is what causes this phenomenon. Also, is it possible to directly obtain the unscaled fixed-point rather than the above a little bit complicated procedure? We will show that the classical  parametrization itself is the cause. These issues can be naturally avoided using our \textit{equilibrate parametrization}.

\section{Classical parametrization: implicit scaling}\label{sec_class}
In the previous section, we present our \textit{equilibrate upper-bound minimization} technique. We propose to examine and  potentially remove an extra scaling   by a `reflection' operation. 
Here, we will see that the Fenchel dual or monotone inverse is exactly the `reflection' we need. 
Also,  some similar  expressions from the previous section naturally arise here, implying the existence of an extra scaling.

%

\subsection{Duality}\label{duality}
Consider the following convex program  (primal problem):
\begin{equation}
\underset{{\bm x}}{\text{minimize}}\quad f (\bm{x})  + g (\bm{x}) ,
\end{equation}
where $ f, g \in \Gamma_0 (\mathscr H) $. Its Fenchel dual problem can be written as
\begin{equation}
\underset{{\bm \lambda}}{\text{minimize}} \quad f^* (-\bm{\lambda})  + g^* (\bm{\lambda}) ,
\end{equation}
where   $ f^*(\cdot) \eqdef \sup\, \langle \bm{z}, \cdot \rangle - f(\bm{z})$ is the Fenchel conjugate function. 
The above can be equivalently written as  monotone  inclusion problems:
\begin{align}
& \text{find} \,\,\, \bm{x}\in \mathscr H \quad \text{such that}\,\, 0 \in \mathcal{A}\bm{x} + \mathcal{B}\bm{x}\\
& \text{find} \,\, \bm{\lambda}\in \mathscr H \quad\,\text{such that}\,\, 0 \in {\mathcal{A'}}^{-1}\bm{\lambda} + \mathcal{B}^{-1}\bm{\lambda},
\end{align}
with  $ \mathcal{A} =  \partial f$ and $ \mathcal{B} =  \partial g$  being maximal monotone, where
\begin{equation}\label{at_dual}
{\mathcal{A'}}^{-1} =  \bigg(\mathcal{A}\circ (-\mathcal{I})\bigg)^{-1} =  (-\mathcal{I}) \circ \mathcal{A}^{-1}\circ (-\mathcal{I}),
\end{equation}
is   referred to as the \textit{Attouch–Th\'era} duality  \cite{attouch1996general}.

\subsection{Self-duality}\label{self_d}
Here, we discuss a  self-duality property.  If it holds, then the iteration-number complexity for solving the primal and dual problems would be the same. It is an intrinsic property for PRS and DRS/ADMM  algorithms.

Without parametrizations, we have the following characterizations:
\begin{align}
&\qquad(\text{PRS})\qquad \quad		\mathcal{F}_\text{pri} = R_\mathcal{A}R_\mathcal{B}, \qquad \qquad\qquad \mathcal{F}_\text{dual} = R_{\mathcal{A'}^{-1}}R_{\mathcal{B}^{-1}},  \nonumber\\
&(\text{DRS/ADMM})\quad  	\mathcal{F}_\text{pri} = \frac{1}{2}\mathcal{I} + \frac{1}{2} R_\mathcal{A}R_\mathcal{B}, 
\qquad\,\,\,\, \mathcal{F}_\text{dual} = \frac{1}{2}\mathcal{I} + \frac{1}{2} R_{\mathcal{A'}^{-1}}R_{\mathcal{B}^{-1}}.
\end{align}
The following relation is referred to as the \textit{self-duality}, see  \cite[Corollary 4.2, 4.3]{bauschke2012attouch},  \cite[Sec. 9.3]{ryu2022large}:
\begin{equation}\label{self}
\mathcal{F}_\text{pri} =  \mathcal{F}_\text{dual},
\end{equation}

The dual  is the `reflection'  operation we need, as discussed in the previous section.  Specifically, the first relation in \eqref{assum} now holds. Next, we will see an extra scaling arises due  to the classical way of parametrization.


%

\subsubsection{Loss  \& recovery}\label{self_duality}
Under classical parametrization,  self-duality \eqref{self} no longer holds.  The following operator relation \eqref{oper}  is well-known, traced back to \cite[Lemma 3.5 on p.125, Lemma 3.6 on p.133]{eckstein1989splitting} in  1989, but our interpretation appears new. 
\begin{align}\label{oper}
R_{\frac{1}{\gamma}\mathcal{A}} R_{\frac{1}{\gamma}\mathcal{B}} &= \bigg(\frac{1}{\gamma}\mathcal{I}\bigg)\circ R_{\gamma\mathcal{A'}^{-1}}R_{\gamma\mathcal{B}^{-1}} \circ \bigg({\gamma}\mathcal{I}\bigg) \nonumber\\
\iff \,\, 
 \frac{1}{2}\mathcal{I} + \frac{1}{2} R_{\frac{1}{\gamma}\mathcal{A}} R_{\frac{1}{\gamma}\mathcal{B}} &= \bigg(\frac{1}{\gamma}\mathcal{I}\bigg)\circ \bigg(\frac{1}{2}\mathcal{I} + \frac{1}{2} R_{\gamma\mathcal{A'}^{-1}}R_{\gamma\mathcal{B}^{-1}}\bigg) \circ \bigg({\gamma}\mathcal{I}\bigg) ,
\end{align}

We can  abstractly describe the above as
\begin{equation}
\mathcal{F}_{\frac{1}{\gamma}\text{pri}} 
= \bigg(\frac{1}{\gamma}\mathcal{I}\bigg) \circ \mathcal{F}_{\gamma\text{dual}} \circ \bigg(\gamma\mathcal{I} \bigg).
\end{equation}
Denote their iterates as
\begin{equation}\label{fix_itr}
\bm{\zeta}^{k+1} = \mathcal{F}_{\frac{1}{\gamma}\text{pri}}  \bm{\zeta}^{k}, \qquad \bm{\psi}^{k+1} = \mathcal{F}_{\gamma\text{dual}} \bm{\psi}^{k}.
\end{equation}
Then,
\begin{align}
\bm{\zeta}^{k+1} = \mathcal{F}_{\frac{1}{\gamma}\text{pri}}  \bm{\zeta}^{k} 
= \bigg(\frac{1}{\gamma}\mathcal{I}\bigg) \circ\mathcal{F}_{\gamma\text{dual}}\circ \bigg(\gamma\mathcal{I} \bigg) \bm{\zeta}^{k}
\iff
\bigg(\gamma \bm{\zeta}^{k+1}\bigg) 
= \mathcal{F}_{\gamma\text{dual}}\circ \bigg(\gamma \bm{\zeta}^{k}\bigg) .
\end{align}
Comparing it  to \eqref{fix_itr}, we obtain
\begin{equation}\label{com}
\frac{1}{\sqrt{\gamma}}\bm{\psi}^{k}  =\sqrt{\gamma} \bm{\zeta}^{k} , \,\, \forall k. 
\end{equation}
This  implies that if we  choose initializations $ \bm{\psi}^0 =  \gamma\bm{\zeta}^0 $, then \eqref{assum} holds and the two sequences  $ \{ \bm{\psi}^k \} $ and  $ \{ \bm{\zeta}^k \} $ are parallel, indicating the same convergence trajectory.
   Compare the above \eqref{com} to \eqref{measure}, we see that there exists an extra scaling  factor $ \sqrt{\gamma} $. 
   
We may remove the extra  scaling to restore self-duality. This leads to the following  unscaled fixed-point operator:
\begin{equation}\label{uns}
\mathcal{F}_0  
= \bigg(\sqrt{\gamma}\,\mathcal{I} \bigg) \circ  \mathcal{F}_{\frac{1}{\gamma}\text{pri}} \circ \bigg(\frac{1}{\sqrt{\gamma}}\,\mathcal{I}\bigg) 
= \bigg(\frac{1}{\sqrt{\gamma}}\, \mathcal{I}\bigg) \circ \mathcal{F}_{\gamma\text{dual}} \circ \bigg(\sqrt{\gamma}\,\mathcal{I} \bigg).
\end{equation}
The above unscaled form  will naturally arise in the next section using  the new parametrization.

\section{Equilibrate parametrization}\label{sec_equ}
In the previous section, we observe that the extra scaling issue is intrinsically connected with the classical way of   parametrization. Here, we will show some additional drawbacks of the classical way, and demonstrate the mathematical superiority
of  a  new  \textit{equilibrate parametrization}  technique.

\subsection{Motivations}\label{motives}
While the  extra scaling  issue from the previous section  can  be  circumvented via \eqref{uns}, there  are some other issues that are seemingly not resolvable. This  aspect shows the necessity to  consider a new parametrization.
Below,  through a fundamental tool,  the proximal operator \cite{proxi_algs}, we show that the classical parametrization technique leads to inconsistent definitions and unnecessary complications.

\vspace{8pt}

$\bullet$ (i) The  proximal operator  equipped with a classical scalar   is not perfectly defined. To see this, recall its definition as
\begin{defi}\label{prox}
	Given $ f \in \Gamma_0 (\mathscr H) $ and parameter $\gamma > 0$, the  proximal operator can be defined as
	\begin{equation}\label{class_prox}
	\textbf{Prox}_{\frac{1}{\gamma} f}(\bm{v}) \eqdef  \underset{\bm{z}}{\text{argmin}} \,\,  \frac{1}{\gamma} f(\bm{z}) + \frac{1}{2}\Vert \bm{z} - \bm{v}\Vert^2
	= \underset{\bm{z}}{\text{argmin}} \,\,   f(\bm{z}) + \frac{\gamma}{2}\Vert \bm{z} - \bm{v}\Vert^2,
	\end{equation}
\end{defi}	
The above can be rewritten   into 
\begin{equation}
\textbf{Prox}_{\frac{1}{\gamma} f}(\bm{v}) 
= \underset{\bm{z}}{\text{argmin}} \,\,   f(\bm{z}) + \frac{1}{2} \left\Vert \sqrt{\gamma}\bm{z} - \sqrt{\gamma}{\bm{v}}  \right\Vert^2.
\end{equation}
We see that the input is defined as $ \bm{v} $, while we are actually handling  $ \sqrt{\gamma}\bm{v} $, implying a definition  inconsistency.

\vspace{8pt}

$\bullet$ (ii) Another inconsistency arises in  a metric space environment $  \mathscr {H}_{\mathcal M} $.  We see that the generalized definition  does not nicely  subsume  the previous  scalar case.
\begin{defi}\label{cla_prox}
	Given $ f \in \Gamma_0 (\mathscr H) $ and a metric space environment $  \mathscr {H}_{\mathcal M} $, the proximal operator is defined as
	\begin{equation}
	\textbf{Prox}_{f}^{\mathcal{M}^{-1} } (\cdot) \eqdef  \underset{\bm{z}}{\text{argmin}} \,\, f(\bm{z}) + \frac{1}{2}\Vert \bm{z} - \cdot\Vert^2_{\mathcal{M}},
	\end{equation} 
	where $ \Vert \cdot\Vert_{\mathcal{M}} $ is  induced by
	the inner product $\langle \cdot\,,\,\cdot \rangle_\mathcal{M}  =  \langle \cdot\,,\mathcal{M} \,\cdot \rangle$.
\end{defi}
Unlike the scalar case  with two definitions, here only   one is available. This  is due to  
the composition `$ \mathcal{M}^{-1}\circ f $' is not well-defined, and hence notation `$ \textbf{Prox}_{\mathcal{M}^{-1}f} $' is typically avoided in the literature.


\vspace{8pt}

$\bullet$ (iii)  The Moreau identity suffers  from  symmetry  loss and additional complications.
To see this,  recall the non-parametrized Moreau identity:
\begin{equation}\label{md}
\bm{v} = \textbf{Prox}_{ f}(\bm{v}) + \textbf{Prox}_{f^*} (\bm{v}),
\end{equation}
Its extensions to  include a positive parameter $\gamma > 0  $  and  to a  metric space environment  $  \mathscr {H}_{\mathcal{M}}$ are
\begin{align}\label{class_more}
\bm{v} = \, \,\,\textbf{Prox}_{\frac{1}{\gamma} f}(\bm{v}) \,\,+\,\, \frac{1}{\gamma}\textbf{Prox}_{{\gamma}f^*} ({\gamma}\bm{v}),  \quad
\bm{v} = \, \textbf{Prox}_f^{\mathcal{M}^{-1}}(\bm{v}) + \mathcal{M}^{-1}\textbf{Prox}_{f^*}^\mathcal{M} (\mathcal{M}\bm{v}).
\end{align}
Comparing such extensions to the non-parametrized one \eqref{md}, we see that  symmetry is lost between the components and  the  expression is significantly complicated.

\subsection{Equilibrate proximal operator}
Following the above motivations, we present the  \textit{equilibrate parametrization} in the proximal operator case. It admits  a simple form and a nice interpretation. It  is achieved by associating the parameter to the domain of a function, rather than conventionally the range.

\begin{prop}\label{new_def1}[scalar]
	Given $ f \in \Gamma_0 (\mathscr H) $ and a scalar parameter $\rho \neq 0$, we propose the following \textit{equilibrate proximal operator}:
	\begin{equation}\label{def_right}
	\textbf{Prox}_{f \frac{1}\rho} (\bm{v}) \eqdef \underset{\bm{z}_1}{\text{argmin}} \,\, f\left(\frac{1}\rho \,\bm{z}_1 \right)+ \frac{1}{2}\Vert \bm{z}_1 - \bm{v}\Vert^2
	=\rho\,\underset{\bm{z}_2}{\text{argmin}} \,\, f(\bm{z}_2)+ \frac{1}{2}\Vert \rho{\bm{z}_2 }- \bm{v}\Vert^2.
	\end{equation}
\end{prop}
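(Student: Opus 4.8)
\emph{Proof proposal.} What actually requires proof in Proposition~\ref{new_def1} is the second equality in \eqref{def_right}: that the point minimizing $\bm{z}_1 \mapsto f(\bm{z}_1/\rho) + \tfrac12\|\bm{z}_1 - \bm{v}\|^2$ equals $\rho$ times the point minimizing $\bm{z}_2 \mapsto f(\bm{z}_2) + \tfrac12\|\rho\bm{z}_2 - \bm{v}\|^2$, together with the (implicit) claim that both \text{argmin}'s are single points so the notation is meaningful. The plan is a one-line change of variables preceded by a short well-definedness remark; there is essentially no obstacle, only a couple of points deserving care.

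First I would check that each \text{argmin} is a singleton. Since $\rho \neq 0$, the linear map $\bm{z}_1 \mapsto \bm{z}_1/\rho$ is a bijection of $\mathscr H$, so $f(\cdot/\rho) \in \Gamma_0(\mathscr H)$ whenever $f \in \Gamma_0(\mathscr H)$; adding the $1$-strongly convex, coercive term $\tfrac12\|\cdot - \bm{v}\|^2$ produces a CCP, $1$-strongly convex, coercive objective, hence a unique minimizer. The same reasoning applies to the second objective because $\bm{z}_2 \mapsto \rho\bm{z}_2 - \bm{v}$ is affine with invertible linear part. Equivalently, $\textbf{Prox}_{f\frac1\rho}$ is by construction the resolvent $J_{\partial(f\circ\frac1\rho\mathcal I)}$, which is single-valued because $\partial(f\circ\frac1\rho\mathcal I)$ is maximal monotone.

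Next I would substitute $\bm{z}_1 = \rho\bm{z}_2$, which is a bijection of $\mathscr H$ onto itself. Under it the two objectives agree term by term: $f(\bm{z}_1/\rho) = f(\bm{z}_2)$ and $\|\bm{z}_1 - \bm{v}\|^2 = \|\rho\bm{z}_2 - \bm{v}\|^2$. Hence $\bm{z}_1^\star$ minimizes the first objective if and only if $\bm{z}_1^\star/\rho$ minimizes the second, i.e.\ $\bm{z}_1^\star = \rho\bm{z}_2^\star$, which is precisely the claimed identity. If a Fermat-rule phrasing is preferred, one writes $0 \in \tfrac1\rho\partial f(\bm{z}_1/\rho) + \bm{z}_1 - \bm{v}$ versus $0 \in \partial f(\bm{z}_2) + \rho(\rho\bm{z}_2 - \bm{v})$ and multiplies the first inclusion by $\rho$ with $\bm{z}_2 := \bm{z}_1/\rho$.

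The closest thing to a ``hard part'' is simply being careful that $\rho$ is allowed to be negative, so in both steps above one must invoke only \emph{invertibility}, not positivity, of the rescaling maps; and that the left-hand notation $\textbf{Prox}_{f\frac1\rho}$ is to be read as the proximal map of the composition $f\circ(\tfrac1\rho\mathcal I)$, consistent with the \textbf{Prox}/resolvent conventions of Section~\ref{sec_res}. I would close by noting the conceptual payoff already flagged in Section~\ref{motives}: in contrast to Definition~\ref{prox}, the parameter here acts on the \emph{domain} of $f$, so the argument of the proximal operator is the genuine input $\bm v$ with no spurious $\sqrt\gamma$-rescaling, which is exactly the symmetry the ``equilibrate'' parametrization is meant to restore.
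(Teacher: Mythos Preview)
Your proposal is correct and takes essentially the same approach as the paper: the paper's proof is precisely the change of variables $\bm{z}_2 \eqdef \bm{z}_1/\rho$, yielding $\bm{z}_2^\star = \bm{z}_1^\star/\rho$. Your additional well-definedness check is not in the paper's proof here (the paper defers single-valuedness to Proposition~\ref{prop_firm2}), but it is a sound inclusion.
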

\begin{proof}
	The second equality above is not  obvious. To show it, let $ \bm{z}_1^\star =  \text{argmin}\,\, f(\bm{z}_1/\rho)$ $+ \frac{1}{2}\Vert \bm{z}_1 - \bm{v}\Vert^2 $. Define variable substitution  $ \bm{z}_2 \eqdef \bm{z}_1/\rho$. Then, 
	$ \bm{z}_2^\star = \bm{z}_1^\star/\rho $ is the minimizer to  function:  $f(\bm{z}_2)+ \frac{1}{2}\Vert  \rho\bm{z}_2 - \bm{v}\Vert^2 $, which concludes the proof. 
\end{proof}

\begin{remark}
Motivation (i)  is addressed, since the input $ \bm{v} $ is no longer affected by parameter.
\end{remark}

Next, we consider the extension  to   a metric  space. It can be  viewed as employing a decomposed metric as below.

\begin{lem}[Euclidean embedding]\label{lem_emb}
Given a metric environment $\mathscr H_\mathcal{M}$, the associated inner product can be translated into the following decomposed form:
\begin{equation}\label{metric_space}
\langle \cdot, \,  \cdot \rangle_\mathcal{M} = \langle \cdot, \,  \mathcal{M}\,\cdot \rangle = \langle \mathcal{S}\,\cdot, \,  \mathcal{S}\,\cdot \rangle ,
\end{equation}
with $ \mathcal S  \in \mathfrak B (\mathscr H) $ being bijective, which always exists due to metric $ \mathcal{M} $ is positive definite.
\end{lem}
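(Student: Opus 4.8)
The plan is to exhibit the operator $\mathcal{S}$ explicitly as a square root of $\mathcal{M}$ and then obtain the two claimed equalities by a single inner-product manipulation. Since $\mathscr{H}$ is the (finite-dimensional) Euclidean space and, by the definition of a metric environment, $\mathcal{M}$ is a positive-definite self-adjoint element of $\mathfrak{B}(\mathscr{H})$, I would first invoke the spectral decomposition $\mathcal{M} = \sum_i \mu_i\, \bm{e}_i \bm{e}_i^{\!*}$ with eigenvalues $\mu_i > 0$, and set $\mathcal{S} \eqdef \mathcal{M}^{1/2} = \sum_i \sqrt{\mu_i}\, \bm{e}_i \bm{e}_i^{\!*}$. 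In matrix language this is simply $\mathcal{S} = \mathcal{M}^{1/2}$; any other factor with $\mathcal{M} = \mathcal{S}^{*}\mathcal{S}$ (for instance a Cholesky factor) works equally well, so the statement does not assert uniqueness of $\mathcal{S}$.

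Second, I would record the three properties of $\mathcal{S}$ that are actually needed: (a) $\mathcal{S}$ is self-adjoint, $\mathcal{S}^{*} = \mathcal{S}$; (b) $\mathcal{S}$ is bijective, since its eigenvalues $\sqrt{\mu_i}$ are strictly positive and hence bounded away from zero, so that $\mathcal{S}^{-1} = \sum_i \mu_i^{-1/2}\, \bm{e}_i \bm{e}_i^{\!*} \in \mathfrak{B}(\mathscr{H})$; and (c) $\mathcal{S}^{*}\mathcal{S} = \mathcal{S}^{2} = \mathcal{M}$. Property (b) is exactly the content of the parenthetical remark that such an $\mathcal{S}$ ``always exists due to metric $\mathcal{M}$ is positive definite'', and it is the only place where positive-definiteness (rather than mere positive-semidefiniteness) is used.

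Third, the decomposition follows at once: for all $\bm{u}, \bm{v} \in \mathscr{H}$,
\[
\langle \bm{u}, \bm{v}\rangle_{\mathcal{M}} = \langle \bm{u}, \mathcal{M}\bm{v}\rangle = \langle \bm{u}, \mathcal{S}^{*}\mathcal{S}\bm{v}\rangle = \langle \mathcal{S}\bm{u}, \mathcal{S}\bm{v}\rangle,
\]
where the middle equality is (c) and the last one is the definition of the adjoint. This yields both claimed identities simultaneously and completes the argument.

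I do not anticipate a genuine obstacle: in the finite-dimensional setting the whole proof reduces to the eigendecomposition of a symmetric positive-definite matrix. The only point deserving a word of care is the assertion $\mathcal{S} \in \mathfrak{B}(\mathscr{H})$ \emph{together with} $\mathcal{S}^{-1} \in \mathfrak{B}(\mathscr{H})$, i.e. bijectivity in the bounded-operator sense; were one to state the lemma in a general real Hilbert space, this would instead be handled by the continuous functional calculus applied to a bounded positive operator whose spectrum is bounded away from $0$, but since the paper fixes $\mathscr{H} = \mathbb{R}^n$ no such machinery is required.
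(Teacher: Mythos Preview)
Your proof is correct and complete. The paper states this lemma without proof, treating the existence of a bijective square root of a positive-definite operator as a standard fact; your explicit construction via the spectral decomposition is exactly the natural justification and fills in what the paper leaves implicit.
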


\begin{prop}\label{new_def2}[metric space]
	Given $ f \in \Gamma_0 (\mathscr H) $ and $ \mathcal S  \in \mathfrak B (\mathscr H) $ being bijective, we define the extended \textit{equilibrate proximal operator} as:
	\begin{equation}\label{def_new}
	\textbf{Prox}_{f\mathcal{S}^{-1}} (\bm{v}) \eqdef \underset{\bm{z}_1}{\text{argmin}} \,\, f\left(\mathcal{S}^{-1}\bm{z}_1\right)+ \frac{1}{2}\Vert \bm{z}_1- \bm{v}\Vert^2
	=  \mathcal{S} \, \underset{\bm{z}_2}{\text{argmin}} \,\, f(\bm{z}_2)+ \frac{1}{2}\Vert \mathcal{S}\bm{z}_2 - \bm{v}\Vert^2.
	\end{equation}
\end{prop}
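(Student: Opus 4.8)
The statement to prove is Proposition \ref{new_def2}, the metric-space extension of the equilibrate proximal operator. The plan is to mimic the scalar proof of Proposition \ref{new_def1} almost verbatim, replacing the nonzero scalar $\rho$ by a bijective bounded linear operator $\mathcal{S}\in\mathfrak{B}(\mathscr H)$, and using bijectivity of $\mathcal{S}$ in place of nonvanishing of $\rho$ to justify the change of variables. The left-hand side by definition is $\textbf{Prox}_{f\mathcal{S}^{-1}}(\bm v) = \underset{\bm z_1}{\text{argmin}}\,\, f(\mathcal{S}^{-1}\bm z_1) + \tfrac12\Vert \bm z_1 - \bm v\Vert^2$, so the entire content of the proposition is the second equality, namely that this minimizer equals $\mathcal{S}\,\bm z_2^\star$ where $\bm z_2^\star = \underset{\bm z_2}{\text{argmin}}\,\, f(\bm z_2) + \tfrac12\Vert \mathcal{S}\bm z_2 - \bm v\Vert^2$.

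First I would perform the substitution $\bm z_1 = \mathcal{S}\bm z_2$. Since $\mathcal{S}$ is bijective on $\mathscr H$, this is a bijection of the optimization domain onto itself, so $\underset{\bm z_1}{\text{argmin}}$ of any functional $\Phi(\bm z_1)$ relates to $\underset{\bm z_2}{\text{argmin}}$ of $\Phi(\mathcal{S}\bm z_2)$ by exactly $\bm z_1^\star = \mathcal{S}\bm z_2^\star$. Applying this with $\Phi(\bm z_1) = f(\mathcal{S}^{-1}\bm z_1) + \tfrac12\Vert \bm z_1 - \bm v\Vert^2$ gives $\Phi(\mathcal{S}\bm z_2) = f(\mathcal{S}^{-1}\mathcal{S}\bm z_2) + \tfrac12\Vert \mathcal{S}\bm z_2 - \bm v\Vert^2 = f(\bm z_2) + \tfrac12\Vert \mathcal{S}\bm z_2 - \bm v\Vert^2$, which is precisely the functional whose argmin defines $\bm z_2^\star$. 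Hence $\bm z_1^\star = \mathcal{S}\bm z_2^\star$, establishing the claimed identity. I would also note, for well-posedness, that $\bm z\mapsto f(\mathcal{S}^{-1}\bm z)$ is in $\Gamma_0(\mathscr H)$ (composition of a CCP function with a bijective bounded linear map preserves convexity, closedness, and properness), so the argmin is a well-defined singleton by strong convexity of the added quadratic term — this guarantees the argmin operators appearing in the statement are genuinely single-valued and the equality is an equality of points rather than of sets.

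For completeness I would connect the new object to the metric-space proximal operator of Definition \ref{cla_prox} via Lemma \ref{lem_emb}: writing $\mathcal{M} = \mathcal{S}^*\mathcal{S}$, one can rephrase $\tfrac12\Vert \mathcal{S}\bm z_2 - \bm v\Vert^2$ after a further substitution and compare the stationarity/optimality conditions, showing that the equilibrate version is the ``symmetrized'' form in which the operator acts on the domain side. This is optional, but it is the natural place to record why motivation (i) and (ii) of Section \ref{motives} are resolved. The only genuine subtlety — and the step I would be most careful about — is the bijectivity argument for the change of variables: one must ensure $\mathcal{S}^{-1}$ is itself a bounded linear operator on $\mathscr H$ (true since $\mathcal{S}\in\mathfrak{B}(\mathscr H)$ is bijective and $\mathscr H$ is finite-dimensional, or by the bounded inverse theorem in general), and that the substitution does not alter which functional is being minimized. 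Everything else is a one-line calculation identical in structure to the proof of Proposition \ref{new_def1}.
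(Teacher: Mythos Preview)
Your proposal is correct and follows exactly the approach the paper takes: the paper only gives an explicit proof for the scalar case (Proposition \ref{new_def1}) via the change of variables $\bm z_2 = \bm z_1/\rho$, and leaves Proposition \ref{new_def2} without a written proof, implicitly relying on the same substitution $\bm z_1 = \mathcal{S}\bm z_2$ with bijectivity of $\mathcal{S}$ replacing nonvanishing of $\rho$. Your added remarks on well-posedness and the bounded inverse are fine but more than the paper itself provides.
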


\begin{remark}
	Motivation (ii)   is addressed, since the above metric form is consistent with the scalar case. 
\end{remark}

\subsubsection{Equilibrate Moreau identity}
Here, we address  motivation (iii).
For  the sake of generality, we prove  the  metric environment case, which subsumes  the scalar one.
\begin{prop}\label{pmoreau}
	Given $ f \in \Gamma_0 (\mathscr H) $ and  $ \mathcal S  \in \mathfrak B (\mathscr H) $ being bijective, the following relation holds:
	\begin{equation}\label{Moreau identity}
\bm{v}  =\,\textbf{Prox}_{f\mathcal{S}^{-1}} (\bm{v}) +  \textbf{Prox}_{f^*{\mathcal{S}^*}} (\bm{v}),
	\end{equation}
\end{prop}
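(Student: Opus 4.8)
The plan is to reduce the claimed identity to the classical (non-parametrized) Moreau identity \eqref{md} by a change of variables that moves the operator $\mathcal{S}$ inside, exactly paralleling the derivation of Proposition \ref{new_def2}. The key observation is that the equilibrate proximal operator of $f$ attached to $\mathcal{S}^{-1}$ and the equilibrate proximal operator of $f^*$ attached to $\mathcal{S}^*$ are the two resolvents arising from one self-adjoint firmly-nonexpansive splitting, so their sum must telescope to the identity.

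First I would unfold both terms using the second (scaled-argmin) expression in \eqref{def_new}. Write $\bm{p} = \textbf{Prox}_{f\mathcal{S}^{-1}}(\bm{v}) = \mathcal{S}\bm{z}_2^\star$ where $\bm{z}_2^\star = \text{argmin}_{\bm{z}} f(\bm{z}) + \frac12 \Vert \mathcal{S}\bm{z} - \bm{v}\Vert^2$. The optimality (Fermat) condition for $\bm{z}_2^\star$ reads $0 \in \partial f(\bm{z}_2^\star) + \mathcal{S}^*(\mathcal{S}\bm{z}_2^\star - \bm{v})$, i.e. $\mathcal{S}^*\bm{v} \in \mathcal{S}^*\mathcal{S}\bm{z}_2^\star + \partial f(\bm{z}_2^\star)$, which says $\bm{z}_2^\star = J_{(\mathcal{S}^*\mathcal{S})^{-1}\partial f}\big((\mathcal{S}^*\mathcal{S})^{-1}\mathcal{S}^*\bm{v}\big)$ in the metric $\mathcal{M} = \mathcal{S}^*\mathcal{S}$. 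Similarly, for the second term write $\bm{q} = \textbf{Prox}_{f^*\mathcal{S}^*}(\bm{v})$; here the parameter attached is $\mathcal{S}^*$, so by Proposition \ref{new_def2} (with $\mathcal{S}$ replaced by $(\mathcal{S}^*)^{-1}$, noting $((\mathcal{S}^*)^{-1})^{-1} = \mathcal{S}^*$) we get $\bm{q} = (\mathcal{S}^*)^{-1}\bm{w}^\star$ with $\bm{w}^\star = \text{argmin}_{\bm{w}} f^*(\bm{w}) + \frac12\Vert (\mathcal{S}^*)^{-1}\bm{w} - \bm{v}\Vert^2$, whose Fermat condition is $0 \in \partial f^*(\bm{w}^\star) + \mathcal{S}^{-1}\big((\mathcal{S}^*)^{-1}\bm{w}^\star - \bm{v}\big)$.

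The decisive algebraic step is then to verify $\bm{p} + \bm{q} = \bm{v}$. I would substitute $\bm{u} \eqdef \mathcal{S}^*\bm{v} - \mathcal{S}^*\bm{p} = \mathcal{S}^*\bm{v} - \mathcal{S}^*\mathcal{S}\bm{z}_2^\star$; the Fermat condition for $\bm{p}$ gives $\bm{u} \in \partial f(\bm{z}_2^\star)$, equivalently $\bm{z}_2^\star \in \partial f^*(\bm{u})$ by Fenchel–Young/inverse-subdifferential duality. A short computation shows that $\bm{w} \eqdef \mathcal{S}^*(\bm{v} - \bm{p})$ then satisfies precisely the optimality condition defining $\bm{w}^\star$, so $\bm{w}^\star = \mathcal{S}^*(\bm{v}-\bm{p})$ and hence $\bm{q} = (\mathcal{S}^*)^{-1}\bm{w}^\star = \bm{v} - \bm{p}$, which is the claim. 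This is really the non-parametrized identity \eqref{md} — equivalently $J_{\partial h} + J_{(\partial h)^{-1}} = \mathcal{I}$ from \eqref{re_id} — transported through the bijection $\mathcal{S}^*$ acting on the dual variable.

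I expect the main obstacle to be bookkeeping the two different roles that $\mathcal{S}$ plays on the two sides: it is attached as $\mathcal{S}^{-1}$ to $f$ (so it scales the \emph{domain} of $f$, i.e. the primal variable) and as $\mathcal{S}^*$ to $f^*$ (so it scales the domain of $f^*$, i.e. the dual variable), and one must check that these are consistent — that the implicit change of variable on the primal side ($\bm{z}_2 = \mathcal{S}^{-1}\bm{z}_1$) is dual-matched by $\bm{w} = \mathcal{S}^*(\text{dual residual})$ rather than by $\mathcal{S}^{-1}$ or $(\mathcal{S}^*)^{-1}$. This is exactly the symmetry that the classical parametrization breaks (cf. the asymmetric $\frac1\gamma$ and $\gamma$ in \eqref{class_more}), so I would make the adjoint/inverse pairing explicit at each step and, as a sanity check, specialize to $\mathcal{S} = \rho\,\mathcal{I}$ (self-adjoint) to confirm the two parameter slots collapse correctly and \eqref{Moreau identity} reduces to the scalar equilibrate Moreau identity with no stray factors of $\rho$.
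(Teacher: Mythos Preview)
Your proposal is correct and follows essentially the same route as the paper: unpack both equilibrate proximal operators via the second (scaled-argmin) form of Proposition~\ref{new_def2}, write the Fermat conditions, invert the subdifferential via $\bm{u}\in\partial f(\bm{z}_2^\star)\iff \bm{z}_2^\star\in\partial f^*(\bm{u})$, and verify that $\bm{w}=\mathcal{S}^*(\bm{v}-\bm{p})$ satisfies the optimality condition defining $\bm{w}^\star$, concluding by single-valuedness of the resolvent. The paper's proof is organized around the same objects (your $\bm{z}_2^\star,\bm{w}^\star,\bm{u}$ are its $\bm{x},\bm{y},\bm{t}$) and differs only cosmetically in that it identifies both $\bm{t}$ and $\bm{y}$ with the common resolvent expression $(\mathcal{M}^{-1}+\partial f^*)^{-1}\mathcal{S}^{-1}\bm{v}$ rather than checking the Fermat condition for $\bm{w}^\star$ directly.
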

\begin{proof}
	To start, define the following variable substitutions:
	\begin{align}\label{defs}
	\bm{x} & = \underset{\bm{z}}{\text{argmin}} \,\, f(\bm{z})+ \frac{1}{2}\Vert \mathcal{S}\bm{z} - \bm{v}\Vert^2 = \mathcal{S}^{-1} \circ\textbf{Prox}_{f\mathcal{S}^{-1}} (\bm{v}) \nonumber\\
	\bm{y} & = \underset{\bm{z}}{\text{argmin}} \,\, f^*(\bm{z})+ \frac{1}{2}\Vert (\mathcal{S}^*)^{-1}\bm{z} - \bm{v}\Vert^2 =  \mathcal{S}^*\circ \textbf{Prox}_{f^*{\mathcal{S}^*}} (\bm{v}).
	\end{align}
	  The identity \eqref{Moreau identity} can be rewritten into
	\begin{equation}\label{Y to prove}
	\bm{v} = \mathcal{S}\bm{x} +(\mathcal{S}^*)^{-1} \bm{y}.
	\end{equation}
	Our goal can be stated as proving the following relation:
	\begin{equation}\label{to_pro}
	\bm{y} = \mathcal{S}^*(\bm{v} - \mathcal{S}\bm{x}). \tag{goal 0}
	\end{equation}
	Our goal is hence to prove \eqref{to_pro}.
	
	To this end, invoke the first definition in \eqref{defs}. Since $ \bm{x} $ is a minimizer, by the first-order optimality, we obtain
	\begin{align}
	\mathcal{S}^*(\bm{v} - \mathcal{S}\bm{x}) \in \partial {f}(\bm{x}) 
	\iff	& \bm{x} \in \partial f^*(  \mathcal{S}^*\bm{v} - \mathcal{S}^*\mathcal{S}\bm{x} ) \nonumber\\
	\iff	& \mathcal{S}^{-1}(\bm{v} -  \bm{v} + \mathcal{S}\bm{x})  \in \partial f^*(  \mathcal{S}^*\bm{v} - \mathcal{S}^*\mathcal{S}\bm{x} ) \nonumber\\
	\iff	& \mathcal{S}^{-1}(\bm{v} - (\mathcal{S}^*)^{-1}\mathcal{S}^*(  \bm{v} - \mathcal{S}^*\bm{x})  ) \in \partial f^*(  \mathcal{S}^*\bm{v} - \mathcal{S}^*\mathcal{S}\bm{x} ) \nonumber\\
	\iff	& 	\mathcal{S}^{-1}(\bm{v} - 	(\mathcal{S}^*)^{-1}\bm{t}) \in \partial f^*(\bm{t}) \nonumber\\
	\iff	& 	\bm{t} = (\mathcal{M}^{-1} + \partial f^* )^{-1} \mathcal{S}^{-1}\bm{v},
	\label{v-sx}
	\end{align}
	with equality holds due to the above resolvent mapping being single-valued, see \cite{bauschke2017convex} (alternatively, see later Proposition \ref{prop_firm2} that the proximal operator is single-valued),
	where variable substitution $ \bm{t} \eqdef \mathcal{S}^*(  \bm{v} - \mathcal{S}^*\bm{x})  $ is used.
	
	Now, invoke the second definition in \eqref{defs}. Since $ \bm{y} $ is a minimizer, by the first-order optimality, we obtain
	\begin{align}\label{f^*_Y}
			&	\mathcal{S}^{-1}(\bm{v} - 	(\mathcal{S}^*)^{-1}\bm{y}) \in \partial f^*(\bm{y})\nonumber\\
	\iff	&  		\bm{y} = (\mathcal{M}^{-1} + \partial f^* )^{-1} \mathcal{S}^{-1}\bm{v}
	\end{align}
	Comparing \eqref{v-sx} and \eqref{f^*_Y}, we obtain
	\begin{equation}
	\bm{y} = \bm{t} = \mathcal{S}^*(\bm{v} - \mathcal{S}\bm{x}).
	\end{equation}
	That is, we have shown \eqref{to_pro}. The proof is therefore concluded.
\end{proof}

\subsubsection{Translation rule}
The classical proximal operator  and the new one can be  easily converted. 
For  the sake of generality, we present  the  metric environment case, which subsumes  the scalar one.
	\begin{lem}[translation rule]\label{lem_rel2}
	The classical proximal operator  defined in \eqref{cla_prox}, and  the  equilibrate proximal operator defined in \eqref{def_new}  can be  converted through
	\begin{align}
	&(\text{Remove extra-scaling}) \quad		\textbf{Prox}_{f\mathcal{S}^{-1}}( \bm{v} )  = \quad\mathcal{S}\,\, \circ \,	\textbf{Prox}_{ f}^{\mathcal{M}^{-1}}\circ\mathcal{S}^{-1} \,(\bm{v}),  \nonumber\\
	&\,\,(\text{Equip  extra-scaling})\quad\,\,\,\,  	\textbf{Prox}_{ f}^{\mathcal{M}^{-1}}(\bm{v}) = \,\mathcal{S}^{-1}\circ\textbf{Prox}_{f\mathcal{S}^{-1}}\circ\mathcal{S} \,\,\, ( \bm{v} ).
	\end{align}
\end{lem}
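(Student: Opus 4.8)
The plan is to establish the first (``Remove extra-scaling'') identity by a direct substitution that replaces the metric-norm penalty in the classical proximal operator with a Euclidean one via the Euclidean embedding $\mathcal{M}=\mathcal{S}^{*}\mathcal{S}$ of Lemma~\ref{lem_emb}, and then to obtain the second (``Equip extra-scaling'') identity purely algebraically from the bijectivity of $\mathcal{S}$.

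Concretely, I would start from the right-hand side of the first identity and unfold Definition~\ref{cla_prox}:
\[
\textbf{Prox}_{f}^{\mathcal{M}^{-1}}(\mathcal{S}^{-1}\bm{v}) = \underset{\bm{z}}{\text{argmin}}\,\, f(\bm{z}) + \frac{1}{2}\Vert \bm{z} - \mathcal{S}^{-1}\bm{v}\Vert_{\mathcal{M}}^{2}.
\]
By Lemma~\ref{lem_emb}, $\Vert \bm{z}-\mathcal{S}^{-1}\bm{v}\Vert_{\mathcal{M}}^{2} = \Vert \mathcal{S}(\bm{z}-\mathcal{S}^{-1}\bm{v})\Vert^{2} = \Vert \mathcal{S}\bm{z}-\bm{v}\Vert^{2}$, so the minimization collapses to $\underset{\bm{z}}{\text{argmin}}\,\, f(\bm{z}) + \frac{1}{2}\Vert \mathcal{S}\bm{z}-\bm{v}\Vert^{2}$. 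Left-composing with $\mathcal{S}$ and invoking the second equality in Proposition~\ref{new_def2} identifies the result with $\textbf{Prox}_{f\mathcal{S}^{-1}}(\bm{v})$, which is exactly the first claimed identity.

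Since the first identity holds for every $\bm{v}\in\mathscr{H}$, I would read it as the operator equation $\textbf{Prox}_{f\mathcal{S}^{-1}} = \mathcal{S}\circ\textbf{Prox}_{f}^{\mathcal{M}^{-1}}\circ\mathcal{S}^{-1}$; left-composing with $\mathcal{S}^{-1}$ and right-composing with $\mathcal{S}$ (both legitimate because $\mathcal{S}$ is bijective) yields $\textbf{Prox}_{f}^{\mathcal{M}^{-1}} = \mathcal{S}^{-1}\circ\textbf{Prox}_{f\mathcal{S}^{-1}}\circ\mathcal{S}$, the second identity.

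There is no real obstacle here; the only point deserving a line of justification is that each $\text{argmin}$ appearing above is single-valued, so that the compositions of point maps are well-defined. This is immediate from the $1$-strong convexity of every objective (a CCP function plus a quadratic term), and is the single-valuedness property recorded later in Proposition~\ref{prop_firm2}. Everything else reduces to the norm translation $\Vert \bm{z}-\mathcal{S}^{-1}\bm{v}\Vert_{\mathcal{M}}^{2} = \Vert \mathcal{S}\bm{z}-\bm{v}\Vert^{2}$ together with bookkeeping of the outer $\mathcal{S}^{\pm1}$ factors.
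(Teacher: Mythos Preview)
Your proof is correct and essentially matches the paper's: both establish the first identity by unfolding Definition~\ref{cla_prox} at $\mathcal{S}^{-1}\bm{v}$, rewriting $\Vert\bm{z}-\mathcal{S}^{-1}\bm{v}\Vert_{\mathcal{M}}^{2}=\Vert\mathcal{S}\bm{z}-\bm{v}\Vert^{2}$, and recognizing the result as $\textbf{Prox}_{f\mathcal{S}^{-1}}(\bm{v})$ via Proposition~\ref{new_def2}. The only difference is cosmetic: for the second identity the paper repeats the direct computation (starting from $\mathcal{S}^{-1}\textbf{Prox}_{f\mathcal{S}^{-1}}(\mathcal{S}\bm{v})$ and again translating the norm), whereas you obtain it more economically by conjugating the already-established first identity with $\mathcal{S}^{\pm1}$ using bijectivity.
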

\begin{proof}
	For the first relation, from the right-hand side, we have 
	\begin{equation}
	\mathcal{S}\, 	\textbf{Prox}_{ f}^{\mathcal{M}^{-1}}(\mathcal{S}^{-1}\bm{v})=\mathcal{S}\,\underset{\bm{z}}{\text{argmin}} \,\,  f(\bm{z}) + \frac{1}{2}\Vert \bm{z} - \mathcal{S}^{-1}\bm{v}\Vert^2_\mathcal{M}
	=\mathcal{S}\,\underset{\bm{z}}{\text{argmin}} \,\,  f(\bm{z}) + \frac{1}{2}\Vert \mathcal{S}\bm{z} - \bm{v}\Vert^2
	= \textbf{Prox}_{f\mathcal{S}^{-1} }(\bm{v})
	\end{equation}
	For the second relation, from the right-hand side, we have 
	\begin{equation}
	\mathcal{S}^{-1}\textbf{Prox}_{f\mathcal{S}^{-1}}( \mathcal{S}\bm{v} ) 
	=\mathcal{S}^{-1}\mathcal{S}\,\underset{\bm{z}}{\text{argmin}} \,\,f(\bm{z}) + \frac{1}{2}\Vert \mathcal{S}\bm{z} - \mathcal{S}\bm{v}\Vert^2
	= \underset{\bm{z}}{\text{argmin}} \,\,f(\bm{z}) + \frac{1}{2}\Vert \bm{z} - \bm{v}\Vert^2_\mathcal{M}
	=	\textbf{Prox}_{ f}^{\mathcal{M}^{-1}} (\bm{v}) 
	\end{equation}
	 The proof is now concluded.
\end{proof}

\subsubsection{Convergence property}
For the sake of concreteness, we prove that the  equilibrate proximal operator  is firmly non-expansive and single-valued. This is the key property to ensure   convergence and is used in the next section.
\begin{prop}\label{prop_firm2}
	$ 	\textbf{Prox}_{f \rho} $   and its generalization 
	$\textbf{Prox}_{f\mathcal{S}}$, defined in \eqref{new_def1} and \eqref{def_new}, respectively, are firmly nonexpansive and single-valued.	
\end{prop}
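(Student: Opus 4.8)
The plan is to reduce the claim to the well-known facts for the classical (metric) proximal operator via the translation rule of Lemma~\ref{lem_rel2}, and to handle the scalar case as the special instance $\mathcal{S}=\rho\,\mathcal{I}$. First I would recall that for $f\in\Gamma_0(\mathscr H)$ the classical proximal operator $\textbf{Prox}_f^{\mathcal{M}^{-1}}$ is exactly the resolvent $J_{\mathcal{M}^{-1}\partial f}$ in the metric space $\mathscr H_\mathcal{M}$, which is single-valued (the objective $f(\bm z)+\tfrac12\Vert \bm z-\bm v\Vert^2_\mathcal{M}$ is strongly convex, closed, and proper, hence has a unique minimizer) and, by \eqref{rel00} applied in $\mathscr H_\mathcal{M}$, is firmly nonexpansive \emph{with respect to the $\mathcal{M}$-inner product}. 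This is the starting material; everything earlier in the excerpt supplies it.

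Next I would invoke Lemma~\ref{lem_rel2} in the form $\textbf{Prox}_{f\mathcal{S}^{-1}}=\mathcal{S}\circ \textbf{Prox}_f^{\mathcal{M}^{-1}}\circ\mathcal{S}^{-1}$. Single-valuedness is immediate: the composition of single-valued maps with the bijective linear map $\mathcal{S}$ (and its inverse) is single-valued, so $\textbf{Prox}_{f\mathcal{S}^{-1}}(\bm v)$ is a genuine point for every $\bm v$. For firm nonexpansiveness in the \emph{Euclidean} norm, the key step is the change of variables: for any $\bm u,\bm v$ set $\bm x=\textbf{Prox}_f^{\mathcal{M}^{-1}}(\mathcal{S}^{-1}\bm u)$ and $\bm y=\textbf{Prox}_f^{\mathcal{M}^{-1}}(\mathcal{S}^{-1}\bm v)$, so that $\textbf{Prox}_{f\mathcal{S}^{-1}}\bm u=\mathcal{S}\bm x$, $\textbf{Prox}_{f\mathcal{S}^{-1}}\bm v=\mathcal{S}\bm y$. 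Firm nonexpansiveness of $\textbf{Prox}_f^{\mathcal{M}^{-1}}$ in $\mathscr H_\mathcal{M}$ gives
\begin{equation}
\Vert \bm x-\bm y\Vert^2_{\mathcal M}\le \langle \mathcal{S}^{-1}\bm u-\mathcal{S}^{-1}\bm v,\ \bm x-\bm y\rangle_{\mathcal M}.
\end{equation}
Now expand both sides using $\langle \cdot,\cdot\rangle_{\mathcal M}=\langle \mathcal{S}\cdot,\mathcal{S}\cdot\rangle$ from Lemma~\ref{lem_emb}: the left side is $\Vert \mathcal{S}\bm x-\mathcal{S}\bm y\Vert^2$, and on the right $\langle \mathcal{S}\mathcal{S}^{-1}\bm u-\mathcal{S}\mathcal{S}^{-1}\bm v,\ \mathcal{S}\bm x-\mathcal{S}\bm y\rangle=\langle \bm u-\bm v,\ \mathcal{S}\bm x-\mathcal{S}\bm y\rangle$. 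Hence
\begin{equation}
\Vert \textbf{Prox}_{f\mathcal{S}^{-1}}\bm u-\textbf{Prox}_{f\mathcal{S}^{-1}}\bm v\Vert^2\le \langle \bm u-\bm v,\ \textbf{Prox}_{f\mathcal{S}^{-1}}\bm u-\textbf{Prox}_{f\mathcal{S}^{-1}}\bm v\rangle,
\end{equation}
which is precisely \eqref{f_non}. The scalar statement follows by taking $\mathcal{S}=\rho\,\mathcal{I}$, $\mathcal{M}=\rho^2\mathcal{I}$; note $\rho\neq 0$ suffices since only $\mathcal{S}$ bijective is used (the sign is irrelevant). Alternatively, one can argue directly from \eqref{def_new}: strong convexity of $f(\bm z)+\tfrac12\Vert\mathcal{S}\bm z-\bm v\Vert^2$ in $\bm z$ gives the unique $\bm z^\star$, first-order optimality reads $\mathcal{S}^*(\bm v-\mathcal{S}\bm z^\star)\in\partial f(\bm z^\star)$, and then monotonicity of $\partial f$ between two such optimality conditions, combined with $\mathcal{S}^*\mathcal{S}=\mathcal{M}$, yields the same inequality — this is essentially the computation already performed in the proof of Proposition~\ref{pmoreau}.

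The main obstacle, such as it is, is purely bookkeeping: one must be careful that "firmly nonexpansive" for the classical metric proximal operator is the statement \eqref{f_non} read in the $\mathcal{M}$-geometry, not the Euclidean one, and that the translation rule is exactly the conjugation by $\mathcal{S}$ that converts $\mathcal M$-firm-nonexpansiveness into Euclidean firm nonexpansiveness. No genuinely hard estimate is involved; the only thing to verify with care is that $\mathcal S$ bijective (equivalently $\rho\neq0$, equivalently $\mathcal M\succ 0$) is all that is needed, so the proposition holds under exactly the hypotheses stated.
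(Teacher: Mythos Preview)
Your argument is correct and takes a genuinely different route from the paper. The paper proceeds \emph{directly}: it records that $\textbf{Prox}_{f\mathcal{S}}(\bm v)=\operatorname*{argmin}_{\bm z}\, f(\mathcal{S}\bm z)+\tfrac12\Vert\bm z-\bm v\Vert^2$, writes the first-order condition as $\bm z=(\mathcal{I}+\partial(f\circ\mathcal{S}))^{-1}\bm v$, and then invokes three external lemmas (a qualification condition, the subdifferential chain rule $\partial(f\circ\mathcal{S})=\mathcal{S}^*\partial f\,\mathcal{S}$, and the fact that $\mathcal{S}^*\mathcal{A}\mathcal{S}$ is maximally monotone when $\mathcal{S}\mathcal{S}^*$ is invertible) to conclude that $\partial(f\circ\mathcal{S})$ is maximally monotone, so its resolvent is single-valued and firmly nonexpansive in the Euclidean norm. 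Your approach instead \emph{reduces} to the classical metric proximal operator via the translation rule (Lemma~\ref{lem_rel2}) and then transfers $\mathcal M$-firm-nonexpansiveness to Euclidean firm nonexpansiveness by the conjugation $\mathcal{S}\circ(\,\cdot\,)\circ\mathcal{S}^{-1}$, using only $\langle\cdot,\cdot\rangle_\mathcal{M}=\langle\mathcal{S}\cdot,\mathcal{S}\cdot\rangle$. What your route buys is economy: once the translation rule is in hand, no chain rule or additional maximal-monotonicity lemma is needed, and the argument is a two-line change of variables. What the paper's route buys is independence and extensibility: it does not rely on the translation rule or on prior knowledge of the metric prox, and---because it isolates the maximal monotonicity of $\mathcal{S}^*\partial f\,\mathcal{S}$ via the condition that $\mathcal{S}\mathcal{S}^*$ be invertible---it is exactly the argument the paper reuses in Section~\ref{sec_inj} to cover the merely injective case, where your translation-rule reduction no longer applies verbatim. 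Your ``alternatively'' sketch at the end is close in spirit to the paper's argument, though the paper packages the monotonicity step through the cited resolvent machinery rather than a direct computation.
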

\begin{proof}
	To start, we need some lemmas.
	\begin{lem}\cite[Proposition 6.19]{bauschke2017convex}\label{lem1}
		Let $ C$ be a convex subset of $ \mathscr{H} $, let $ \mathscr{L} $ be a real Hilbert space, and let  $D$ be a convex subset of $ \mathscr{L} $. Suppose $ D \bigcap \text{int}\,\, \mathcal{L}(C) \neq \emptyset$ or $ \mathcal{L}(C) \bigcap \text{int}\,\, D \neq \emptyset$. Then, $ 0\in \text{sri}\, (D - \mathcal{L}(C) ) $.
	\end{lem}
	
	\vspace{1pt}
	
	\begin{lem}\cite[Corollary 16.53]{bauschke2017convex}\label{dom_ran}
		Let $ f $ be a CCP function and $  \mathcal{L} \in \mathfrak{B}(\mathscr{H}, \mathscr{L}) $. Suppose $ 0\in \text{sri}\, (\text{dom}(f) -\text{ran}(\mathcal{L}) ) $. Then, 
		\begin{equation*}
		\partial (f\circ\mathcal{L}) = \mathcal{L}^*\circ\partial f\circ\mathcal{L}.
		\end{equation*}
	\end{lem}

	\begin{lem}\cite[Proposition 23.25]{bauschke2017convex}\label{coro_maxm}
		Let $ \mathscr{L} $ be a real Hilbert space,  suppose  $ \mathcal{L} \in \mathfrak{B}(\mathscr{H}, \mathscr{L}) $ is such that $ \mathcal{L}\mathcal{L}^* $ is invertible, let $\mathcal{A}: \mathscr{L} \rightarrow2^\mathscr{L}$ be maximal monotone, and set $ \mathcal{B} = \mathcal{L}^*\mathcal{A}\mathcal{L} $. Then, 	
		$\mathcal{B}: \mathscr{H}\rightarrow 2^\mathscr{H}$ is maximal monotone.	
	\end{lem}
	
	Below, we consider $\textbf{Prox}_{f\mathcal{S}}$, which includes the scalar parameter case. In view of Lemma \ref{lem1}, due to  $ \text{ran}(\mathcal{S}) \bigcap \text{int}\,\, \text{dom}(f) \neq \emptyset$, we have  $ 0\in \text{sri}\, (\text{dom}(f) -\text{ran}(\mathcal{L}) ) $. Then, by Lemma \ref{dom_ran}, we have
	$ \partial (f\circ\mathcal{S}) =  \mathcal{S}^*\circ\partial f\circ\mathcal{S} $. 
	By Lemma \ref{coro_maxm}, the operator $ \partial (f\circ\mathcal{S}) $ is maximal monotone.	
	
	Now, invoke the definition of $ \textbf{Prox}_{f\mathcal{S}} $, which yields
	\begin{align}
	\textbf{Prox}_{f\mathcal{S}} (\bm{v}) = \underset{\bm{z}}{\text{argmin}} \,\, f(\mathcal{S}\bm{z})+ \frac{1}{2}\Vert \bm{z} - \bm{v}\Vert^2 
	\Longleftrightarrow& 	\,\bm{v} - \bm{z}  \in  \partial (f\circ\mathcal{S}) \,\bm{z},   \\
	\Longleftrightarrow&    \,\bm{z}   \in (\mathcal{I} + \partial (f\circ\mathcal{S}))^{-1}\,\bm{v}.\nonumber
	\end{align}
	Following from above, operator $ \partial (f\mathcal{S}) $ is maximal monotone. Then, the last line is in fact single-valued. Furthermore, due to the maximal monotone property, $ 	\textbf{Prox}_{f\mathcal{S}}  =  (\mathcal{I} + \partial (f\circ\mathcal{S}))^{-1} $ is firmly nonexpansive owing to \cite{bauschke2017convex}. The proof is therefore concluded.
\end{proof}

\subsection{Equilibrate monotone operators}

Here, we consider the  \textit{equilibrate parametrization} in the monotone operator case. It is achieved by associating the parameter to  both the domain  and the range of a monotone operator, rather than conventionally the range  only.
Below,  we present some brief arguments by appealing to the equivalence  between the proximal operator and the resolvent.

Recall from \eqref{equ_prox} that the resolvent and the proximal operator are equivalent given $ f \in \Gamma_0  (\mathscr H)$. In this case,
\begin{equation}
	\textbf{Prox}_{f \rho}  = 
J_{\rho\,\mathcal{A}\,\rho } = \bigg(  \mathcal{I} +\rho\,\mathcal{A}\,\rho \bigg)^{-1}, \qquad \rho \neq 0,
\end{equation}
with  $  \mathcal{A} =  \partial f$ and $  f \in \Gamma_0 (\mathscr H)$,
where,   for   light of notations, we denote $ \rho\,\mathcal{A}\,\rho  = (\rho\mathcal{I}) \circ \mathcal{A}\circ (\rho\mathcal{I})  $.

Suppose the operator $ \mathcal{A}: \mathscr H \rightarrow 2^\mathscr H$ is maximal monotone. Then, $ \rho\,\mathcal{A}\,\rho $ is also maximal monotone.
The identity relation \eqref{re_id} is extended into
\begin{equation}
J_{\rho\,\mathcal{A}\,\rho} + J_{ \rho^{-1} \mathcal{A}^{-1} \rho^{-1} } = \mathcal{I}.
\end{equation}

Similarly, in a metric space environment  $  \mathscr {H}_{\mathcal{M}}$ with decomposed metric $ \mathcal{S} $ as in Lemma \ref{lem_emb}, we have
\begin{equation}
	\textbf{Prox}_{f \mathcal{S}} = 
J_{\mathcal{S}^* \mathcal{A}\mathcal{S}} = \big(   \mathcal{I} + \mathcal{S}^* \circ\mathcal{A}\circ \mathcal{S} \big)^{-1} ,
\end{equation}
with  $  \mathcal{A} =  \partial f$,  $  f \in \Gamma_0 (\mathscr H)$ and $ \mathcal{S} \in \mathfrak B (\mathscr H) $ being bijective.

Suppose the operator $ \mathcal{A}: \mathscr H \rightarrow 2^\mathscr H$ is maximal monotone. Then, $\mathcal{S}^* \mathcal{A}\mathcal{S} $ is also maximal monotone.
The identity relation \eqref{re_id} is extended into
\begin{equation}
J_{\mathcal{S}^* \mathcal{A}\mathcal{S}} + J_{ (\mathcal{S}^*)^{-1} \mathcal{A}^{-1}\mathcal{S}^{-1}} = \mathcal{I}.
\end{equation}

\subsection{Extension:  injective parameters}\label{sec_inj}
In the previous sections, we consider parameter $ \mathcal{S} \in \mathfrak B (\mathscr H) $ to be bijective.
In fact,  under one additional condition, this requirement can be relaxed into being injective. This aspect will be useful in the next section,  enabling a unified treatment of the metric parameter and injective matrices (in the ADMM constraint). 

The following equilibrate proximal operators are also well-defined:
\begin{prop}
	Given $ f \in \Gamma_0 (\mathscr H) $ and $ \mathcal D  \in \mathfrak B (\mathscr H,  \mathscr L) $ being injective,  the following \textit{equilibrate proximal operators} are firmly non-expansive and single-valued:
	\begin{equation}
	\textbf{Prox}_{f\mathcal{D}^{-1}} (\bm{v}) = \underset{\bm{z}_1 \in \mathscr L }{\text{argmin}} \,\, f\left(\mathcal{D}^{-1}\bm{z}_1\right)+ \frac{1}{2}\Vert \bm{z}_1 - \bm{v}\Vert^2
	=  \mathcal{D} \, \underset{\bm{z}_2 \in \mathscr H}{\text{argmin}} \,\, f(\bm{z}_2)+ \frac{1}{2}\Vert \mathcal{D}\bm{z}_2 - \bm{v}\Vert^2,
	\end{equation}
		\begin{equation}
	\textbf{Prox}_{f\mathcal{D}^{*}} (\bm{v}) = \underset{\bm{z}_1 \in \mathscr L}{\text{argmin}} \,\, f\left(\mathcal{D}^*\bm{z}_1\right)+ \frac{1}{2}\Vert \bm{z}_1 - \bm{v}\Vert^2
	=  ( \mathcal{D}^*)^{-1}\, \underset{\bm{z}_2 \in \mathscr H}{\text{argmin}} \,\, f(\bm{z}_2)+ \frac{1}{2}\Vert( \mathcal{D}^*)^{-1}\bm{z}_2 - \bm{v}\Vert^2.
	\end{equation}
\end{prop}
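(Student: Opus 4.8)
The plan is to mimic the proof of Proposition \ref{prop_firm2}, checking at each step that bijectivity of $\mathcal{S}$ was only ever used through two facts: (a) the composition rule $\partial(f\circ\mathcal{L}) = \mathcal{L}^*\circ\partial f\circ\mathcal{L}$ holds, which by Lemma \ref{dom_ran} only requires the qualification $0\in\text{sri}(\text{dom}(f)-\text{ran}(\mathcal{L}))$; and (b) the operator $\mathcal{L}^*\circ\partial f\circ\mathcal{L}$ is maximal monotone, which by Lemma \ref{coro_maxm} only requires $\mathcal{L}\mathcal{L}^*$ to be invertible. For an injective $\mathcal{D}\in\mathfrak{B}(\mathscr H,\mathscr L)$, note that $\mathcal{D}^*\mathcal{D}$ is invertible (injectivity of $\mathcal{D}$), but $\mathcal{D}\mathcal{D}^*$ need not be; this is exactly the place where the ``one additional condition'' must enter, and I would state that condition explicitly as $\text{ran}(\mathcal{D})\cap\text{int}\,\text{dom}(f)\neq\emptyset$ (or the $\text{sri}$ qualification), which is what makes Lemma \ref{lem1} applicable and substitutes for the role that $\mathcal{S}$ being onto previously played.

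Concretely, for the first operator $\textbf{Prox}_{f\mathcal{D}^{-1}}$: first establish the second equality by the same variable substitution $\bm{z}_2 \eqdef \mathcal{D}^{-1}\bm{z}_1$ as in Proposition \ref{new_def1}, valid on $\text{ran}(\mathcal{D})$ since $\mathcal{D}$ is injective. Then write out the first-order optimality condition for $\bm{z}\mapsto f(\mathcal{D}\bm{z}) + \tfrac12\Vert\mathcal{D}\bm{z}-\bm{v}\Vert^2$ — wait, more precisely for the minimization defining the operator — obtaining $\bm{v}-\bm{z}\in\partial(f\circ\mathcal{D})\bm{z}$, hence $\bm{z}\in(\mathcal{I}+\partial(f\circ\mathcal{D}))^{-1}\bm{v}$. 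Apply Lemma \ref{lem1} with $C = \mathscr H$, $D = \text{dom}(f)$, $\mathcal{L}=\mathcal{D}$ and the hypothesis $\text{ran}(\mathcal{D})\cap\text{int}\,\text{dom}(f)\neq\emptyset$ to get $0\in\text{sri}(\text{dom}(f)-\text{ran}(\mathcal{D}))$; then Lemma \ref{dom_ran} gives $\partial(f\circ\mathcal{D})=\mathcal{D}^*\circ\partial f\circ\mathcal{D}$; then Lemma \ref{coro_maxm} — applied in the ``reverse direction'', with the roles arranged so the invertible product is $\mathcal{D}^*\mathcal{D}$ rather than $\mathcal{D}\mathcal{D}^*$ — gives maximal monotonicity of $\partial(f\circ\mathcal{D})$. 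Single-valuedness and firm nonexpansiveness of the resolvent $(\mathcal{I}+\partial(f\circ\mathcal{D}))^{-1}$ then follow from the standard resolvent facts (the relations in \eqref{rel00} together with maximal monotonicity).

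For the second operator $\textbf{Prox}_{f\mathcal{D}^*}$: this is genuinely the same statement with $\mathcal{D}$ replaced by $\mathcal{D}^*$, so I would reduce to the first case. Since $\mathcal{D}$ is injective, $\mathcal{D}^*$ has dense range (indeed $\text{ran}(\mathcal{D}^*) = (\ker\mathcal{D})^\perp = \mathscr H$ in finite dimensions), so $\mathcal{D}^*$ is surjective and the qualification $\text{ran}(\mathcal{D}^*)\cap\text{int}\,\text{dom}(f)\neq\emptyset$ is automatic whenever $\text{dom}(f)$ has nonempty interior; moreover $\mathcal{D}\mathcal{D}^* = (\mathcal{D}^*)^*\mathcal{D}^*$ plays the role of the ``$\mathcal{L}^*\mathcal{L}$'' product — but here the relevant invertible product for applying Lemma \ref{coro_maxm} to $\partial(f\circ\mathcal{D}^*) = (\mathcal{D}^*)^*\circ\partial f\circ\mathcal{D}^* = \mathcal{D}\circ\partial f\circ\mathcal{D}^*$ is $\mathcal{D}^*(\mathcal{D}^*)^* $... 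I need to be careful about which composition order Lemma \ref{coro_maxm} demands. I would match the lemma's template $\mathcal{B}=\mathcal{L}^*\mathcal{A}\mathcal{L}$ with $\mathcal{L}=\mathcal{D}^*$, requiring $\mathcal{L}\mathcal{L}^* = \mathcal{D}^*\mathcal{D}$ invertible, which holds by injectivity of $\mathcal{D}$ — so this case actually needs \emph{no} extra condition beyond what is already available. Then the variable substitution $\bm{z}_2\eqdef(\mathcal{D}^*)^{-1}\bm{z}_1$ for the second equality is the subtle point, as $(\mathcal{D}^*)^{-1}$ exists only as a set-valued inverse in general; I would handle this by working on $\text{ran}(\mathcal{D}^*)$ and noting the argmin is attained there.

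The main obstacle I anticipate is bookkeeping the two non-symmetric products $\mathcal{D}\mathcal{D}^*$ versus $\mathcal{D}^*\mathcal{D}$ and being precise about exactly which one Lemma \ref{coro_maxm} needs invertible in each of the two cases, then matching that to what injectivity of $\mathcal{D}$ actually delivers — and, correspondingly, pinning down the minimal ``one additional condition'' (the range-interior/$\text{sri}$ qualification) and verifying it is only needed for the $\textbf{Prox}_{f\mathcal{D}^{-1}}$ case and not for $\textbf{Prox}_{f\mathcal{D}^{*}}$. Everything else is a routine transcription of the Proposition \ref{prop_firm2} argument.
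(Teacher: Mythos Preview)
Your overall plan---transcribe Proposition~\ref{prop_firm2} and isolate which product $\mathcal{L}\mathcal{L}^*$ must be invertible---is exactly the paper's. But you stumble on the first operator. The notation $\textbf{Prox}_{f\mathcal{D}^{-1}}$ means the relevant composition is $f\circ\mathcal{D}^{-1}$, not $f\circ\mathcal{D}$: the first-order condition for $\operatorname*{argmin}_{\bm{z}_1\in\mathscr L}\,f(\mathcal{D}^{-1}\bm{z}_1)+\tfrac12\Vert\bm{z}_1-\bm{v}\Vert^2$ is $\bm{v}-\bm{z}_1\in\partial(f\circ\mathcal{D}^{-1})(\bm{z}_1)$. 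Hence in Lemma~\ref{coro_maxm} the correct assignment is $\mathcal{L}_1=\mathcal{D}^{-1}$, which gives
\[
\mathcal{L}_1\mathcal{L}_1^{*}\;=\;\mathcal{D}^{-1}(\mathcal{D}^{-1})^{*}\;=\;\mathcal{D}^{-1}(\mathcal{D}^{*})^{-1}\;=\;(\mathcal{D}^{*}\mathcal{D})^{-1},
\]
invertible precisely because $\mathcal{D}$ is injective. There is no need to apply the lemma ``in the reverse direction'', and no extra $\text{sri}$ qualification is needed either: $\text{ran}(\mathcal{D}^{-1})=\mathscr{H}$, so $\text{dom}(f)-\text{ran}(\mathcal{D}^{-1})=\mathscr{H}$ and Lemma~\ref{lem1} is trivially satisfied. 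The paper's ``one additional step'' is exactly this invertibility check---it is a verification, not an added hypothesis on $f$ or $\mathcal{D}$.

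Your handling of the second operator is correct and matches the paper: with $\mathcal{L}_2=\mathcal{D}^{*}$ one gets $\mathcal{L}_2\mathcal{L}_2^{*}=\mathcal{D}^{*}\mathcal{D}$, again invertible by injectivity of $\mathcal{D}$. So both cases reduce to the \emph{same} invertible block $\mathcal{D}^{*}\mathcal{D}$ (or its inverse), and the entire proof is that single observation plus the Proposition~\ref{prop_firm2} template---considerably shorter than the route you sketched.
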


\begin{proof}
The proof follows  the same  arguments of  the  bijective case in Proposition \ref{prop_firm2}, with one additional step. That is, in view of Lemma \ref{coro_maxm}, we need $ \mathcal{L}\mathcal{L}^* $ being invertible to guarantee the maximal monotone property, which hold for the following two cases:  let $ \mathcal{L}_1 =  \mathcal{D}^{-1}$ and $ \mathcal{L}_2 =  \mathcal{D}^*$, then
\begin{equation}
 \mathcal{L}_1\mathcal{L}^*_1 = \mathcal{D}^{-1}  ({\mathcal{D}}^*)^{-1} = (\mathcal{D}^*\mathcal{D})^{-1}, 
 \qquad 
 \mathcal{L}_2\mathcal{L}^*_2 = \mathcal{D}^*\mathcal{D},
\end{equation}
which are clearly invertible for an injective operator $ \mathcal{D} $. The proof is now concluded.
\end{proof}



\section{Equilibrate ADMM}\label{sec_admm}
In the  previous section, we propose the \textit{equilibrate parametrization} that naturally avoids  extra scaling. Here, we would  exploit its potential benefits by applying it to the ADMM algorithm.
We would obtain a direct fixed-point analysis  framework for  the general ADMM, previously not available (the conventional way is by appealing to the DRS, see \cite[sec. 3.1, 3.2]{ryu2022large},  \cite{poon2019trajectory}).
Moreover, we prove that the self-duality will be restored,  implying  direct access to an unscaled fixed-point.

\subsection{Metric space iterates}

Consider the following general problem:
\begin{align}\label{ab_prob}
\underset{\bm{x},\bm{z}}{\text{minimize}}
\quad&   f(\bm{x}) + g(\bm{z})\quad \nonumber\\
\text{subject\,to}
\quad&  \bm{A}\bm{x} - \bm{B}\bm{z} = \bm{c} ,
\end{align}
with $ f, g \in \Gamma_0 (\mathscr H)  $ and matrices $ \bm{A}, \bm{B} $ have full columnn-rank (injective).

Given a metric space environment $ \mathscr{H}_\mathcal{M} $ with decomposed  metric $ \mathcal{S}  $  as in Lemma \ref{lem_emb},
the augmented Lagrangian  can be written into
\begin{align}\label{gen_L}
\mathcal{L}_\mathcal{M}(\bm{x},\bm{z},\bm{\lambda}) 
\,=\,	&  f(\bm{x}) + g (\bm{z})  + \langle   \bm{\lambda}, \, \bm{A}\bm{x} - \bm{B}\bm{z} - \bm{c}  \rangle + \frac{1}{2}\Vert  \bm{A}\bm{x} - \bm{B}\bm{z} - \bm{c}  \Vert^2_\mathcal{M} \nonumber\\
\,=\, 		&  f(\bm{x}) + g (\bm{z})  + \langle   \bm{\lambda}, \, \bm{A}\bm{x} - \bm{B}\bm{z} - \bm{c}\rangle + \frac{1}{2}\Vert \mathcal{S} \left(\bm{A}\bm{x} - \bm{B}\bm{z} - \bm{c}\right)  \Vert^2.
\end{align}
The  ADMM iterates are then given by
\begin{align}
\bm{x}^{k+1} =\,\,& \underset{\bm{x}}{\text{argmin}} \,\, 	\mathcal{L}_\mathcal{M}(\bm{x},\bm{z}^k,\bm{\lambda}^k)   
			 \,\quad=\,\, \underset{\bm{x}}{\text{argmin}} \,\, f(\bm{x}) + \frac{1}{2}\Vert \mathcal{S} \left(\bm{A}\bm{x} - \bm{B}\bm{z}^k - \bm{c}\right) +  (\mathcal{S}^*)^{-1} \bm{\lambda}^k \Vert^2  \nonumber\\				
\bm{z}^{k+1} =\,\, & \underset{\bm{z}}{\text{argmin}} \,\, 	\mathcal{L}_\mathcal{M} (\bm{x}^{k+1},\bm{z},\bm{\lambda}^k)      
			 \,=\,\, \underset{\bm{z}}{\text{argmin}} \,\,  g (\bm{z})  + \frac{1}{2}\Vert \mathcal{S} \left(\bm{A}\bm{x}^{k+1} - \bm{B}\bm{z} - \bm{c}\right) +  (\mathcal{S}^*)^{-1} \bm{\lambda}^k \Vert^2 \nonumber\\
\bm{\lambda}^{k+1} =\,\,  &\bm{\lambda}^{k} + \mathcal{M} \left( \bm{A}\bm{x}^{k+1} - \bm{B}\bm{z}^{k+1}  - \bm{c} \right). 	
\end{align}	
which can be written in terms of the \textit{equilibrate proximal operator}, as 
\begin{align}\label{R-ADMM}
\bm{x}^{k+1} =\,\,& (\mathcal S\bm{A})^{-1} \circ \textbf{Prox}_{f(\mathcal S\bm{A})^{-1}} \left(\quad\mathcal S   \bm{c} + \mathcal S\bm{B}\bm{z}^k  \quad - (\mathcal{S}^*)^{-1} \bm{\lambda}^k\right)  \nonumber\\				
\bm{z}^{k+1} =\,\, & (\mathcal S\bm{B})^{-1} \circ \textbf{Prox}_{g(\mathcal S\bm{B})^{-1}} \left(-\,\mathcal S   \bm{c} + \mathcal S\bm{A}\bm{x}^{k+1}  + (\mathcal{S}^*)^{-1} \bm{\lambda}^k\right) \nonumber\\
\bm{\lambda}^{k+1} =\,\,  &\bm{\lambda}^{k} + \mathcal S^*\mathcal S\left( \bm{A}\bm{x}^{k+1} - \bm{B}\bm{z}^{k+1} - \bm{c}\right).	\tag{E-ADMM}
\end{align}	
The above iterates are well-defined for injective mappings $ \bm{A} $ and $ \bm{B} $,
owing to Section \ref{sec_inj}.  We refer to the above as the \textit{Equilibrate ADMM} (abbr. E-ADMM).

To simplify the iterates expression, we  define the  following variable substitutions:
\begin{equation}\label{sub}
\widetilde{\bm{x}}^{k} \eqdef \mathcal S\bm{A}\bm{x}^{k}, \qquad \widetilde{\bm{z}}^{k} \eqdef \mathcal S\bm{B}\bm{z}^{k}, \qquad \widetilde{\bm{\lambda}}^{k} \eqdef (\mathcal{S}^*)^{-1}  \bm{\lambda}^{k}
\qquad \widetilde{\bm{c}} \eqdef\mathcal S \bm{c}.
\end{equation}
We refer to the following as the \textit{equilibrate scaled form}:
\begin{align}\label{scaled_form}
\widetilde{\bm{x}}^{k+1} =\,\,& \textbf{Prox}_{f(\mathcal S\bm{A})^{-1}} \left( \quad\widetilde{\bm{c}} + \widetilde{\bm{z}}^k  \quad- \widetilde{\bm{\lambda}}^{k}\right)  \nonumber\\				
\widetilde{\bm{z}}^{k+1} =\,\, &  \textbf{Prox}_{g(\mathcal S\bm{B})^{-1}} \left(-\,\widetilde{\bm{c}}+ \widetilde{\bm{x}}^{k+1} + \widetilde{\bm{\lambda}}^{k}\right) \nonumber\\
\widetilde{\bm{\lambda}}^{k+1} =\,\,  &\widetilde{\bm{\lambda}}^{k} + \widetilde{\bm{x}}^{k+1}- \widetilde{\bm{z}}^{k+1} - \widetilde{\bm{c}}. 	\tag{scaled form}
\end{align}


\subsection{Iterates convergence}
Here,  without additional assumptions, we prove the convergence of the  ADMM iterates \eqref{R-ADMM}  in  $ \mathscr{H}_\mathcal{M} $.
To start, we need one lemma. 
\begin{lem}\label{post_comp}
Let $ h(\bm{z}) \eqdef f(\bm{z}) - \langle \bm{c}, \bm{z} \rangle$, then
\begin{equation}
\textbf{Prox}_{h\mathcal{S}^{-1}} (\bm{v}) = \textbf{Prox}_{{f}\mathcal{S}^{-1}}  \big(\,  \bm{c} + \bm{v} \,\big)
\end{equation}

\begin{proof}
By definition  \eqref{def_new}, we have
	\begin{align}
\textbf{Prox}_{h\mathcal{S}^{-1}} (\bm{v}) 
=&  \underset{\bm{z}_1}{\text{argmin}} \,\, h\circ\mathcal{S}^{-1}(\bm{z}_1)+ \frac{1}{2}\Vert\bm{z}_1 - \bm{v}\Vert^2 \nonumber\\
=&  \underset{\bm{z}_1}{\text{argmin}} \,\,  f\circ\mathcal{S}^{-1} (\bm{z}_1) - \langle \bm{c}, \bm{z}_1 \rangle+ \frac{1}{2}\Vert\bm{z}_1 - \bm{v}\Vert^2 \nonumber\\
=&  \underset{\bm{z}_1}{\text{argmin}} \,\,  f\circ\mathcal{S}^{-1} (\bm{z}_1) + \frac{1}{2}\Vert\bm{z}_1 -\bm{c} - \bm{v}\Vert^2 \nonumber\\
=&  \textbf{Prox}_{{f}\mathcal{S}^{-1}} (  \bm{c} + \bm{v}  ) ,
\end{align}
which concludes the proof.
\end{proof}
\end{lem}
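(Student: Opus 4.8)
The goal is to establish that for $h(\bm{z}) = f(\bm{z}) - \langle \bm{c}, \bm{z}\rangle$, we have $\textbf{Prox}_{h\mathcal{S}^{-1}}(\bm{v}) = \textbf{Prox}_{f\mathcal{S}^{-1}}(\bm{c} + \bm{v})$. The plan is simply to unfold the definition \eqref{def_new} of the equilibrate proximal operator on both sides and verify the objectives of the two argmin problems agree up to the shift $\bm{v} \mapsto \bm{c} + \bm{v}$. First I would write $\textbf{Prox}_{h\mathcal{S}^{-1}}(\bm{v}) = \text{argmin}_{\bm{z}_1}\, h(\mathcal{S}^{-1}\bm{z}_1) + \tfrac12\Vert \bm{z}_1 - \bm{v}\Vert^2$, and substitute $h(\mathcal{S}^{-1}\bm{z}_1) = f(\mathcal{S}^{-1}\bm{z}_1) - \langle \bm{c}, \mathcal{S}^{-1}\bm{z}_1\rangle$.

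The key step is to absorb the linear term $-\langle \bm{c}, \mathcal{S}^{-1}\bm{z}_1\rangle$ into the quadratic. Here one must be careful: the linear term as written involves $\mathcal{S}^{-1}\bm{z}_1$, not $\bm{z}_1$ directly. However, in Lemma \ref{post_comp} the statement is $-\langle \bm{c}, \bm{z}\rangle$ composed with $\mathcal{S}^{-1}$, so the inner function is $h\circ \mathcal{S}^{-1}$ and the term becomes $-\langle \bm{c}, \mathcal{S}^{-1}\bm{z}_1\rangle = -\langle (\mathcal{S}^{-1})^*\bm{c}, \bm{z}_1\rangle$ — but the paper's displayed proof sketch writes it directly as $-\langle \bm{c}, \bm{z}_1\rangle$, which corresponds to interpreting $h$ as already acting with the inner product in the $\bm{z}_1$ variable (equivalently $h$ is defined on the range space). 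I would follow the paper's convention and treat the linear perturbation as $-\langle \bm{c},\bm{z}_1\rangle$ after composition. Then completing the square:
\begin{equation}
f(\mathcal{S}^{-1}\bm{z}_1) - \langle \bm{c},\bm{z}_1\rangle + \tfrac12\Vert \bm{z}_1 - \bm{v}\Vert^2 = f(\mathcal{S}^{-1}\bm{z}_1) + \tfrac12\Vert \bm{z}_1 - \bm{c} - \bm{v}\Vert^2 - \langle \bm{c},\bm{v}\rangle - \tfrac12\Vert\bm{c}\Vert^2.
\end{equation}
Since the last two terms are constant in $\bm{z}_1$, the argmin is unchanged, so $\textbf{Prox}_{h\mathcal{S}^{-1}}(\bm{v}) = \text{argmin}_{\bm{z}_1}\, f(\mathcal{S}^{-1}\bm{z}_1) + \tfrac12\Vert \bm{z}_1 - (\bm{c}+\bm{v})\Vert^2 = \textbf{Prox}_{f\mathcal{S}^{-1}}(\bm{c}+\bm{v})$ by definition.

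There is essentially no obstacle here — the result is a routine shift identity, the equilibrate analogue of the classical fact that adding a linear term $-\langle\bm{c},\cdot\rangle$ to the objective of a proximal operator translates its input by $\bm{c}$. The only point requiring a moment's care is notational consistency: making sure the linear term is expressed in the same variable ($\bm{z}_1$, the argument of the outer quadratic) before completing the square, and confirming that single-valuedness of $\textbf{Prox}_{f\mathcal{S}^{-1}}$ (Proposition \ref{prop_firm2}) lets us speak of "the" minimizer rather than a set. Both are immediate given the earlier development, so the proof is a two-line computation.
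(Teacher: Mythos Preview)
Your proposal is correct and follows the same route as the paper: unfold the definition of the equilibrate proximal operator, absorb the linear perturbation into the quadratic by completing the square, and recognize the result as the proximal operator with shifted input. Your aside about $-\langle\bm{c},\mathcal{S}^{-1}\bm{z}_1\rangle$ versus $-\langle\bm{c},\bm{z}_1\rangle$ is a sharp catch --- the paper indeed writes the latter without comment, so the lemma as literally stated needs the convention you adopted --- but modulo that notational point the two arguments coincide line by line.
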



Now, we are ready to provide a fixed-point characterization and establish convergence.
\begin{prop}\label{fpi}
\eqref{R-ADMM} admits the following fixed-point characterization:
\begin{equation}\label{fpi_admm}
\bm{\zeta}^{k+1} = \mathcal{F}_\text{ADMM} \,\, \bm{\zeta}^{k},
\end{equation}
with
\begin{equation}\label{fp}
\bm{\zeta}^{k+1} 
=\,\,   \mathcal S\bm{A}\bm{x}^{k+1} +  (\mathcal{S}^*)^{-1} \bm{\lambda}^{k}, 
\qquad \mathcal{F}_{ADMM} =  \frac{1}{2}(2\textbf{Prox}_{\bar{f}(\mathcal S\bm{A})^{-1}} - \mathcal{I})\circ(2\textbf{Prox}_{ \widehat{g}(\mathcal S\bm{B})^{-1}} - \mathcal{I}) + \frac{1}{2}\mathcal I ,
\end{equation}
where 
\begin{equation}\label{pert}
\bar{f}(\bm{x}) \eqdef f(\bm{x}) - \langle \mathcal S\bm{c}, \bm{x} \rangle , 
\qquad \widehat{g}(\bm{z}) \eqdef g(\bm{z}) + \langle \mathcal S\bm{c}, \bm{z} \rangle. 
\end{equation}
The sequence $ \{\bm{\zeta}^{k}\} $ converges to a fixed-point $ \bm{\zeta}^\star \in \text{Fix}\,\, \mathcal{F}_\text{ADMM}  $ (if it exists).
\end{prop}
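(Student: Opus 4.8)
The goal is to show that the iterates \eqref{scaled_form} (equivalently \eqref{R-ADMM} after the substitution \eqref{sub}) are the Douglas--Rachford/ADMM fixed-point iteration driven by the operator $\mathcal{F}_\text{ADMM}$ in \eqref{fp}, and then to invoke Lemma \ref{lem_rate}. I would proceed in four steps. First, rewrite the $\widetilde{\bm{x}}$- and $\widetilde{\bm{z}}$-updates using Lemma \ref{post_comp}: the linear perturbations $-\langle\mathcal{S}\bm{c},\cdot\rangle$ and $+\langle\mathcal{S}\bm{c},\cdot\rangle$ in $\bar f,\widehat g$ are exactly what absorbs the $\pm\widetilde{\bm c}$ shifts appearing inside the proximal arguments, so that
\begin{align*}
\widetilde{\bm{x}}^{k+1} &= \textbf{Prox}_{\bar f(\mathcal S\bm A)^{-1}}\big(\widetilde{\bm{z}}^k-\widetilde{\bm{\lambda}}^k\big),\\
\widetilde{\bm{z}}^{k+1} &= \textbf{Prox}_{\widehat g(\mathcal S\bm B)^{-1}}\big(\widetilde{\bm{x}}^{k+1}+\widetilde{\bm{\lambda}}^k\big).
\end{align*}
(One must check the sign bookkeeping carefully, since $h=f-\langle\bm c,\cdot\rangle$ in the lemma must be matched to $\mathcal S\bm c$ here; this is where the $(\mathcal S\bm A)^{-1}$ versus $\mathcal S$ distinction in the translation rule, Lemma \ref{lem_rel2}, must be respected.)

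\textbf{Step 2: identify the DRS variable.} Define $\bm{\zeta}^k = \widetilde{\bm{z}}^k + \widetilde{\bm{\lambda}}^k$ (noting that $\widetilde{\bm{z}}^k=\mathcal S\bm B\bm z^k$ is \emph{not} the same as the quantity in \eqref{fp}, but that at a fixed point $\widetilde{\bm{x}}^\star=\widetilde{\bm{z}}^\star+\widetilde{\bm{c}}$ reconciles the two; I would either work with this $\bm\zeta$ and translate at the end, or equivalently track $\bm\zeta^k=\mathcal S\bm A\bm x^{k+1}+\widetilde{\bm\lambda}^k$ directly). Then express $\widetilde{\bm{x}}^{k+1}$ as a reflected resolvent: $\widetilde{\bm{x}}^{k+1}=\textbf{Prox}_{\bar f(\mathcal S\bm A)^{-1}}(\bm\zeta^k - 2\widetilde{\bm\lambda}^k)$-type manipulations, combined with the $\widetilde{\bm\lambda}$-update $\widetilde{\bm\lambda}^{k+1}=\widetilde{\bm\lambda}^k+\widetilde{\bm x}^{k+1}-\widetilde{\bm z}^{k+1}-\widetilde{\bm c}$, should collapse to
$\bm\zeta^{k+1} = \tfrac12\bm\zeta^k + \tfrac12 R_{\bar f(\mathcal S\bm A)^{-1}}R_{\widehat g(\mathcal S\bm B)^{-1}}\bm\zeta^k$,
which is exactly $\mathcal{F}_\text{ADMM}\bm\zeta^k$. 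This is the standard ADMM$\Leftrightarrow$DRS algebra (cf. \cite{eckstein1989splitting}, \cite[sec.~3.1]{ryu2022large}), but carried out natively in the equilibrate parametrization, so no $\gamma$-rescaling bookkeeping is needed.

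\textbf{Step 3: convergence.} By Proposition \ref{prop_firm2} (extended to injective maps in Section \ref{sec_inj}), both $\textbf{Prox}_{\bar f(\mathcal S\bm A)^{-1}}$ and $\textbf{Prox}_{\widehat g(\mathcal S\bm B)^{-1}}$ are firmly nonexpansive and single-valued; hence by \eqref{rel00} their reflected resolvents are nonexpansive, the composition of two nonexpansive maps is nonexpansive, and $\mathcal{F}_\text{ADMM}=\tfrac12\mathcal I+\tfrac12(\text{nonexpansive})$ is $\tfrac12$-averaged. If $\text{Fix}\,\mathcal{F}_\text{ADMM}\neq\emptyset$, Lemma \ref{lem_rate} with $\theta=\tfrac12$ gives $\bm\zeta^k\to\bm\zeta^\star\in\text{Fix}\,\mathcal{F}_\text{ADMM}$. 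Finally, I would note that the three iterate sequences $\widetilde{\bm x}^k,\widetilde{\bm z}^k,\widetilde{\bm\lambda}^k$ are recovered from $\bm\zeta^k$ by single-valued continuous (prox) maps, so their convergence follows, and the perturbed functions $\bar f,\widehat g$ are in $\Gamma_0$ because adding a continuous linear functional preserves closedness, convexity and properness.

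\textbf{Main obstacle.} The routine-but-delicate part is the sign/shift bookkeeping in Steps 1--2: matching the $\widetilde{\bm c}$ translations to the linear perturbations in $\bar f$ and $\widehat g$, and correctly relating the "natural" fixed-point variable $\widetilde{\bm z}^k+\widetilde{\bm\lambda}^k$ to the stated $\bm\zeta^k=\mathcal S\bm A\bm x^{k+1}+(\mathcal S^*)^{-1}\bm\lambda^k$. A secondary subtlety is that $\mathcal S\bm A$ and $\mathcal S\bm B$ are merely injective, not bijective, so every invocation of "$\textbf{Prox}$ is single-valued / firmly nonexpansive" and of the reflected-resolvent identities must be justified via the injective-parameter extension of Section \ref{sec_inj} rather than the bijective Proposition \ref{prop_firm2} — in particular one should confirm $(\mathcal S\bm A)(\mathcal S\bm A)^*$ and $(\mathcal S\bm B)(\mathcal S\bm B)^*$ are invertible, which holds exactly because $\bm A,\bm B$ have full column rank and $\mathcal S$ is bijective.
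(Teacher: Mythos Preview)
Your proposal is correct and follows essentially the same route as the paper. The paper works directly with $\bm{\zeta}^{k+1}=\widetilde{\bm x}^{k+1}+\widetilde{\bm\lambda}^{k}$, carries out the DRS algebra with the $\widetilde{\bm c}$-shifts still present, and only at the final line invokes Lemma~\ref{post_comp} to absorb them into $\bar f,\widehat g$; you apply Lemma~\ref{post_comp} up front and then do the DRS reduction, which is merely a reordering of the same computation. Your tentative variable $\widetilde{\bm z}^k+\widetilde{\bm\lambda}^k$ differs from the paper's $\widetilde{\bm x}^{k+1}+\widetilde{\bm\lambda}^k$ by the constant $\widetilde{\bm c}$ (since $\widetilde{\bm\lambda}^k=\widetilde{\bm\lambda}^{k-1}+\widetilde{\bm x}^k-\widetilde{\bm z}^k-\widetilde{\bm c}$), so the residual $\widetilde{\bm c}$ in the $\lambda$-update would not vanish under your first choice; but you already anticipate this and offer the paper's variable as the alternative, and the convergence argument via Proposition~\ref{prop_firm2}/Section~\ref{sec_inj} and Lemma~\ref{lem_rate} is identical.
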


\begin{proof}
	For light of notation, we use \eqref{scaled_form}, with 
	\begin{equation}
	\widetilde{\bm{x}}^{k} \eqdef \mathcal S\bm{A}\bm{x}^{k}, \qquad \widetilde{\bm{z}}^{k} \eqdef \mathcal S\bm{B}\bm{z}^{k}, \qquad \widetilde{\bm{\lambda}}^{k} \eqdef (\mathcal{S}^*)^{-1}  \bm{\lambda}^{k}
	\qquad \widetilde{\bm{c}} \eqdef\mathcal S \bm{c}.
	\end{equation}
	In view of the  $ \bm{\lambda} $-update, we have
	\begin{align}
	\widetilde{\bm{\lambda}}^{k}
	 =\,\,  &\widetilde{\bm{\lambda}}^{k-1} + \widetilde{\bm{x}}^{k} - \widetilde{\bm{z}}^{k} - \widetilde{\bm{c}} \nonumber\\
	 =\,\,  & \bm{\zeta}^{k}  - \widetilde{\bm{z}}^{k} - \widetilde{\bm{c}}. 
	\end{align}
In view of \eqref{scaled_form}, we obtain
	\begin{align}\label{pri_fpi}
\bm{\zeta}^{k+1} 
=\,\,  &  \widetilde{\bm{x}}^{k+1} + \widetilde{\bm{\lambda}}^{k} \nonumber\\
=\,\,  & \textbf{Prox}_{f  (\mathcal S\bm{A})^{-1}} (2\widetilde{\bm{c}} +  2 \widetilde{\bm{z}}^{k}  - \bm{\zeta}^{k}) + \bm{\zeta}^{k+1}  - \widetilde{\bm{z}}^{k+1} - \widetilde{\bm{c}} , \nonumber\\
=\,\,  & \textbf{Prox}_{ f (\mathcal S\bm{A})^{-1}} \bigg(\widetilde{\bm{c}} + 2\textbf{Prox}_{ g(\mathcal S\bm{B})^{-1}}(\bm{\zeta}^{k} - \widetilde{\bm{c}} )- (\bm{\zeta}^{k} - \widetilde{\bm{c}} ) \bigg)   + (\bm{\zeta}^{k} - \widetilde{\bm{c}}) - \textbf{Prox}_{ g(\mathcal S\bm{B})^{-1}}(\bm{\zeta}^{k} - \widetilde{\bm{c}} ), \nonumber\\
=\,\,  & \textbf{Prox}_{  {f}(\mathcal S\bm{A})^{-1} }\circ \bigg(\,\widetilde{\bm{c}} + \big(2\textbf{Prox}_{ {g}(\mathcal S\bm{B})^{-1}} - \mathcal{I}\big)   (\bm{\zeta}^{k} - \widetilde{\bm{c}} )\bigg) - \frac{1}{2} \bigg( \,\widetilde{\bm{c}} +
\big( 2\textbf{Prox}_{ {g}(\mathcal S\bm{B})^{-1}} - \mathcal{I}\big)  (\bm{\zeta}^{k} - \widetilde{\bm{c}} ) \bigg)  + \frac{1}{2}\bm{\zeta}^{k}, \nonumber\\
=\,\,  & \left( \frac{1}{2}(2\textbf{Prox}_{\bar{f}(\mathcal S\bm{A})^{-1}} - \mathcal{I})\circ(2\textbf{Prox}_{ \widehat{g}(\mathcal S\bm{B})^{-1}} - \mathcal{I}) + \frac{1}{2}\mathcal I   \right) \,   \bm{\zeta}^{k},
\end{align}
where the last line  uses Lemma \ref{post_comp}.
Invoking the substitution definitions gives  the  original fixed-point expression:
\begin{equation}\label{ADMM_fix}
\bm{\zeta}^{k+1} 
=\,\,   \mathcal S\bm{A}\bm{x}^{k+1} +  (\mathcal{S}^*)^{-1} \bm{\lambda}^{k}.
\end{equation} 
At last, the operator $ \mathcal{F}_{ADMM}$ is 1/2-averaged, and therefore firmly non-expansive, recall \eqref{rel00}. It follows that the iterates converge to a fixed-point $ \bm{\zeta}^\star \in Fix\,\, \mathcal{F}_{ADMM} $ (if it exists). The proof is now concluded.
\end{proof}

\subsection{Self-duality}\label{admm_self}
Recall  from Sec. \ref{self_duality} that the self-duality  does not hold for the classical parametrization.  Here, using  the  \textit{equilibrate parametrization},  the self-duality naturally holds.

To show this,
recall the equilibrate Moreau identity as in \eqref{Moreau identity}:
\begin{equation}
\bm{v}  =\,\textbf{Prox}_{f\mathcal{S}^{-1}} (\bm{v}) +  \textbf{Prox}_{f^*{\mathcal{S}^*}} (\bm{v}),
\end{equation}
with $ f  \in \Gamma_0 (\mathscr H) $, and  $ \mathcal{S} \in \mathfrak B (\mathscr H) $ being bijective.
It follows that
\begin{align}
&  \quad \big(\mathcal I - \textbf{Prox}_{f\mathcal{S}^{-1}}\big) \,\,\bm{v} =   \quad\textbf{Prox}_{f^*{\mathcal{S}^*}} \,\,(\bm{v})\nonumber\\
\iff
&  \quad \big(\mathcal I - \textbf{Prox}_{f\mathcal{S}^{-1}}\big) \,\,\bm{v} =   -\,\,\textbf{Prox}_{f^*\mathcal{S}^*(-\mathcal{I})} \,\,(-\bm{v})\nonumber\\
\iff
&\,\,\,   \big(2\textbf{Prox}_{f\mathcal{S}^{-1}} - \mathcal I\big) \,\,\bm{v} =   \quad\big(2\textbf{Prox}_{f^*{\mathcal{S}^*}(-\mathcal{I})} - \mathcal I \big)   \,\,(-\bm{v}).
\end{align}
Similarly, given $ g  \in \Gamma_0 (\mathscr H) $,
\begin{align}
 &  \quad\,\, \big(\mathcal I - \textbf{Prox}_{g\mathcal{S}^{-1}}\big) \,\,\,\bm{v} =  \,\,\textbf{Prox}_{g^*{\mathcal{S}^*}} \,\,(\bm{v})\nonumber\\
 \iff
 &  \quad\, \big(\mathcal I - 2\textbf{Prox}_{g\mathcal{S}^{-1}}\big) \,\,\bm{v} =   \big(\textbf{Prox}_{g^*{\mathcal{S}^*}}  - \textbf{Prox}_{g\mathcal{S}^{-1}}\big) \,\,(\bm{v})\nonumber\\
 \iff
 & - \big(2\textbf{Prox}_{g\mathcal{S}^{-1}} - \mathcal I\big) \,\,\bm{v} =   \big(2\textbf{Prox}_{g^*\mathcal{S}^*} - \mathcal I \big)   \,\,\bm{v}.
\end{align}
Using the above two relations,  we obtain
\begin{equation}
\underbrace{(2\textbf{Prox}_{\bar{f}(\mathcal S\bm{A})^{-1}} - \mathcal{I})\circ(2\textbf{Prox}_{ \widehat{g}(\mathcal S\bm{B})^{-1}} - \mathcal{I})}_{\mathcal{F}_\text{pri}} 
= \underbrace{(2\textbf{Prox}_{\bar{f}^*\bm{A}^T\mathcal S^*(-\mathcal{I})} - \mathcal{I})\circ(2\textbf{Prox}_{ \widehat{g}^*\bm{B}^T\mathcal S^*} - \mathcal{I})}_{\mathcal{F}_\text{dual}} .
\end{equation}
Recall  from Sec. \ref{self_d}, the above implies that the self-duality holds. Therefore, the primal  fixed-point (see Proposition \ref{fpi}) coincides with the  dual one, being
the unscaled fixed-point:
\begin{equation}\label{un_fp}
\bm{\zeta}^\star
=\,\,   \mathcal S\bm{A}\bm{x}^\star +  (\mathcal{S}^*)^{-1} \bm{\lambda}^\star.
\end{equation}


\section{Equivalence: preconditioning \& metric selection}\label{sec_frame}
In the previous section, we  apply \textit{equilibrate  parametrization}  to the ADMM algorithmic framework. 
Here, we apply it to the optimization problem  itself.  We will obtain some intrinsic structure insights.
Quite remarkably, the  preconditioning techniques  and the step-size/metric parameter selection  are  shown to be  equivalent. Specifically,

\vspace{6pt}
%

$\bullet$\, Constraint preconditioning  is equivalent to selecting  the variable metric; 

\vspace{6pt}
$\bullet$\, Scaling the objective function is equivalent to  selecting the scalar step-size. 

\vspace{6pt}

Due to the scalar step-size can be absorbed into the variable metric,  the above essentially reduces to a single issue, 
the metric selection.





\subsection{A unified view}
We will show that, owing to the \textit{equilibrate  parametrization}, the (ADMM type) primal and dual problems can be rewritten into a completely symmetric form.

First, we  recall the classical characterizations.
Consider the following primal problem:
\begin{align}\label{pri1}
\underset{\bm x,\bm z}{\text{minimize}}\quad&   f(\bm x) + g(\bm z)\quad  \nonumber\\
\text{subject\,to}\quad&   \bm{A}\bm x - \bm{B}\bm z =  \bm c. \tag{standard}
\end{align} 
with the Lagrange function
\begin{align}\label{or_L}
\mathcal{L}(\bm{x},\bm{z},\bm{\lambda}) 
\,=\,	&  f(\bm{x}) + g (\bm{z})  + \langle   \bm{\lambda}, \, \bm{A}\bm{x} - \bm{B}\bm{z} - \bm{c}  \rangle.
\end{align}
The Fenchel dual problem  is well-known to be
\begin{equation}\label{dual1}
\underset{\bm\lambda}{\text{minimize}}\quad     f ^* (-\bm{A}^T\bm\lambda)	+   g^*(\bm{B}^T\bm\lambda) 	+ \langle\bm\lambda,\, \bm c \rangle.
\end{equation}
From above, 
we see that \eqref{pri1}  contains two variables and one  constraint, while the dual \eqref{dual1} admits only one variable and is unconstrained. Their connections are not  clear under the current characterizations.
We are therefore motivated  to propose a new one, which emphasizes the  underlying symmetry.
\begin{prop}[unified primal-dual problems]\label{prop_uni_pd}
	The standard  primal problem \eqref{pri1} admits the following  unified    characterizations:
	\begin{align}
	&\underset{\widetilde{\bm x}}{\text{minimize}}\qquad\quad\qquad\widetilde{f} \quad \big(\widetilde{\bm x}\big) \,\,\, + \,\, \widetilde{g}\,\,\, \big( \widetilde{\bm x} - \widetilde{\bm c} \big) ,   \tag{Primal}\\
	&\underset{\widetilde{\bm\lambda}}{\text{minimize}}\quad    \bigg(\widetilde{f}\circ(-\mathcal{I})\bigg)^*\big(\widetilde{\bm\lambda}\big)\,\,	+  \,\,  \widetilde{g}^*\, \big(\widetilde{\bm\lambda} - \widetilde{\bm c}\big) ,  \tag{Dual}
	\end{align}
For the \textit{equilibrate parametrization}, with $ \mathcal S  \in \mathfrak B (\mathscr H) $ being bijective, the  following variable substitutions are used:
\begin{align}\label{equ_para}
\,\,&(\text{Function}) \qquad\qquad	\widetilde{f} =   f \circ (\mathcal{S}\bm{A})^{-1}, \qquad\quad  \widetilde{g} =   g \circ (\mathcal{S}\bm{B})^{-1},\nonumber\\
\,\,&(\text{Variable} ) \qquad\qquad \,\,	\widetilde{\bm x}=  \mathcal{S}\bm{A}\, \bm  x, \qquad\qquad\quad\,\,    \widetilde{\bm\lambda} = (\mathcal{S}^*)^{-1}\,\bm\lambda,  \qquad\quad \widetilde{\bm c} = \mathcal{S}\bm c.
\end{align}
For the classical  left-parametrization,  with scalar $\gamma>0$, the  following variable substitutions are used:
\begin{align}\label{class_va}
&(\text{Function})\qquad \qquad	\widetilde{f} =   \frac{1}{\gamma}\, f \circ \bm{A}^{-1}, \qquad\quad\,\,\,  \widetilde{g} =  \frac{1}{\gamma}\,  g \circ \bm{B}^{-1},\nonumber\\
&(\text{Variable} )\qquad\qquad  \,\,	\widetilde{\bm x}=  \bm{A}\, \bm  x, \qquad\qquad\qquad    \widetilde{\bm\lambda} = \frac{1}{\gamma}\,\bm\lambda,   \qquad\qquad\quad \widetilde{\bm c} = \bm c.
\end{align}

Moreover, \eqref{pri1} admit the following monotone inclusion characterizations:
\begin{align}
&\text{find} \,\, \widetilde{\bm x}\in \mathscr H \quad \text{such that}\:\: 	0 \in \,\,\mathcal{A}\quad(\widetilde{\bm x} )    +\,\, \mathcal{B}\,\,\,\, (\widetilde{\bm x} - \widetilde{\bm c} ), \nonumber\\
&\text{find} \,\, \widetilde{\bm\lambda}\in \mathscr H \quad \text{such that}\:\:   		0 \in {\mathcal{A'}}^{-1} (\widetilde{\bm\lambda} ) + {\mathcal{B}}^{-1} (\widetilde{\bm\lambda}- \widetilde{\bm c} ),  
\end{align}
where
\begin{equation}
\mathcal{A} = \partial \widetilde{f},\qquad   \mathcal{B}  = \partial \widetilde{g},
\end{equation}
with functions $\widetilde{f}, \widetilde{g} $ and variables $ \widetilde{\bm x}, \widetilde{\bm\lambda}, \widetilde{\bm c} $ defined in \eqref{equ_para} for \textit{equilibrate parametrization},  and \eqref{class_va} for classical parametrization.
\end{prop}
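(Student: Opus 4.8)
\emph{Setup and Primal form.} The plan is to obtain the \emph{Primal}/\emph{Dual} pair, the monotone-inclusion form, and the admissibility of both substitution schemes by transporting the classical data \eqref{pri1}, \eqref{or_L}, \eqref{dual1} through the stated change of variables. I would take the equilibrate scheme \eqref{equ_para} with $\mathcal{S}\in\mathfrak{B}(\mathscr{H})$ bijective (and its injective relaxation from Section~\ref{sec_inj}) as the generic case, recovering \eqref{class_va} at the end by the analogous scalar computation ($\mathcal{S}=\mathcal{I}$, objective scaled by $1/\gamma$, dual variable rescaled by $1/\gamma$), which reproduces the objective scaling of \eqref{precon}. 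First I would eliminate the constraint: applying $\mathcal{S}$ to $\bm{A}\bm{x}-\bm{B}\bm{z}=\bm{c}$ and setting $\widetilde{\bm{x}}=\mathcal{S}\bm{A}\bm{x}$, $\widetilde{\bm{z}}=\mathcal{S}\bm{B}\bm{z}$, $\widetilde{\bm{c}}=\mathcal{S}\bm{c}$ turns it into $\widetilde{\bm{x}}-\widetilde{\bm{z}}=\widetilde{\bm{c}}$, so $\widetilde{\bm{z}}=\widetilde{\bm{x}}-\widetilde{\bm{c}}$; since $\mathcal{S}\bm{A}$ and $\mathcal{S}\bm{B}$ are injective, $f(\bm{x})=\widetilde{f}(\widetilde{\bm{x}})$ and $g(\bm{z})=\widetilde{g}(\widetilde{\bm{z}})$ with $\widetilde{f},\widetilde{g}$ as in \eqref{equ_para}, and substituting $\widetilde{\bm{z}}=\widetilde{\bm{x}}-\widetilde{\bm{c}}$ gives the \emph{Primal} problem. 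It is equivalent to \eqref{pri1} since $\bm{x}\mapsto\widetilde{\bm{x}}$ is a bijection between the feasible set of \eqref{pri1} and the effective domain of the \emph{Primal} objective, so the minimizers correspond.

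\emph{Dual form.} I would push \eqref{dual1} through $\widetilde{\bm\lambda}=(\mathcal{S}^{*})^{-1}\bm\lambda$. From the definitions in \eqref{equ_para}, $\widetilde{f}^{*}=f^{*}\circ(\mathcal{S}\bm{A})^{*}$ and $\widetilde{g}^{*}=g^{*}\circ(\mathcal{S}\bm{B})^{*}$, so $f^{*}(-\bm{A}^{T}\bm\lambda)=f^{*}\big(-(\mathcal{S}\bm{A})^{*}\widetilde{\bm\lambda}\big)=\big(\widetilde{f}\circ(-\mathcal{I})\big)^{*}(\widetilde{\bm\lambda})$ and $g^{*}(\bm{B}^{T}\bm\lambda)=\widetilde{g}^{*}(\widetilde{\bm\lambda})$; also $\langle\bm\lambda,\bm{c}\rangle=\langle\widetilde{\bm\lambda},\widetilde{\bm{c}}\rangle$. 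It then remains to absorb the leftover affine term $\langle\widetilde{\bm\lambda},\widetilde{\bm{c}}\rangle$ into $\widetilde{g}^{*}$ so that it becomes the single shifted term $\widetilde{g}^{*}(\widetilde{\bm\lambda}-\widetilde{\bm{c}})$ -- i.e.\ to realize the identity $\widetilde{g}^{*}(\widetilde{\bm\lambda})+\langle\widetilde{\bm\lambda},\widetilde{\bm{c}}\rangle=\widetilde{g}^{*}(\widetilde{\bm\lambda}-\widetilde{\bm{c}})$ under the relevant normalization -- which I would do by first routing the affine data into the $g$-block as a function perturbation in the spirit of \eqref{pert} and then dualizing the perturbed problem, using the conjugate-translation rule $\phi^{*}(\cdot-\widetilde{\bm{c}})=\big(\phi+\langle\widetilde{\bm{c}},\cdot\rangle\big)^{*}$.

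\emph{Monotone inclusions.} I would apply Fermat's rule to each problem. For the primal, $0\in\partial\big(\widetilde{f}(\cdot)+\widetilde{g}(\cdot-\widetilde{\bm{c}})\big)(\widetilde{\bm{x}})$; since $\widetilde{f}^{*}=f^{*}\circ(\mathcal{S}\bm{A})^{*}$ with $(\mathcal{S}\bm{A})^{*}$ surjective, the chain rule (Lemma~\ref{dom_ran}) together with Lemma~\ref{coro_maxm} makes $\partial\widetilde{f}=(\partial\widetilde{f}^{*})^{-1}$ maximal monotone (and likewise $\partial\widetilde{g}$), so the subdifferential sum rule and $\partial\big(\widetilde{g}(\cdot-\widetilde{\bm{c}})\big)=(\partial\widetilde{g})(\cdot-\widetilde{\bm{c}})$ give $0\in\mathcal{A}(\widetilde{\bm{x}})+\mathcal{B}(\widetilde{\bm{x}}-\widetilde{\bm{c}})$ with $\mathcal{A}=\partial\widetilde{f}$, $\mathcal{B}=\partial\widetilde{g}$. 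For the dual, the same computation with $\partial(\phi^{*})=(\partial\phi)^{-1}$ and the Attouch--Th\'era relation \eqref{at_dual}, namely $\partial\big((\widetilde{f}\circ(-\mathcal{I}))^{*}\big)=\big((-\mathcal{I})\circ\partial\widetilde{f}\circ(-\mathcal{I})\big)^{-1}={\mathcal{A'}}^{-1}$, yields $0\in{\mathcal{A'}}^{-1}(\widetilde{\bm\lambda})+\mathcal{B}^{-1}(\widetilde{\bm\lambda}-\widetilde{\bm{c}})$; the match between each inclusion and its minimization form is Fermat's rule once more, and the classical scheme \eqref{class_va} follows by rerunning everything with $\mathcal{S}=\mathcal{I}$ and the $1/\gamma$ scaling.

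\emph{Main obstacle.} The non-routine content lies in the \emph{Dual} step: getting the affine datum $\widetilde{\bm{c}}$ to enter the dual as the \emph{same} shift $\widetilde{\bm\lambda}-\widetilde{\bm{c}}$ that appears in the primal, rather than as the leftover additive term $\langle\widetilde{\bm\lambda},\widetilde{\bm{c}}\rangle$ produced by a bare dualization, is exactly what makes the primal and dual problems structurally identical and is the substance of the claim; verifying that the perturbation device of \eqref{pert} produces precisely this recombination is the delicate point. A secondary technical issue is that, $\bm{A}$ and $\bm{B}$ being merely injective, $\mathcal{S}\bm{A}$ is not bijective, so the subdifferential calculus must be routed through the surjective adjoint $(\mathcal{S}\bm{A})^{*}$ -- via the relative-interior conditions of Lemmas~\ref{lem1} and~\ref{dom_ran} and the injective-parameter results of Section~\ref{sec_inj} -- rather than through the partial inverse $(\mathcal{S}\bm{A})^{-1}$.
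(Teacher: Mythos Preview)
Your approach is essentially the same as the paper's: both transform the primal via the substitutions $\widetilde{\bm x}=\mathcal S\bm A\bm x$, $\widetilde{\bm z}=\mathcal S\bm B\bm z$, $\widetilde{\bm c}=\mathcal S\bm c$, and both rely on the conjugate-translation lemma $\big(\phi+\langle\widetilde{\bm c},\cdot\rangle\big)^*=\phi^*(\cdot-\widetilde{\bm c})$ to absorb the affine term into the shifted argument of $\widetilde g^*$. The one methodological difference is in how the dual is reached: the paper rewrites the Lagrangian \eqref{or_L} in the new variables (using $\langle\bm\lambda,\bm A\bm x-\bm B\bm z-\bm c\rangle=\langle(\mathcal S^*)^{-1}\bm\lambda,\mathcal S(\bm A\bm x-\bm B\bm z-\bm c)\rangle$) and then computes the Fenchel dual from scratch, whereas you push the already-known dual \eqref{dual1} through the substitution $\widetilde{\bm\lambda}=(\mathcal S^*)^{-1}\bm\lambda$ using the identity $\widetilde f^{\,*}=f^*\circ(\mathcal S\bm A)^*$. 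The paper's route sidesteps the need to justify that conjugate identity when $\mathcal S\bm A$ is merely injective---a technicality you yourself flag as a secondary obstacle---so it is slightly cleaner; on the other hand, your treatment of the monotone-inclusion characterizations via Fermat's rule and \eqref{at_dual} is more explicit than the paper's, which leaves that part implicit. The classical case \eqref{class_va} is handled identically in both, by rerunning the computation with the $1/\gamma$ objective scaling.
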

\begin{proof}
\eqref{equ_para} is shown in the next section, and \eqref{class_va}  is shown in Sec. \ref{sec_c}.
\end{proof}

\begin{remark}
For the classical parametrization, we are  limited to discussing the scalar parameter case, due to `$\mathcal{M}\circ  f$' is not well-defined   
\end{remark}

\subsubsection{Equilibrate parametrization}
Here, we derive the result   \eqref{equ_para}, which reveals that:
preconditioning the  constraint in \eqref{pri1} is equivalent to selecting the variable metric.

Let $ \mathcal S  \in \mathfrak B (\mathscr H) $ be bijective.  \eqref{pri1} can be rewritten into
\begin{align}\label{pro_right}
&\underset{\bm x,\bm z}{\text{minimize}}\quad   f(\bm x) + g(\bm z) \qquad\qquad\qquad\qquad\quad\,\,\,\,\quad\,\, \text{subject\,to}\quad  \bm{A}\bm x - \bm{B}\bm z =  \bm c  \nonumber\\
\iff
&\underset{\bm x,\bm z}{\text{minimize}}\quad    f(\mathcal{S}\bm{A})^{-1}(\mathcal{S}\bm{A}\bm x) + g(\mathcal{S}\bm{B})^{-1}(\mathcal{S}\bm{B}\bm z) \quad \text{subject\,to}\quad  \mathcal{S}\bm{A}\bm x - \mathcal{S}\bm{B}\bm z = \mathcal{S}\bm c\nonumber\\
\iff
&\underset{\widetilde{\bm x},\widetilde{\bm z}}{\text{minimize}}\quad    \widetilde{f}(\widetilde{\bm x}) + \widetilde{g}(\widetilde{\bm z}) \qquad\qquad\qquad\qquad\qquad\quad \text{subject\,to}\quad   \widetilde{\bm x} - \widetilde{\bm z} = \widetilde{\bm c} .
\end{align} 
with variable substitutions:
\begin{equation}
\widetilde{\bm x} =  \mathcal{S}\bm{A}\bm x, \quad  \widetilde{\bm z} =  \mathcal{S}\bm{B}\bm z , \quad \widetilde{\bm c} = \mathcal{S}\bm c,
\end{equation}
and function substitutions:
\begin{equation}
\widetilde{f} = f\circ(\mathcal{S}\bm{A})^{-1}, \quad \widetilde{g} = g\circ(\mathcal{S}\bm{B})^{-1}.
\end{equation}
The Lagrangian \eqref{or_L} can be rewritten into 
\begin{align}
\mathcal{L}(\bm{x},\bm{z},\bm{\lambda}) 
\,=\,\,	&  f(\bm{x}) + g (\bm{z})  + \langle   \bm{\lambda}, \, \bm{A}\bm{x} - \bm{B}\bm{z} - \bm{c}  \rangle   \nonumber\\
\,=\,\,	&  f(\bm{x}) + g (\bm{z})  + \big\langle   (\mathcal{S}^*)^{-1}  \bm{\lambda}, \, \mathcal{S} \big( \bm{A}\bm{x} - \bm{B}\bm{z} - \bm{c}  \big)\big\rangle  \nonumber\\
\,=\,\,	& \underbrace{\,\widetilde{f}(\widetilde{\bm{x}}) + \widetilde g (\widetilde{\bm{z}})  + \langle \widetilde  {\bm\lambda}, \,  \widetilde{\bm{x} }- \widetilde{\bm{z}} - \widetilde{\bm c}\rangle\,}_{
	\eqdef \,\,\, \widetilde{\mathcal{L}}(\widetilde{\bm x}, \widetilde{\bm z}, \widetilde{\bm\lambda})} ,
\end{align}
with a dual-variable substitution:
\begin{equation}
\widetilde  {\bm\lambda} \eqdef  (\mathcal{S}^*)^{-1}  \bm{\lambda}.
\end{equation}
The Fenchel dual problem is  found via
\begin{align}\label{new_dual}
 \underset{\bm\lambda}{\text{maximize}} \,\,\, \underset{{\bm x}, {\bm z}}{\inf}\,\, \mathcal{L}({\bm x}, {\bm z}, \bm\lambda) 
=\,\, & \underset{\widetilde{\bm\lambda}}{\text{maximize}} \,\,\, \underset{\widetilde{\bm x}, \widetilde{\bm z}}{\inf}\,\, \widetilde{\mathcal{L}}(\widetilde{\bm x}, \widetilde{\bm z}, \widetilde{\bm\lambda}) \nonumber\\
=\,\, &  \underset{\widetilde{\bm\lambda}}{\text{maximize}} \,\, 
- \bigg( \underset{\widetilde{\bm x}}{\sup}\, \biggl\{\langle -\widetilde{\bm\lambda}, \widetilde{\bm x} \rangle - \widetilde{f}(  \widetilde{\bm x} ) \biggr\}  	
+\underset{\widetilde{\bm z}}{\sup}\, \biggl\{ \langle \widetilde{\bm\lambda}, \widetilde{\bm z} \rangle - \widetilde{g}   ( \widetilde{\bm z}  )\biggr\}
+ \langle\widetilde{\bm\lambda},\, \widetilde {\bm c} \rangle 		\bigg) \nonumber\\
=\,\, &   \underset{\widetilde{\bm\lambda}}{\text{maximize}} \,\, 
- \bigg(  \widetilde{f} ^* (-\widetilde{\bm\lambda})	+   \widetilde{g}^*(\widetilde{\bm\lambda}) 	+ \langle\widetilde{\bm\lambda},\, \widetilde{\bm c} \rangle  \bigg),
\end{align}
which can be written into a minimization form
\begin{equation}
\underset{\widetilde{\bm\lambda}}{\text{minimize}} \,\,\,\,  \widetilde{f} ^* (-\widetilde{\bm\lambda})	+   \widetilde{g}^*(\widetilde{\bm\lambda}) 	+ \langle\widetilde{\bm\lambda},\, \widetilde{\bm c} \rangle .
\end{equation}
Invoking the following  lemma gives the result \eqref{equ_para} in Proposition \ref{prop_uni_pd}.
\begin{lem}
	Given  function $ g  \in \Gamma_0(\mathscr  H) $, denote a post-composition as 	$ h (\cdot ) \eqdef g (\cdot) +  \langle \bm{c}, \cdot \rangle$. Then,
	\begin{equation}
	h^* (\cdot) = g^*(\cdot - \bm{c})
	\end{equation}
\end{lem}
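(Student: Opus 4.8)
The plan is to compute the Fenchel conjugate of $h$ directly from the definition and recognize the result as a shifted conjugate of $g$. First I would note that since $g \in \Gamma_0(\mathscr{H})$ and $\langle \bm{c}, \cdot \rangle$ is a continuous linear functional, the sum $h = g + \langle \bm{c}, \cdot \rangle$ is again convex, closed, and proper, so $h^*$ is well-defined and the manipulation below is legitimate.

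Next I would expand the definition $h^*(\bm{y}) = \sup_{\bm{z}} \big( \langle \bm{y}, \bm{z} \rangle - h(\bm{z}) \big)$, substitute $h(\bm{z}) = g(\bm{z}) + \langle \bm{c}, \bm{z} \rangle$, and collect the two linear terms:
\begin{equation}
h^*(\bm{y}) = \sup_{\bm{z}} \Big( \langle \bm{y}, \bm{z} \rangle - g(\bm{z}) - \langle \bm{c}, \bm{z} \rangle \Big) = \sup_{\bm{z}} \Big( \langle \bm{y} - \bm{c}, \bm{z} \rangle - g(\bm{z}) \Big) = g^*(\bm{y} - \bm{c}).
\end{equation}
The last equality is just the definition of $g^*$ evaluated at the point $\bm{y} - \bm{c}$. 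Since $\bm{y}$ was arbitrary this gives $h^* = g^* \circ (\mathcal{I} - \bm{c})$, i.e. $h^*(\cdot) = g^*(\cdot - \bm{c})$, as claimed.

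There is essentially no obstacle here — the only point worth a half-sentence of care is the well-posedness remark (that $h \in \Gamma_0(\mathscr{H})$ so the supremum defining $h^*$ behaves as expected and is not identically $+\infty$), but even that is immediate because translating by a linear functional preserves membership in $\Gamma_0(\mathscr{H})$. I would then close by observing that this lemma, applied with $g \rightsquigarrow \widetilde{g}$ and $\bm{c} \rightsquigarrow \widetilde{\bm{c}}$, converts the term $\widetilde{g}^*(\widetilde{\bm\lambda}) + \langle \widetilde{\bm\lambda}, \widetilde{\bm c}\rangle$ in \eqref{new_dual} into $\widetilde{g}^*(\widetilde{\bm\lambda} - \widetilde{\bm c})$, which is exactly the \textbf{Dual} form asserted in Proposition \ref{prop_uni_pd}, thereby completing the derivation of \eqref{equ_para}.
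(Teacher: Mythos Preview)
Your proof is correct and is essentially identical to the paper's own argument: both simply expand the Fenchel conjugate definition, combine the two inner products into $\langle \bm{y}-\bm{c}, \bm{z}\rangle$, and recognize the result as $g^*(\bm{y}-\bm{c})$. Your extra remarks on well-posedness and on how the lemma feeds into Proposition~\ref{prop_uni_pd} are helpful context but do not change the substance of the proof.
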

\begin{proof}
	By the Fenchel conjugate definition, 
	\begin{equation}
	h^* (\bm v) = \underset{\bm{z}}{\sup}\, \langle \bm{z}, \bm v \rangle - g (\bm{z}) - \langle \bm{c}, \bm{z} \rangle  
	= \underset{\bm{z}}{\sup}\, \langle \bm{z}, \bm v - \bm{c}\rangle - g (\bm{z}) =  g^* (\bm{v} - \bm{ c}),
	\end{equation}	
	which concludes the proof.
\end{proof}

\begin{remark}[constraint preconditioning]
	In view of \eqref{pro_right}, we see that the ADMM constraint becomes 
	\begin{equation}
	\widetilde{\bm x} - \widetilde{\bm z} = \widetilde{\bm c} \,\, \iff\,\,  \mathcal{S}\big(\bm{A}\bm x - \bm{B}\bm z - \bm c\big)   = 0 . 
	\end{equation}
	This  coincides with a preconditioning on the constraint, recall the preconditioning  from \eqref{precon}. That said,  preconditioning the ADMM constraint is equivalent to  selecting $ \mathcal{S} $. 
\end{remark}

\subsubsection{Classical parametrization}\label{sec_c}
Here, we derive the result   \eqref{class_va}, and the process reveals that:
scaling the objective function is equivalent to  selecting scalar step-size.

Let $\gamma>0$.  \eqref{pri1} can be rewritten into
\begin{align}\label{for0}
&\underset{\bm x,\bm z}{\text{minimize}}\quad   f(\bm x) + g(\bm z)\qquad\qquad\qquad\qquad\quad\text{subject\,to}\quad  \bm{A}\bm x - \bm{B}\bm z =  \bm c  \nonumber\\
\iff
&\underset{\bm x,\bm z}{\text{minimize}}\quad    \frac{1}{\gamma}\biggl( f\bm{A}^{-1}(\bm{A}\bm x) +  g\bm{B}^{-1}(\bm{B}\bm z)\biggr) \quad \text{subject\,to}\quad  \bm{A}\bm x -\bm{B}\bm z = \bm c\nonumber\\
\iff
&\underset{\widetilde{\bm x},\widetilde{\bm z}}{\text{minimize}}\quad    \widetilde{f}(\widetilde{\bm x}) + \widetilde{g}(\widetilde{\bm z}) \qquad\qquad\qquad\qquad\quad\, \text{subject\,to}\quad   \widetilde{\bm x} - \widetilde{\bm z} = {\bm c} .
\end{align}
with variable substitutions:
\begin{equation}
\widetilde{\bm x} =  \bm{A}\bm x, \quad  \widetilde{\bm z} = \bm{B}\bm z 
\end{equation}
and function substitutions:
\begin{equation}
\widetilde{f} =  \frac{1}{\gamma}f\circ\bm{A}^{-1}, \quad \widetilde{g} = \frac{1}{\gamma}g\circ\bm{B}^{-1}.
\end{equation}
The Lagrangian \eqref{or_L} can be rewritten into 
\begin{align}
\mathcal{L}(\bm{x},\bm{z},\bm{\lambda}) 
\,=\,\,	&  f(\bm{x}) + g (\bm{z})  + \langle   \bm{\lambda}, \, \bm{A}\bm{x} - \bm{B}\bm{z} - \bm{c}  \rangle   \nonumber\\
\,=\,\,	&  \gamma\cdot \frac{1}{\gamma}\biggl(f(\bm{x}) + g (\bm{z})  + \langle   \bm{\lambda}, \, \bm{A}\bm{x} - \bm{B}\bm{z} - \bm{c}  \rangle   \biggr)\nonumber\\
\,=\,\,	&  \gamma\,\, \underbrace{\biggl(\widetilde{f}(\widetilde{\bm{x}}) + \widetilde g (\widetilde{\bm{z}})  + \langle \widetilde  {\bm\lambda}, \,  \widetilde{\bm{x} }- \widetilde{\bm{z}} - \bm c\rangle\biggr)}_{\eqdef \,\,
\widetilde{\mathcal{L}}(\widetilde{\bm x}, \widetilde{\bm z}, \widetilde{\bm\lambda})}
\end{align}
with a  dual-variable substitution:
\begin{equation}
\widetilde  {\bm\lambda} \eqdef  \frac{1}{\gamma} \bm{\lambda}.
\end{equation}
The Fenchel dual problem is  found via
\begin{align}
\underset{\bm\lambda}{\text{maximize}} \,\, \underset{{\bm x}, {\bm z}}{\inf}\,\, \mathcal{L}({\bm x}, {\bm z}, \bm\lambda) 
=\,\, & \underset{\widetilde{\bm\lambda}}{\text{maximize}} \,\, \underset{\widetilde{\bm x}, \widetilde{\bm z}}{\inf}\,\,\,\gamma\, \widetilde{\mathcal{L}}(\widetilde{\bm x}, \widetilde{\bm z}, \widetilde{\bm\lambda}) \nonumber\\
=\,\, & \underset{\widetilde{\bm\lambda}}{\text{maximize}} \,\, \underset{\widetilde{\bm x}, \widetilde{\bm z}}{\inf}\,\,\,\widetilde{\mathcal{L}}(\widetilde{\bm x}, \widetilde{\bm z}, \widetilde{\bm\lambda}).
\end{align}
The rest procedure  follows exactly the same as  in  \eqref{new_dual}, which yields \eqref{class_va}.

\begin{remark}[data/function scaling]
In view of \eqref{for0}, we see that the ADMM objective function is scaled into
\begin{equation}
\frac{1}{\gamma}\biggl( f(\bm x) + g(\bm z) \biggr)
\end{equation}
This  coincides with the function preconditioning, recall   from \eqref{precon}. 
\end{remark}

\section{Optimal metric/preconditioner selection}\label{sec_opt_met}
In the previous section, we establish the equivalence between the metric selection and preconditioning. Here, we will find their optimal choice via our \textit{equilibrate upper-bound minimization} technique. One can employ any unscaled fixed-point in principle. Here, we demonstrate the ADMM case.

\subsection{Optimization problem}\label{sec_i} 
The optimal  metric selection problem  is straightforward by invoking our \textit{equilibrate upper-bound minimization} technique. Here, we present an explicit expression for the ADMM case, with  the unscaled fixed-point established in \eqref{un_fp}.
\begin{align}\label{to_opt}
 &	\underset{\mathcal S  }{\text{minimize}}\,\,  	 \Vert \mathcal S\bm{A}\bm{x}^\star +  (\mathcal{S}^*)^{-1} \bm{\lambda}^\star  - \bm{\zeta}^0  \Vert^2 \nonumber\\
=&  \underset{\mathcal S  }{\text{minimize}}\,\,  	 \Vert \mathcal S\bm{Ax}^\star\Vert^2  +  \Vert (\mathcal{S}^*)^{-1} \bm{\lambda}^\star    \Vert^2 - 2 \langle\mathcal S\bm{A}\bm{x}^\star, \bm{\zeta}^0 \rangle -    2 \langle(\mathcal{S}^*)^{-1} \bm{\lambda}^\star,  \bm{\zeta}^0 \rangle
 ,  \tag{opt.  sel.}
\end{align}
with $ \mathcal S  \in \mathfrak B (\mathscr H) $  being bijective, where $  \bm{\zeta}^0 $   is an arbitrary initialization. 

The above problem  is  not  readily solvable, due to 
operator $ \mathcal S $ has too much degree of freedom. 
To proceed, we need to specify $ \mathcal S  $,  i.e., a construction issue.
Recall our goal is to improve efficiency. Then, ideally, the construction procedure  should satisfy that

\vspace{6pt}

$\bullet$ (i)  Algorithm iterates admit closed-form expressions;
\vspace{6pt}

$\bullet$  (ii) Problem  \eqref{to_opt}  admits a closed-form solution.
\vspace{6pt}

In the following part, we will propose an ideal construction tailored to $l_1$-minimization. Quite remarkably, we prove a one-iteration convergence property,  which in turn  verifies its optimality and also the power of  our \textit{equilibrate upper-bound minimization} technique. 


\subsection{Tailored  construction for $ l_1  $-norm}
For the sake of clarity, we start with the basic  case. The same arguments are applicable  to a  more  general  formulation,  discussed at the end of the section. 
 \begin{align}\label{basic0}
 \underset{\bm{x}}{\text{minimize}}\quad   f(\bm{x}) + \alpha\Vert \bm{x} \Vert_1,  \tag{basic}
 \end{align}
 with  $ f \in \Gamma_0 (\mathscr H)  $  being smooth, and  $\alpha > 0$ being a   regularization parameter.
 It can be rewritten into the following ADMM form:
\begin{align}\label{l1}
&\underset{\bm{x},\bm{z}}{\text{minimize}}\quad   f(\bm{x}) + \alpha\Vert \bm{z} \Vert_1 \nonumber\\
&\text{subject\,to}\quad \,\, \bm{x} = \bm{z}.\tag{$ l_1 $-norm}
\end{align}
The  augmented Lagrangian in a metric environment $\mathscr H_\mathcal{M}$  is 
\begin{equation}\label{lagr}
\mathcal{L}_\mathcal{M}(\bm{x},\bm{z},\bm{\lambda}) 
\,=\,  f(\bm{x}) + \alpha \Vert   \bm{z} \Vert_1 + \frac{1}{2}\Vert  \bm{x} - \bm{z}  + \mathcal{M}^{-1}\bm{\lambda}\Vert^2_\mathcal{M},  
\end{equation}

\subsubsection{Closed-form evaluation}
First, we address the ADMM iterates closed-form issue, as in Sec. \ref{sec_i}. This is  challenging  due to the non-smoothness of the  $l_1  $-norm function. In the literature, the following problem is considered no closed-form solution available in a non-Euclidean space:
\begin{equation}
\underset{\bm{x}}{\text{argmin}} \,\, \Vert \bm{x} \Vert_1 + \frac{1}{2}\Vert\bm{x} - \bm{v} \Vert^2_\mathcal{M}. 
\end{equation}
On one hand, this is true if $ \mathcal{M} $ is an arbitrary (positive definite) operator. On the other hand, we will show that there  exists  a special class of metrics, which turns out surprisingly powerful.

To start, we need two lemmas.
\begin{lem}
	The sub-differential operator of $l_1$-norm can be characterized as
	\begin{equation}\label{sub_cha}
	\partial \Vert \bm{x} \Vert_1 = \{ \text{sgn}(\bm{x}) + \bm{w} \,\vert\,	\bm{x}\odot \bm{w} = 0, \,\Vert \bm{w} \Vert_\infty \leq 1 \}, 
	\end{equation}	
	where by $ \odot $ we denote the Hadamard/element-wise product,  by  `sgn' the following sign function:
	\begin{equation}
	\text{sgn}(a) 
	= \begin{cases}
	-1  &   a < 0, \\
	0 & 	a = 0, \\
	1 & 	a > 0.
	\end{cases} 
	\end{equation}
\end{lem}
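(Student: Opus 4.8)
The plan is to exploit the coordinate-wise separability of the $l_1$-norm and reduce to the scalar case. Writing $\Vert \bm{x}\Vert_1 = \sum_{i=1}^n |x_i|$ as a sum of functions each depending on a single coordinate, the subdifferential factorizes as a Cartesian product, $\partial \Vert \bm{x}\Vert_1 = \partial|x_1| \times \cdots \times \partial|x_n|$ (the subdifferential of a separable sum over disjoint coordinate blocks is the product of the blockwise subdifferentials; see \cite{bauschke2017convex}). So it suffices to compute $\partial|t|$ for a single real variable and then reassemble.

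For the scalar case I would argue directly from the subgradient inequality $u(s-t) + |t| \le |s|$ for all $s \in \mathbb{R}$. Away from the origin $|\cdot|$ is differentiable, giving $\partial|t| = \{\text{sgn}(t)\}$ when $t \ne 0$; at $t = 0$, testing $s = 1$, $s = -1$, and $s = 0$ shows $\partial|0| = [-1,1]$. Assembling over coordinates, a vector $\bm{u}$ lies in $\partial\Vert\bm{x}\Vert_1$ exactly when $u_i = \text{sgn}(x_i)$ for every index $i$ with $x_i \ne 0$, and $u_i \in [-1,1]$ for every index $i$ with $x_i = 0$.

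The remaining step is to recast this description in the stated form by the substitution $\bm{w} \eqdef \bm{u} - \text{sgn}(\bm{x})$, and to check it is a bijection onto $\{\,\bm{w} \mid \bm{x}\odot\bm{w} = 0,\ \Vert\bm{w}\Vert_\infty \le 1\,\}$. On the support of $\bm{x}$ one has $u_i = \text{sgn}(x_i)$, hence $w_i = 0$ and trivially $x_i w_i = 0$; off the support $\text{sgn}(x_i) = 0$, so $w_i = u_i$, again $x_i w_i = 0$, and the box constraint $|u_i| \le 1$ translates to $|w_i| \le 1$. Conversely, any $\bm{w}$ with $\bm{x}\odot\bm{w} = 0$ forces $w_i = 0$ on the support (recovering $u_i = \text{sgn}(x_i)$ there), while $\Vert\bm{w}\Vert_\infty \le 1$ gives $|u_i| = |w_i| \le 1$ off the support; hence $\text{sgn}(\bm{x}) + \bm{w} \in \partial\Vert\bm{x}\Vert_1$. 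Since each step is an equivalence, the two sets coincide.

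There is no genuine obstacle in this argument; the only point needing care is the coordinatewise case split in the reparametrization — recognizing that the constraint $\bm{x}\odot\bm{w} = 0$ is precisely the condition "$w_i = 0$ on the support of $\bm{x}$" and nothing weaker, so that it exactly encodes the forced value $u_i = \text{sgn}(x_i)$ there while leaving $w_i$ free in $[-1,1]$ on the zero set of $\bm{x}$.
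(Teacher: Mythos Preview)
Your proof is correct and takes a genuinely different route from the paper's. The paper invokes the general dual-norm characterization $\partial\Vert\bm{x}\Vert = \{\bm{v}\mid \langle\bm{v},\bm{x}\rangle = \Vert\bm{x}\Vert,\ \Vert\bm{v}\Vert_*\le 1\}$ and then verifies that every vector of the form $\text{sgn}(\bm{x})+\bm{w}$ with $\bm{x}\odot\bm{w}=0$, $\Vert\bm{w}\Vert_\infty\le 1$ satisfies these two conditions; as written it checks only the inclusion $\{\text{sgn}(\bm{x})+\bm{w}\}\subseteq\partial\Vert\bm{x}\Vert_1$ and does not argue the reverse. You instead exploit the coordinate separability of $\Vert\cdot\Vert_1$, compute $\partial|t|$ directly from the subgradient inequality, and then exhibit an explicit bijection $\bm{w}=\bm{u}-\text{sgn}(\bm{x})$ between the two descriptions, checking both directions coordinatewise. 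Your argument is more elementary (no appeal to the dual-norm formula), self-contained, and actually establishes set equality rather than one containment; the paper's approach has the advantage of situating the result inside a general norm framework, but at the cost of leaving the converse implicit.
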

\begin{proof}
	Recall the following general characterization for any norm:
	\begin{equation}\label{ge_1}
	\partial \Vert \bm{x} \Vert = \{  \bm{v} \,\vert\,	\langle \bm{v} , \, \bm{x}   \rangle	 = \Vert \bm{x} \Vert, \, \Vert \bm{v} \Vert_* \leq 1 \},
	\end{equation}	
	where $ \Vert \cdot \Vert_* $ denotes the dual norm, which corresponds to $ \Vert \cdot \Vert_\infty $ in our case.
	We aim to show that our characterization satisfies this  general condition. 
	
	Let $ \bm{v} =  \text{sgn}(\bm{x}) + \bm{w}$, and we obtain 
	\begin{equation}
	\langle \bm{v} , \, \bm{x}   \rangle = \langle \text{sgn}(\bm{x}) + \bm{w} , \, \bm{x}   \rangle = \langle \text{sgn}(\bm{x})  , \, \bm{x}   \rangle = \Vert \bm{x} \Vert_1. 
	\end{equation}
	Also, we will have
	\begin{equation}
	\Vert \bm{v} \Vert_\infty = \Vert \text{sgn}(\bm{x}) + \bm{w} \Vert_\infty \leq 1, 
	\end{equation}
	due to that 
	\begin{equation}
	\Vert \text{sgn}(\bm{x}) \Vert_\infty \leq 1,\quad  \Vert\bm{w} \Vert_\infty \leq 1 ,\quad \bm{x}\odot \bm{w} = 0.
	\end{equation}	
	The proof is now concluded.
\end{proof}	
\begin{lem}\label{fact}
	The following relation holds:
\begin{equation}
\text{sgn}(\bm{x}) = \text{sgn}(\bm{z}) \quad \iff \quad \partial \Vert \bm{x} \Vert_1 = \partial \Vert \bm{z} \Vert_1 .
\end{equation}
\end{lem}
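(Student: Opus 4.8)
The plan is to read everything off the explicit characterization \eqref{sub_cha}, after noticing that the set on its right-hand side is in fact a Cartesian product of intervals whose shape encodes exactly $\text{sgn}(\bm{x})$. The constraint $\bm{x}\odot\bm{w}=0$ decouples coordinatewise into $x_iw_i=0$, and $\Vert\bm{w}\Vert_\infty\le 1$ decouples into $|w_i|\le 1$, so I would first rewrite \eqref{sub_cha} in the box form
\begin{equation}\label{eq_box}
\partial\Vert\bm{x}\Vert_1 \;=\; \prod_{i=1}^{n} I_i(\bm{x}),
\qquad
I_i(\bm{x}) =
\begin{cases}
\{\text{sgn}(x_i)\}, & x_i\neq 0,\\[2pt]
[-1,1], & x_i = 0.
\end{cases}
\end{equation}
This product description is the crux of the argument; everything afterwards is bookkeeping.

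For the forward implication, assume $\text{sgn}(\bm{x})=\text{sgn}(\bm{z})$. Then for each coordinate $i$ one has $x_i=0 \iff \text{sgn}(x_i)=0 \iff \text{sgn}(z_i)=0 \iff z_i=0$, so the zero patterns of $\bm{x}$ and $\bm{z}$ coincide; and on the non-zero coordinates $\text{sgn}(x_i)=\text{sgn}(z_i)$ by hypothesis. Hence $I_i(\bm{x})=I_i(\bm{z})$ for every $i$, and \eqref{eq_box} gives $\partial\Vert\bm{x}\Vert_1=\partial\Vert\bm{z}\Vert_1$.

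For the converse, assume $\partial\Vert\bm{x}\Vert_1=\partial\Vert\bm{z}\Vert_1$. Projecting the product set in \eqref{eq_box} onto its $i$-th coordinate recovers $I_i(\bm{x})$ on one side and $I_i(\bm{z})$ on the other, so $I_i(\bm{x})=I_i(\bm{z})$ for each $i$. Since a singleton contained in $\{-1,+1\}$ can never equal the interval $[-1,1]$, equality of the two factors forces one of two cases: either both factors equal $[-1,1]$, whence $x_i=z_i=0$ and $\text{sgn}(x_i)=0=\text{sgn}(z_i)$; or both factors are the \emph{same} singleton $\{s\}$ with $s\in\{-1,+1\}$, whence $\text{sgn}(x_i)=s=\text{sgn}(z_i)$. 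Either way $\text{sgn}(x_i)=\text{sgn}(z_i)$, and since $i$ is arbitrary we conclude $\text{sgn}(\bm{x})=\text{sgn}(\bm{z})$. The only point that needs genuine care is the passage to the box form \eqref{eq_box} — i.e. checking that the constraints in \eqref{sub_cha} really do separate across coordinates — and once that is in place both directions are immediate.
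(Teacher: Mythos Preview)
Your proof is correct and follows essentially the same approach as the paper: both arguments decompose the subdifferential characterization \eqref{sub_cha} coordinatewise into singletons $\{\text{sgn}(x_i)\}$ on the support and the interval $[-1,1]$ off the support, then read off the equivalence. Your version is in fact slightly more complete, since you prove both directions of the biconditional explicitly, whereas the paper's proof only spells out the forward implication $\text{sgn}(\bm{x})=\text{sgn}(\bm{z})\Rightarrow\partial\Vert\bm{x}\Vert_1=\partial\Vert\bm{z}\Vert_1$ (which is the only direction actually used downstream in Theorem~\ref{prop_evaluation}).
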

\begin{proof}
	Recall the the sub-differential characterization as in \eqref{sub_cha} 
	\begin{equation}
	\partial \Vert \bm{x} \Vert_1 = \{ \text{sgn}(\bm{x}) + \bm{w} \,\vert\,	\bm{x}\odot \bm{w} = 0, \,\Vert \bm{w} \Vert_\infty \leq 1 \}, 
	\end{equation}

	First, consider the collection of all non-zero elements of $ \bm{x} $, denoted by $ \{ x_j \,|\,  x_j \neq 0,\, \forall j\} $. By $ 	\bm{x}\odot \bm{w} = 0 $, we have the corresponding elements of  $ \bm{w}  $, i.e., $ w_j = 0, \forall j $. Then
	Then, the above can be written as
	\begin{equation}
	\text{sgn}(x_j)  = \partial \Vert x_j \Vert_1 ,
	\end{equation}
	and hence if $ \text{sgn}(x_j) = \text{sgn}(z_j) $, then $  \partial \Vert x_j \Vert_1 =  \partial \Vert z_j \Vert_1 $.
	
	Now, consider the collection of all zero elements of  $ \bm{x} $, denoted by $ \{ x_l \,|\,  x_l = 0, \, \forall l \} $. Then, 
	\begin{equation}
	\partial \Vert x_l \Vert_1 = [-1, 1],
	\end{equation}
	which is the same for any zero input, i.e., if $ \text{sgn}(x_l) = \text{sgn}(z_l) $, then
	\begin{equation}
		\partial \Vert x_l \Vert_1 = [-1, 1] = \partial \Vert z_l \Vert_1. 
	\end{equation}
	The proof is therefore concluded.
\end{proof}

Now, we are ready to present the closed-form evaluation result for a certain
 class of   metrics.
	\begin{theo}\label{prop_evaluation}
	Consider a variable metric space $\mathscr H_\mathcal{M}$,  with norm $ \Vert\cdot \Vert_\mathcal{M} $ induced by the inner product $ \langle \cdot, \,  \cdot \rangle_\mathcal{M} = \langle \cdot, \,  \mathcal{M}\,\cdot \rangle  $.
	Suppose the variable metric $ \mathcal{M} $ admits the following element-wise characterization:
	\begin{equation}\label{assu1}
	\mathcal{M}^{-1} (\bm{v}) = \bm{m} \odot \bm{v},
	\end{equation}
	with $ \bm{m} \in \mathscr R_{++} $ an arbitrary positive vector.	
	Then, 
	\begin{equation}\label{prox_l1}
	\mathcal{M}^{-1}\circ\mathcal{T}\circ\mathcal{M} \,\,  (\cdot) 
	=  \underset{\bm{x}}{\text{argmin}} \,\, \Vert \bm{x} \Vert_1 + \frac{1}{2}\Vert\bm{x} - \cdot \Vert^2_\mathcal{M}  ,
	\end{equation}
	where  $ \mathcal{T}  $ denotes the  soft-thresholding operator:
	\begin{equation}\label{soft}
	\mathcal{T} (\bm{u}) \eqdef \text{sgn}(\bm{u}) \odot \big\{|\bm{u}| -\bm{1}\big\}_+, 
	\end{equation}	
	 by  $ |\cdot| $  the absolute value, by $ \bm{1} $ the ones vector, by $ \{\cdot\}_+ $  the projection onto the non-negative orthant.
\end{theo}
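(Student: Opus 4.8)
The plan is to verify the first-order optimality condition for the proximal minimization on the right-hand side of \eqref{prox_l1} and to show that the candidate point $\bm{x}^\star \eqdef \mathcal{M}^{-1}\circ\mathcal{T}\circ\mathcal{M}(\bm{v})$ satisfies it. Since $\mathcal{M}$ is positive definite, the functional $\bm{x}\mapsto \Vert\bm{x}\Vert_1 + \frac{1}{2}\Vert\bm{x}-\bm{v}\Vert^2_\mathcal{M}$ is strictly convex, so its minimizer is unique; differentiating the quadratic term (with $\mathcal{M}$ symmetric) and invoking subdifferential calculus, $\bm{x}^\star$ is that minimizer if and only if
\begin{equation}
\mathcal{M}(\bm{v}-\bm{x}^\star) \in \partial\Vert\bm{x}^\star\Vert_1 .
\end{equation}
Thus the whole proof reduces to checking this single inclusion for $\bm{x}^\star = \mathcal{M}^{-1}\mathcal{T}(\mathcal{M}\bm{v})$, after which uniqueness finishes the job.

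First I would set $\bm{u}\eqdef\mathcal{M}\bm{v}$ and recall the classical Euclidean fact that the soft-thresholding operator with threshold $\bm{1}$ is the proximal operator of $\Vert\cdot\Vert_1$, i.e. $\mathcal{T}=\textbf{Prox}_{\Vert\cdot\Vert_1}=J_{\partial\Vert\cdot\Vert_1}$, which in subdifferential form reads $\bm{u}-\mathcal{T}(\bm{u})\in\partial\Vert\mathcal{T}(\bm{u})\Vert_1$. Next, using hypothesis \eqref{assu1} — that $\mathcal{M}^{-1}$, and hence $\mathcal{M}$, acts by a strictly positive element-wise rescaling — I compute
\begin{equation}
\mathcal{M}(\bm{v}-\bm{x}^\star) = \bm{u} - \mathcal{M}\mathcal{M}^{-1}\mathcal{T}(\bm{u}) = \bm{u} - \mathcal{T}(\bm{u}) \in \partial\Vert\mathcal{T}(\bm{u})\Vert_1 .
\end{equation}
So it remains only to identify $\partial\Vert\mathcal{T}(\bm{u})\Vert_1$ with $\partial\Vert\bm{x}^\star\Vert_1 = \partial\Vert\mathcal{M}^{-1}\mathcal{T}(\bm{u})\Vert_1$.

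That last identification is exactly what Lemma \ref{fact} provides: $\partial\Vert\bm{a}\Vert_1=\partial\Vert\bm{b}\Vert_1$ whenever $\text{sgn}(\bm{a})=\text{sgn}(\bm{b})$. Since $\mathcal{M}^{-1}(\bm{w})=\bm{m}\odot\bm{w}$ with $\bm{m}\in\mathscr R_{++}$, coordinate-wise multiplication by a strictly positive vector leaves every sign (zeros included) unchanged, so $\text{sgn}\big(\mathcal{M}^{-1}\mathcal{T}(\bm{u})\big)=\text{sgn}\big(\mathcal{T}(\bm{u})\big)$; by Lemma \ref{fact} the two subdifferentials coincide. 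Plugging this back yields $\mathcal{M}(\bm{v}-\bm{x}^\star)\in\partial\Vert\bm{x}^\star\Vert_1$, i.e. the optimality condition, and uniqueness identifies $\bm{x}^\star$ with the stated argmin.

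I expect the only delicate point to be the bookkeeping at the zero coordinates, where $\partial\Vert\cdot\Vert_1$ is the full set $[-1,1]$ rather than a singleton, so the sign-preservation step must be argued at the level of the subdifferential \emph{sets} (through the characterization \eqref{sub_cha} and Lemma \ref{fact}) rather than pointwise derivatives; this is also precisely where the element-wise hypothesis \eqref{assu1} is indispensable, since for a general positive-definite $\mathcal{M}$ the conjugation $\mathcal{M}^{-1}\circ\mathcal{T}\circ\mathcal{M}$ is neither a (weighted) soft-thresholding nor the correct proximal map. As a fallback I would keep the fully element-wise route: diagonalize via \eqref{assu1}, solve each scalar inclusion $(v_i-x_i)/m_i\in\partial|x_i|$ to obtain the weighted soft-threshold $x_i^\star=\text{sgn}(v_i)\{|v_i|-m_i\}_+$, and check directly that $\mathcal{M}^{-1}\mathcal{T}(\mathcal{M}\bm{v})$ reproduces these coordinates; but I would present the subdifferential argument above as the main proof, as it is shorter and reuses the two lemmas just established.
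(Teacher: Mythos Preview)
Your proposal is correct and follows essentially the same strategy as the paper: write the first-order optimality condition $\mathcal{M}(\bm{v}-\bm{x}^\star)\in\partial\Vert\bm{x}^\star\Vert_1$, identify the left-hand side with $\bm{u}-\mathcal{T}(\bm{u})$ for $\bm{u}=\mathcal{M}\bm{v}$, and then use Lemma~\ref{fact} (sign preservation under positive diagonal scaling) to transfer the subdifferential from $\mathcal{T}(\bm{u})$ to $\mathcal{M}^{-1}\mathcal{T}(\bm{u})$. The only cosmetic difference is that the paper does not quote $\mathcal{T}=\textbf{Prox}_{\Vert\cdot\Vert_1}$ as a black box; instead it splits $\mathcal{M}\bm{v}$ into its above-threshold part $\bm{y}_1$ and below-threshold part $\bm{y}_2$ and verifies by hand that $\mathcal{M}\bm{v}-\mathcal{T}\mathcal{M}\bm{v}=\text{sgn}(\bm{y}_1)+\bm{y}_2\in\partial\Vert\bm{y}_1\Vert_1$, after which it checks $\text{sgn}(\bm{y}_1)=\text{sgn}(\mathcal{M}^{-1}\mathcal{T}\mathcal{M}\bm{v})$ and invokes Lemma~\ref{fact} exactly as you do.
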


\begin{proof}
Our goal is to show  \eqref{prox_l1}.  By the first-order optimality condition, it can be rewritten into
\begin{equation}\label{1_norm}
\mathcal{M}\,(\bm{v} ) -\mathcal{T}\mathcal{M} \,(\bm{v} )  \in \partial\Vert\mathcal{M}^{-1}\circ\mathcal{T}\circ\mathcal{M} \,\,(\bm{v} ) \Vert_1. \tag{goal}
\end{equation}

To start,  define the following orthogonal decomposition:
\begin{equation}
\mathcal{M}\,(\bm{v} )  =  \underbrace{\big\{ \mathcal{M}\bm{v}  \big\}_{\Vert\cdot\Vert_\infty>1}}_{\bm{y}_1 }  
+ \underbrace{\big\{ \mathcal{M}\bm{v}  \big\}_{\Vert\cdot\Vert_\infty\leq 1}}_{\bm{y}_2 }  
\eqdef
\bm{y}_1 + \bm{y}_2,
\end{equation}
which implies
\begin{equation}\label{prop}
\Vert\bm{y}_1\Vert_\infty > 1, 
\quad \Vert\bm{y}_2\Vert_\infty \leq 1, 
\quad\bm{y}_1 \odot \bm{y}_2 = \bm{0}. \tag{properties}
\end{equation}
Moreover, by definition \eqref{soft} of the  soft-thresholding operator  $ \mathcal{T} $, we have
\begin{equation}
\mathcal{T}(\bm{y}_2) = 0.
\end{equation}
It follows that
\begin{equation}
\mathcal{T}\circ \mathcal{M}\,(\bm{v} ) \,=\, \mathcal{T}\,(\bm{y}_1 + \bm{y}_2 ) \,=\,  \text{sgn}(\bm{y}_1) \odot (|\bm{y}_1| -\bm{1})
\,=\,  \bm{y}_1  - \text{sgn}(\bm{y}_1).
\end{equation}
In view of \eqref{1_norm}, its  left-hand side can now be rewritten into
\begin{equation}\label{rec_1}
\mathcal{M}\,(\bm{v} ) -\mathcal{T}\mathcal{M} \,(\bm{v} ) \,=\, \bm{y}_1 + \bm{y}_2  - \big(\bm{y}_1  - \text{sgn}(\bm{y}_1) \big)
\,=\, \text{sgn}(\bm{y}_1) + \bm{y}_2,
\end{equation}
which is an element of the set $ \partial \Vert  \bm{y}_1 \Vert_1 $,  i.e.,
\begin{equation}
\mathcal{M}\,(\bm{v} ) -\mathcal{T}\mathcal{M} \,(\bm{v} ) = \text{sgn}(\bm{y}_1) + \bm{y}_2  \in \partial \Vert \bm{y}_1 \Vert_1,
\end{equation}
which holds  owing to the sub-differential characterization in \eqref{sub_cha}, and \eqref{prop}.
Comparing  the above to \eqref{1_norm}, all what left is to show 
\begin{equation}
\partial \Vert \bm{y}_1 \Vert_1 = \partial \Vert 	\mathcal{M}^{-1}\circ\mathcal{T}\circ\mathcal{M} \,\,(\bm{v} )  \Vert_1 = \partial \Vert \bm{m}\odot  \big(\bm{y}_1  - \text{sgn}(\bm{y}_1) \big) \Vert_1. \tag{goal 2}
\end{equation}
where the last equality uses the special metric \eqref{assu1}. 
Owing to Lemma \ref{fact}, this is equivalent to showing 
\begin{equation}\label{to_s1}
\text{sgn}(\bm{y}_1) = \text{sgn}\bigg(   	\bm{m}\odot  \big(\bm{y}_1  - \text{sgn}(\bm{y}_1) \big)  \bigg).
\end{equation}
To show this, from its right-hand side, we have
\begin{align}
\text{sgn}\bigg(   	\bm{m}\odot  \big(\bm{y}_1  - \text{sgn}(\bm{y}_1) \big)  \bigg)
&= \text{sgn}\bigg( \bm{m}\odot \text{sgn}(\bm{y}_1) \odot (|\bm{y}_1| - \bm{1})\bigg)  \nonumber\\
&= \text{sgn}(\bm{y}_1) \odot \text{sgn}\bigg( \bm{m}\odot  (|\bm{y}_1| - \bm{1})\bigg) \nonumber\\
&= \text{sgn}(\bm{y}_1),
\end{align}
where the last line holds due to $ (|\bm{y}_1| - \bm{1}) $  and $ \bm{m} $ are positive vectors, which implies
\begin{equation}
\text{sgn}\bigg( \bm{m}\odot  (|\bm{y}_1| - \bm{1})\bigg) = \bm{1}.
\end{equation}
The proof is therefore concluded.
\end{proof}

\subsection{Optimal choice}
In the above content, we construct a  class of metrics to guarantee closed-form  iterates. Here, we would apply  them to  the problem  \eqref{to_opt}.  
We will obtain  a  closed-form optimal choice.
Moreover,  the  ultimate one-iteration convergence will be achieved under zero initialization. We  therefore limit our discussion  to  zero initialization, since other initializations will bring additional, unnecessary  complications, see formulation \eqref{to_opt}.

	\begin{prop}\label{prop_matrix_step}
		Consider a variable metric space $\mathscr H_\mathcal{M}$ with $ \mathcal{S} $ a decomposed metric as in Lemma \ref{lem_emb},  i.e.,  $ \langle \cdot, \,  \mathcal{M}\,\cdot \rangle = \langle \mathcal{S}\, \cdot, \,  \mathcal{S}\,\cdot \rangle $.
			Suppose metric $ \mathcal{M} $ admits the following element-wise characterization:
		\begin{equation}\label{metri_op}
		\mathcal{M}^{-1} (\bm{v}) = \bm{m} \odot \bm{v}.
		\end{equation}
			with $ \bm{m} \in \mathscr R_{++} $ an arbitrary positive vector.
		
	Then, under zero-initialization, the optimal choice can be generally written as
	\begin{equation}\label{metric_op}
	\mathcal{M^\star}^{-1} = \text{abs}\bigg(  {\bm{x}^\star} \oslash {\bm{\lambda}^\star}   \bigg) \, \iff \, \mathcal{M^\star}= \text{abs}\bigg(  {\bm{\lambda}^\star} \oslash {\bm{x}^\star}   \bigg),
	\end{equation}
	where  by $ \oslash $ we denote the Hadamard/element-wise division, 
	by `$ \text{abs} $' the element-wise absolute value ,  by $ \star $ the optimal solution.
	Particularly, if the $ i$-th  element $  x_i^\star = 0 $,  we choose
	\begin{equation}
	\text{abs}\bigg({x_i^\star }/{\lambda_i^\star} \bigg)  \downarrow 0, \qquad  \text{abs}\bigg({\lambda_i^\star}/{x_i^\star } \bigg)  \rightarrow + \infty	 .
	\end{equation}
	Furthermore, if $x_i^\star =  \lambda_i^\star = 0$, then all feasible (positive) choices are equivalent.
\end{prop}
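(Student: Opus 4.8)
The plan is to specialize the optimal-selection problem \eqref{to_opt} to the \eqref{l1} instance, reduce it to a \emph{separable} scalar minimization over the entries of the metric vector $\bm m$, solve each scalar problem using the KKT \emph{sign} structure of the $l_1$ program, and then read off \eqref{metric_op} together with the two degenerate rules; the one-iteration claim then falls out by checking the optimality condition of the first ADMM subproblem. First, for \eqref{l1} we have $\bm A=\bm B=\mathcal I$ and $\bm c=0$, so the unscaled fixed point \eqref{un_fp} reduces to $\bm\zeta^\star=\mathcal S\bm x^\star+(\mathcal S^*)^{-1}\bm\lambda^\star$. Within the class of metrics $\mathcal M^{-1}(\bm v)=\bm m\odot\bm v$ required by Theorem \ref{prop_evaluation}, the decomposed metric may be taken diagonal, $\mathcal S(\bm v)=(\bm 1\oslash\sqrt{\bm m})\odot\bm v$ and $(\mathcal S^*)^{-1}(\bm v)=\sqrt{\bm m}\odot\bm v$ (any other admissible $\mathcal S$ with $\mathcal S^*\mathcal S=\mathcal M$ differs from this one by a left orthogonal factor, which does not change the Euclidean norm below). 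Hence, under zero initialization $\bm\zeta^0=0$, the objective of \eqref{to_opt} becomes the separable sum
\begin{equation}
\|\bm\zeta^\star\|^2=\sum_i\Big(\tfrac{x_i^\star}{\sqrt{m_i}}+\sqrt{m_i}\,\lambda_i^\star\Big)^2,
\end{equation}
so it suffices to minimize each summand over $m_i>0$.

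Next I would extract the sign structure from optimality. The KKT conditions of \eqref{l1} (equivalently \eqref{basic0}) give $\nabla f(\bm x^\star)=-\bm\lambda^\star$ from stationarity in $\bm x$ and $\bm\lambda^\star\in\alpha\,\partial\|\bm x^\star\|_1$ from stationarity in $\bm z$. Reading this coordinatewise via \eqref{sub_cha}: $x_i^\star>0\Rightarrow\lambda_i^\star=\alpha$; $x_i^\star<0\Rightarrow\lambda_i^\star=-\alpha$; $x_i^\star=0\Rightarrow\lambda_i^\star\in[-\alpha,\alpha]$. Thus $x_i^\star$ and $\lambda_i^\star$ always share the same sign, $x_i^\star\lambda_i^\star\ge 0$, and moreover $\lambda_i^\star=\pm\alpha\neq 0$ whenever $x_i^\star\neq 0$; in particular the a-priori possibility ``$x_i^\star\neq 0,\ \lambda_i^\star=0$'' never occurs. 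Now treat the $i$-th summand $(x_i^\star/\sqrt{m_i}+\lambda_i^\star\sqrt{m_i})^2$ case by case: (a) if $x_i^\star,\lambda_i^\star\neq 0$, then by the same-sign property $|x_i^\star/\sqrt{m_i}+\lambda_i^\star\sqrt{m_i}|=|x_i^\star|/\sqrt{m_i}+|\lambda_i^\star|\sqrt{m_i}\ge 2\sqrt{|x_i^\star\lambda_i^\star|}$ by AM--GM, with equality exactly at $m_i=|x_i^\star/\lambda_i^\star|$, i.e. $(\mathcal M^\star)^{-1}$ has $i$-th entry $\text{abs}(x_i^\star/\lambda_i^\star)$; (b) if $x_i^\star=0,\ \lambda_i^\star\neq 0$, the summand equals $(\lambda_i^\star)^2 m_i$, whose infimum $0$ is approached (not attained) as $m_i\downarrow 0$, matching $\text{abs}(x_i^\star/\lambda_i^\star)\downarrow 0$ and $\text{abs}(\lambda_i^\star/x_i^\star)\to+\infty$; (c) if $x_i^\star=\lambda_i^\star=0$, the summand is identically $0$, so every positive $m_i$ is optimal. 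Collecting coordinates gives precisely \eqref{metric_op} and the stated degenerate rules, which proves the proposition.

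For the one-iteration consequence, observe that under zero initialization the first $\bm x$-update of \eqref{R-ADMM} is the minimizer of $f(\bm x)+\tfrac12\|\bm x\|^2_{\mathcal M^\star}$ over $\bm x$; its optimality condition reads $\nabla f(\bm x)+\mathcal M^\star\bm x=0$ on the coordinates where $(\mathcal M^\star)_i$ is finite and $x_i=0$ on the coordinates where $(\mathcal M^\star)_i=+\infty$. On the support of $\bm x^\star$, the sign structure above gives $(\mathcal M^\star\bm x^\star)_i=(\lambda_i^\star/x_i^\star)\,x_i^\star=\lambda_i^\star=-\nabla_i f(\bm x^\star)$, so the condition holds; off the support $x_i^\star=0$ is enforced. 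By convexity $\bm x^1=\bm x^\star$, i.e. a single proximal evaluation solves \eqref{basic0}.

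I expect the genuine work to lie in two places. First, the step ``KKT $\Rightarrow$ same-sign and $\lambda_i^\star\neq 0$ off the support'' is exactly where the $l_1$ subdifferential \eqref{sub_cha} is indispensable: for a generic pair of vectors no positive-definite metric could make each scalar term behave as needed, so the argument must invoke this structure explicitly rather than as a black box. Second, the coordinates with $x_i^\star=0$ are delicate: there the minimum of \eqref{to_opt} is only a limiting infimum ($m_i\downarrow 0$, equivalently $(\mathcal M^\star)_i\uparrow+\infty$), so the ``one iteration'' assertion must be read up to an arbitrarily small perturbation of a genuine positive-definite metric, and one must verify that the closed-form evaluation of Theorem \ref{prop_evaluation}---hence the $\bm z$-update of \eqref{R-ADMM}---remains valid along this limiting family of diagonal metrics.
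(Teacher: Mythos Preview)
Your proof is correct, but it takes a longer route than the paper's. The key observation you miss is that the cross term in $\|\bm\zeta^\star\|^2$ is \emph{independent of the metric}: $\langle\mathcal S\bm x^\star,(\mathcal S^*)^{-1}\bm\lambda^\star\rangle=\langle\bm x^\star,\bm\lambda^\star\rangle$, a constant in $\mathcal S$. The paper drops this term at the outset (this is exactly why \eqref{to_opt} is written as $\|\mathcal S\bm{Ax}^\star\|^2+\|(\mathcal S^*)^{-1}\bm\lambda^\star\|^2$ plus $\bm\zeta^0$-terms), so under zero initialization one minimizes the bare sum $\sum_i|x_i^\star|^2/m_i+m_i|\lambda_i^\star|^2$. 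Each summand is then handled by elementary calculus (or AM--GM) without any KKT input: the three cases $(x_i^\star,\lambda_i^\star)$ both nonzero, only $x_i^\star=0$, and both zero, are read off directly. Your detour through the subdifferential sign structure is unnecessary for the proposition itself.

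That said, your route does buy something: you explicitly rule out the configuration $x_i^\star\neq 0,\ \lambda_i^\star=0$ via $\bm\lambda^\star\in\alpha\,\partial\|\bm x^\star\|_1$, whereas the paper's proof simply does not list this case. And your KKT/sign argument is precisely what the paper deploys \emph{later}, in the separate one-iteration theorem (Claim \ref{claim} and the theorem following it), so the extra material you included is not wasted---it just belongs to the next result, not to this proposition.
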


\begin{proof}
	Consider zero initialization, \eqref{to_opt} can be simplified into
\begin{align}\label{ob}
\underset{\mathcal S  }{\text{minimize}}\,\,  	 \Vert \mathcal S\bm{x}^\star\Vert^2  +  \Vert (\mathcal{S}^*)^{-1} \bm{\lambda}^\star    \Vert^2 .
\end{align}

To solve  the above problem, first we decompose the metric \eqref{metri_op}.
For the positive vector $ \bm{m} $,  denote its non-zero decomposition as
\begin{equation}
\bm{m}  = \bm{h}  \odot \bm{h} \, \iff \, 	\mathcal{S}^{-1} (\bm{v}) = 	{\mathcal{S}^*}^{-1} (\bm{v}) = \bm{h} \odot \bm{v} \,\iff\, 	\mathcal{S} (\bm{v}) =  \bm{v} \oslash \bm{h} .
\end{equation}
with $ \bm{h} \in \mathscr R / \{0\}$. Clearly, the above defined $ \mathcal{S} $ is bijective, bounded and linear.
Then, following the goal in \eqref{ob}, we can rewrite its objective function into 
\begin{align}	 \Vert \mathcal S\bm{x}^\star\Vert^2  +  \Vert (\mathcal{S}^*)^{-1} \bm{\lambda}^\star    \Vert^2 
=\, &    	 \Vert\bm{x}^\star \oslash \bm{h}\Vert^2  +  \Vert  \bm{h}  \odot  \bm{\lambda}^\star    \Vert^2 \\
=\, &   \sum_{i}\vert   x^\star_i / h_i  \vert^2 +  \sum_{i} \vert h_i  \lambda^\star_i  \vert^2 \nonumber\\
=\, &  \sum_{i} \vert x^\star_i\vert^2/m_i +  \sum_{i} m_i \vert{\lambda}^\star_i \vert^2, 
\end{align}
Due to the element-wise operation, the optimal metric selection problem is separable. For the $ i $-th problem, we have
\begin{equation}
\underset{m_i>0}{\text{argmin}} \,\, \vert{x}^\star_i\vert^2/m_i  +  m_i\vert{\lambda}^\star_i \vert^2.
\end{equation}
Suppose $x_i^\star =  \lambda_i^\star = 0$, the choice of $ m_i^\star  $ is can take any  positive value.
Suppose only $  x^\star_i = 0 $, the optimal choice is by setting 
\begin{equation}
{m}^\star_i \downarrow 0,
\end{equation}
and the objective value will be arbitrarily close to 0.
Otherwise, the optimal solution is given by
\begin{equation}
m_i^\star = \frac{\vert  x^\star_i\vert}{\vert{\lambda}^\star_i\vert}. 
\end{equation}
The proof is now concluded.

\end{proof}

Following the above definition, it is worth mentioning a later useful property.
\begin{lem}
Let us note that the metric $\mathcal{M}$ is a non-zero/bijective mapping. For the  element-wise operation in \eqref{metric_op},
it is safe to treat 
\begin{equation}
\bm{a} \oslash {\bm{b}}  \odot {\bm{b}}  \,\,=\,\, \bm{a}  \odot {\bm{b}} \oslash {\bm{b}}  \,\,=\,\,  {\bm{a}}.
\end{equation}
\end{lem}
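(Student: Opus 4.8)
The plan is to reduce the displayed coordinatewise identity to the tautology $\mathcal{B}\circ\mathcal{B}^{-1} = \mathcal{B}^{-1}\circ\mathcal{B} = \mathcal{I}$ for a bijective element-wise operator $\mathcal{B}$, and then to check that the convention adopted for the degenerate coordinates does not break this. First I would observe that, by \eqref{metric_op}, the vector $\bm{b}$ appearing here is nothing but the diagonal of a bijective, element-wise linear map $\mathcal{B}$ (concretely $\mathcal{M}^\star$, or equivalently one of the decomposed metrics $\mathcal{S}$ of Proposition~\ref{prop_matrix_step}): multiplying by $\bm{b}$ is applying $\mathcal{B}$, and dividing by $\bm{b}$ is applying $\mathcal{B}^{-1}$. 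Hence on the coordinates where $b_i$ is a genuine positive real,
\begin{equation*}
\bm{a}\oslash\bm{b}\odot\bm{b} = \mathcal{B}\big(\mathcal{B}^{-1}\bm{a}\big) = \bm{a}, \qquad \bm{a}\odot\bm{b}\oslash\bm{b} = \mathcal{B}^{-1}\big(\mathcal{B}\bm{a}\big) = \bm{a},
\end{equation*}
so there is nothing to prove in the nondegenerate case.

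Next I would treat the coordinates singled out in Proposition~\ref{prop_matrix_step}, namely those with $x_i^\star = 0$ or $\lambda_i^\star = 0$, where $b_i$ is not a number but a symbolic one-sided limit ($b_i \to +\infty$, respectively $b_i \to 0^+$). The point is simply that for every fixed scalar $a_i$ and every $t\neq 0$ one has the exact identity $(a_i/t)\cdot t = (a_i\cdot t)/t = a_i$, with no $t$-dependence on the right-hand side; so running $t$ along the prescribed approach (to $0^+$ or to $+\infty$) still returns $a_i$, and the two orders of operations agree. When $x_i^\star = \lambda_i^\star = 0$ the coordinate is unconstrained and any positive $t$ gives $a_i$ outright. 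Thus the cancellation holds coordinate by coordinate, including on the degenerate set, which proves the lemma.

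I expect the only real subtlety — and, I believe, the entire content of the statement — to be notational rather than analytic: when one forms $\bm{a}\oslash\bm{b}$ (or $\bm{a}\odot\bm{b}$) first, a degenerate coordinate produces a literal $\infty$ or $0$, and the subsequent cancelling operation then looks like an indeterminate $\infty\cdot 0$. The lemma is exactly the permission to bypass this by reading the bracketed expression as $\lim_{t}(a_i/t)\cdot t$, equivalently as $\mathcal{B}\circ\mathcal{B}^{-1}$ acting on $\bm{a}$; once the element-wise operations are tied back to the genuinely bijective metric (which, by hypothesis, never has a zero or undefined diagonal entry), associativity and commutativity of the Hadamard product make the bracketing irrelevant and the identity immediate.
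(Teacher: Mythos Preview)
Your argument is correct and considerably more careful than what appears in the paper: the paper states this lemma as a one-line observation with no accompanying proof environment at all. Your reduction to $\mathcal{B}\circ\mathcal{B}^{-1}=\mathcal{B}^{-1}\circ\mathcal{B}=\mathcal{I}$ for the bijective diagonal operator, together with the limit interpretation on the degenerate coordinates from Proposition~\ref{prop_matrix_step}, is exactly the right justification and supplies what the paper leaves implicit.
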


\subsection{Provable one-iteration convergence}\label{sec_one_itr}
Here, we employ the optimal  metric choice to solve the problem \eqref{l1}, which yields a closed-form solution.
It may worth noticing that the result is not limited to ADMM, since it is only a  proximal evaluation step.

\begin{claim}\label{claim}
Selecting the  variable metric $\mathcal{M}$ via Proposition \ref{prop_matrix_step}. Then, under zero-initialization,
\begin{equation}
\bm{x}^{1} 
=\, \underset{\bm{x}}{\text{argmin}} \, 	\mathcal{L}_\mathcal{M}(\bm{x},0,0)
=\, \underset{\bm{x}}{\text{argmin}} \,\, f (\bm{x} ) + \frac{1}{2}\Vert\bm{x} - 0 \Vert^2_\mathcal{M}, 
\end{equation}  
is an optimal solution to 
\begin{align}\label{l1_pro}
&\underset{\bm{x},\bm{z}}{\text{minimize}}\quad   f(\bm{x}) + \alpha\Vert \bm{z} \Vert_1 \nonumber\\
&\text{subject\,to}\quad\,\,  \bm{x} = \bm{z},\tag{$ l_1 $}
\end{align}
with the augmented Lagrangian defined as
\begin{equation}\label{lagr2}
\mathcal{L}_\mathcal{M}(\bm{x},\bm{z},\bm{\lambda}) 
\,=\,  f(\bm{x}) + \alpha \Vert   \bm{z} \Vert_1 + \frac{1}{2}\Vert  \bm{x} - \bm{z}  + \mathcal{M}^{-1}\bm{\lambda}\Vert^2_\mathcal{M},  
\end{equation}

\end{claim}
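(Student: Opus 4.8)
The plan is to show that the (unique) minimizer $\bm x^1$ of the strongly convex problem $\min_{\bm x}\, f(\bm x)+\tfrac12\Vert\bm x\Vert^2_{\mathcal M}$ is exactly the primal optimal point $\bm x^\star$ that was used to build $\mathcal M=\mathcal M^\star$ in Proposition \ref{prop_matrix_step}, and therefore solves \eqref{l1_pro}. First I would note that the two expressions for $\bm x^1$ in the statement agree trivially: substituting $\bm z^0=\bm 0$, $\bm\lambda^0=\bm 0$ into \eqref{lagr2} gives $\mathcal L_{\mathcal M}(\bm x,\bm 0,\bm 0)=f(\bm x)+\tfrac12\Vert\bm x\Vert^2_{\mathcal M}$, so the first ADMM $\bm x$-update is precisely this argmin; since $f$ is smooth and $\tfrac12\Vert\cdot\Vert^2_{\mathcal M}$ is $\mathcal M$-strongly convex on the subspace that the (possibly degenerate) metric leaves finite, $\bm x^1$ exists, is unique, and is characterized by the stationarity equation $\nabla f(\bm x^1)+\mathcal M\bm x^1=\bm 0$. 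Note this step uses nothing about the ADMM machinery beyond the form of the first iterate; the closed-form of the other updates (Theorem \ref{prop_evaluation}) is not needed here.

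Next I would write down the KKT system for \eqref{l1_pro}: a primal–dual triple with $\bm x^\star=\bm z^\star$, $\nabla f(\bm x^\star)+\bm\lambda^\star=\bm 0$, and $\bm\lambda^\star\in\alpha\,\partial\Vert\bm x^\star\Vert_1$. Invoking the $l_1$ sub-differential characterization \eqref{sub_cha} componentwise, the last inclusion says $\lambda^\star_i=\alpha\,\text{sgn}(x^\star_i)$ whenever $x^\star_i\neq 0$ (hence $|\lambda^\star_i|=\alpha$ and $\text{sgn}(\lambda^\star_i)=\text{sgn}(x^\star_i)$ on the support of $\bm x^\star$), and $|\lambda^\star_i|\le\alpha$ whenever $x^\star_i=0$. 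I would then plug $\bm x^\star$ into the stationarity equation of $\bm x^1$ and verify it. On the support of $\bm x^\star$, the construction $\mathcal M^\star=\text{abs}(\bm\lambda^\star\oslash\bm x^\star)$ together with the sign agreement and the cancellation rule $\bm a\oslash\bm b\odot\bm b=\bm a$ established after Proposition \ref{prop_matrix_step} give $(\mathcal M^\star\bm x^\star)_i=|\lambda^\star_i/x^\star_i|\,x^\star_i=|\lambda^\star_i|\,\text{sgn}(x^\star_i)=\lambda^\star_i$, so $\nabla f(\bm x^\star)_i+(\mathcal M^\star\bm x^\star)_i=-\lambda^\star_i+\lambda^\star_i=0$. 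Off the support, the chosen metric entry $\mathcal M^\star_i\to+\infty$ pins the $i$-th coordinate of the minimizer, forcing $x^1_i=0=x^\star_i$, and the stationarity residual at such $i$ collapses to the harmless requirement $-\nabla f(\bm x^1)_i=\lambda^\star_i$ with $|\lambda^\star_i|\le\alpha=\alpha\,\partial|x^1_i|$ at $x^1_i=0$ — which is exactly the sub-differential inclusion needed for $\bm x^1$ to be optimal for \eqref{l1_pro} there (degenerate indices $x^\star_i=\lambda^\star_i=0$ are covered by the equivalence of all positive choices noted in Proposition \ref{prop_matrix_step}). Combining the two cases, $\bm x^\star$ satisfies the characterizing equation of $\bm x^1$, so by uniqueness $\bm x^1=\bm x^\star$, an optimal solution of \eqref{l1_pro}.

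The main obstacle — and the only place where genuine care is needed — is the treatment of the zero coordinates of $\bm x^\star$, where the optimal metric degenerates ($\mathcal M^\star_i\to+\infty$, equivalently $\mathcal M^{\star-1}_i\to 0$). There one must argue via the pinning/limit interpretation of Proposition \ref{prop_matrix_step} that the restricted minimization still produces $x^1_i=0$ and that the indeterminate term $\mathcal M^\star_i x^1_i$ may absorb the value $\lambda^\star_i$, after which the bound $|\lambda^\star_i|\le\alpha$ closes the argument; the remaining (support) part is a one-line matching of first-order conditions. I would present the zero-coordinate discussion explicitly so the claim is not read as asserting exact optimality with a finite but large metric, for which only near-optimality holds.
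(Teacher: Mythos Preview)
Your proposal is correct and follows a genuinely different route from the paper's own argument. You verify \emph{directly} that $\bm{x}^\star$ satisfies the first-order condition $\nabla f(\cdot)+\mathcal{M}^\star(\cdot)=\bm 0$ of the $\bm{x}^1$-subproblem (coordinate-wise on and off the support of $\bm{x}^\star$), and then invoke uniqueness of the strongly convex minimizer to conclude $\bm{x}^1=\bm{x}^\star$. The paper instead argues by \emph{contradiction}: it assumes $\bm{x}^1\neq\bm{x}^\star$ and runs an element-wise case split into three sub-cases --- (i) $x^1_i\neq 0,\ x^\star_i=0$ forces $\{\mathcal{M}\bm{x}^1\}_i\to+\infty$ and hence an unbounded gradient; (ii) $x^1_j=0,\ x^\star_j\neq 0$ already gives the $l_1$ optimality inclusion, a contradiction; (iii) $x^1_l\neq 0,\ x^\star_l\neq 0$ with $x^1_l=\kappa x^\star_l$ forces $\nabla_l f$ to be linear, hence $f_l$ quadratic, hence $x^\star_l=0$, again a contradiction. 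Your direct-plus-uniqueness argument is shorter and avoids the somewhat delicate sub-case (iii) entirely; the paper's case analysis, on the other hand, makes the behaviour at degenerate (off-support) indices explicit without appealing to a limit/pinning interpretation of the infinite metric entries. Your final paragraph correctly identifies this degeneracy as the one place requiring care; the paper resolves it via sub-case (i) rather than via the pinning argument you sketch.
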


To show this, we need 2 lemmas.
%

\begin{lem}\label{lem02}
The dual solution $ \bm \lambda^\star $ of problem \eqref{l1_pro} satisfies
\begin{equation}
 \bm \lambda^\star \in \alpha \,\partial \Vert  \bm{x}^\star  \Vert_1
\end{equation}
\end{lem}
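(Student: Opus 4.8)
The plan is to read the inclusion off the first-order optimality (KKT) conditions of the equality-constrained convex program \eqref{l1_pro}. Since $f \in \Gamma_0(\mathscr H)$, $\alpha\Vert\cdot\Vert_1 \in \Gamma_0(\mathscr H)$, and the coupling constraint $\bm{x} = \bm{z}$ is affine, strong duality holds and a primal--dual optimal triple $(\bm{x}^\star,\bm{z}^\star,\bm{\lambda}^\star)$ exists, where $\bm{\lambda}^\star$ is precisely the dual solution named in the statement. I would argue directly from the augmented Lagrangian \eqref{lagr2}, using the fixed-point characterization of ADMM optimality: $\bm{z}^\star$ minimizes $\bm{z} \mapsto \mathcal{L}_\mathcal{M}(\bm{x}^\star,\bm{z},\bm{\lambda}^\star)$, and primal feasibility gives $\bm{x}^\star = \bm{z}^\star$. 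Equivalently, one may work with the ordinary Lagrangian $\mathcal{L}(\bm{x},\bm{z},\bm{\lambda}) = f(\bm{x}) + \alpha\Vert\bm{z}\Vert_1 + \langle\bm{\lambda},\bm{x}-\bm{z}\rangle$, since at a feasible point the quadratic penalty in \eqref{lagr2} has both zero value and zero gradient, so the two stationarity systems coincide.

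The computation is then one line. Applying the subdifferential sum rule to $\bm{z} \mapsto \mathcal{L}_\mathcal{M}(\bm{x}^\star,\bm{z},\bm{\lambda}^\star)$ --- valid because $\langle\bm{\lambda}^\star,\cdot\rangle$ and $\tfrac12\Vert\cdot\Vert_\mathcal{M}^2$ are finite everywhere --- minimality in $\bm{z}$ yields
\begin{equation*}
0 \in \alpha\,\partial\Vert\bm{z}^\star\Vert_1 - \mathcal{M}\big(\bm{x}^\star - \bm{z}^\star + \mathcal{M}^{-1}\bm{\lambda}^\star\big) = \alpha\,\partial\Vert\bm{z}^\star\Vert_1 - \mathcal{M}(\bm{x}^\star - \bm{z}^\star) - \bm{\lambda}^\star .
\end{equation*}
Substituting primal feasibility $\bm{x}^\star = \bm{z}^\star$ collapses the middle term, so $\bm{\lambda}^\star \in \alpha\,\partial\Vert\bm{z}^\star\Vert_1 = \alpha\,\partial\Vert\bm{x}^\star\Vert_1$, using also $\partial(\alpha\Vert\cdot\Vert_1) = \alpha\,\partial\Vert\cdot\Vert_1$ for $\alpha>0$. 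This is the assertion.

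There is no real obstacle here; it is a direct application of convex optimality conditions. The only two points deserving an explicit line are the existence of a dual optimal $\bm{\lambda}^\star$ (zero duality gap together with dual attainment), which follows from the affine constraint qualification for \eqref{l1_pro}, and the identification of the augmented-Lagrangian stationary point with the KKT point, which is the standard fact underlying ADMM and is already implicit in Proposition \ref{fpi} specialized to $\bm{A} = \bm{B} = \mathcal{I}$, $\bm{c} = 0$. I would dispatch these in a short preamble and keep the body of the proof to the one-line subgradient computation above.
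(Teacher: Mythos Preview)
Your proposal is correct and follows essentially the same route as the paper: take the $\bm{z}$-stationarity condition of the (augmented) Lagrangian at the saddle point, then invoke primal feasibility $\bm{x}^\star=\bm{z}^\star$ to obtain $\bm{\lambda}^\star\in\alpha\,\partial\Vert\bm{x}^\star\Vert_1$. Your version is in fact more carefully written---you make explicit that the quadratic penalty term vanishes at feasibility so the augmented and ordinary Lagrangian stationarity coincide, whereas the paper's proof silently switches to the ordinary Lagrangian for the $\bm{z}$-minimization step.
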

\begin{proof}
	In view of problem \eqref{l1_pro}, the primal-dual solution, or equivalently,
	the saddle point is given by
	\begin{equation}
	(\bm{x}^\star,\bm{z}^\star,\bm{\lambda}^\star) = \, \underset{\bm{\lambda}^\star}{\sup}\, \underset{\bm{x}^\star, \bm{z}^\star}{\inf} \, \mathcal{L}_\mathcal{M}(\bm{x},\bm{z},\bm{\lambda}) 
	\end{equation}
	with  $ \mathcal{L}_\mathcal{M} $ given in \eqref{lagr2}. It follows that
\begin{align}
 \underset{\bm{z}}{\inf} \,\,  \alpha\Vert \bm{z}^\star \Vert_1 + \langle \bm{\lambda}^\star, \,  - \bm{z}^\star \rangle 
 \iff 
  \bm{\lambda}^\star \in  \alpha \, \partial \Vert \bm{z}^\star  \Vert_1  
\end{align}
By the problem constraint $ \bm{x} = \bm{z}  $, the saddle point satisfies
\begin{equation}
\bm{x}^\star = \bm{z}^\star
\end{equation}
which concludes the proof.
\end{proof}

\begin{lem}\label{lem03}
	The following holds:
	\begin{equation}
	\text{sgn}(\bm{x}) \,\odot  \text{abs}\big(  \partial \Vert  \bm{x}  \Vert_1	\big) = \text{sgn}(\bm{x}) ,
	\end{equation}
	where abs$ (\cdot) $ denotes the element-wise absolute value operation.
\end{lem}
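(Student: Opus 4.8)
The plan is to reduce the claimed identity to a coordinate-wise verification, invoking the subdifferential characterization of the $l_1$-norm established earlier in \eqref{sub_cha}. First I would recall that every element $\bm{v} \in \partial\Vert\bm{x}\Vert_1$ can be written as $\bm{v} = \text{sgn}(\bm{x}) + \bm{w}$ with $\bm{x}\odot\bm{w} = \bm{0}$ and $\Vert\bm{w}\Vert_\infty \leq 1$. Accordingly, the asserted identity is to be read as: for every such $\bm{v}$, one has $\text{sgn}(\bm{x})\odot\text{abs}(\bm{v}) = \text{sgn}(\bm{x})$; equivalently, the set $\{\,\text{sgn}(\bm{x})\odot\text{abs}(\bm{v}) : \bm{v}\in\partial\Vert\bm{x}\Vert_1\,\}$ collapses to the singleton $\{\text{sgn}(\bm{x})\}$. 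I would state this reading explicitly at the outset so the set-valued notation is unambiguous.

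Next I would partition the index set into $I_{\neq 0} = \{\,i : x_i \neq 0\,\}$ and $I_{0} = \{\,i : x_i = 0\,\}$ and check the identity on each block. On $I_{\neq 0}$, the constraint $\bm{x}\odot\bm{w} = \bm{0}$ forces $w_i = 0$, so $v_i = \text{sgn}(x_i) \in \{-1,+1\}$; hence $\text{abs}(v_i) = 1$ and $\text{sgn}(x_i)\cdot\text{abs}(v_i) = \text{sgn}(x_i)$. On $I_{0}$, we have $\text{sgn}(x_i) = 0$, so the product $\text{sgn}(x_i)\cdot\text{abs}(v_i)$ vanishes regardless of the value of $v_i\in[-1,1]$, which again equals $\text{sgn}(x_i)$. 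Assembling the two blocks yields the claim.

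I do not expect any substantive obstacle; the argument is elementary. The only point requiring mild care is fixing the interpretation of $\text{abs}\big(\partial\Vert\bm{x}\Vert_1\big)$ as a set-valued object and confirming that the conclusion does not depend on the particular subgradient selected — and that is exactly what the case $x_i = 0$ handles, since there the ambiguous coordinate is annihilated by the factor $\text{sgn}(x_i) = 0$. A one-line remark connecting this lemma back to Lemma \ref{fact} (sign determines the subdifferential) would round off the exposition, but is not needed for the proof itself.
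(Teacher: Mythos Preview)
Your proposal is correct and follows essentially the same approach as the paper: both arguments perform a coordinate-wise case split into indices where $x_i\neq 0$ and indices where $x_i=0$, and verify the identity on each block. Your write-up is in fact slightly more careful than the paper's, since you make the set-valued reading of $\text{abs}(\partial\Vert\bm{x}\Vert_1)$ explicit and treat the zero-coordinate case by noting that $\text{sgn}(x_i)=0$ annihilates any $v_i\in[-1,1]$, whereas the paper's notation at that step is a bit loose.
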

\begin{proof}
	First, consider the collection of  non-zero elements of $ \bm{x} $, denoted by $ \{ x_j \,|\,  x_j \neq 0,\, \forall j\} $. Then, the above can be written into
	\begin{equation}
	\text{sgn}(x_j) \cdot  \text{abs}\big( \text{sgn}(x_j) \big) = \text{sgn}(x_j) .
	\end{equation}
	Now, consider the collection of  zero elements of  $ \bm{x} $, denoted by $ \{ x_l \,|\,  x_l = 0, \, \forall l \} $. Then, the above can be written as
	\begin{equation}
	\text{sgn}(x_l)  \cdot\text{abs}\big( \text{sgn}(x_l) \big) = 0 \cdot\text{abs}\big( \text{sgn}(x_l) \big) =  \,0\, =  \text{sgn}(x_l).
	\end{equation}
	The proof is therefore concluded.
\end{proof}

Now, we are ready to show Claim \ref{claim}.
\begin{theo}
Selecting metric $\mathcal{M}$ via Proposition \ref{prop_matrix_step}. Then,
\begin{equation}
\bm{x}^1 =   \underset{\bm{x}}{\text{argmin}} \,\, f(\bm{x}) + \frac{1}{2}\Vert \bm{x}\Vert^2_{\mathcal{M}},
\end{equation}
is an optimal solution to  problem \eqref{l1_pro}.
\end{theo}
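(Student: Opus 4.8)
The plan is to verify directly that the optimal primal solution $\bm{x}^\star$ of \eqref{l1_pro} is exactly the point defining $\bm{x}^1$. Since the program $\min_{\bm{x}} f(\bm{x}) + \frac12\Vert\bm{x}\Vert^2_{\mathcal{M}}$ is convex, it suffices to check that $\bm{x}^\star$ satisfies its first-order optimality condition; then $\bm{x}^1 = \bm{x}^\star$, and because the constraint in \eqref{l1_pro} is $\bm{x} = \bm{z}$, eliminating $\bm{z}$ shows \eqref{l1_pro} is equivalent to $\min_{\bm{x}} f(\bm{x}) + \alpha\Vert\bm{x}\Vert_1$, whose solution is $\bm{x}^\star$; hence $\bm{x}^1$ (paired with $\bm{z} = \bm{x}^1$) is optimal for \eqref{l1_pro}.

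First I would unfold the zero-initialized first step: with $\bm{z}^0 = \bm{0}$ and $\bm{\lambda}^0 = \bm{0}$, the augmented Lagrangian \eqref{lagr2} reduces to $\mathcal{L}_{\mathcal{M}}(\bm{x},\bm{0},\bm{0}) = f(\bm{x}) + \frac12\Vert\bm{x}\Vert^2_{\mathcal{M}}$, so (using that $f$ is smooth) $\bm{x}^1$ is characterized by $\nabla f(\bm{x}^1) + \mathcal{M}\bm{x}^1 = \bm{0}$. In parallel I would record the KKT system of \eqref{l1_pro} from the Lagrangian $f(\bm{x}) + \alpha\Vert\bm{z}\Vert_1 + \langle\bm{\lambda},\bm{x}-\bm{z}\rangle$: stationarity in $\bm{x}$ gives $\nabla f(\bm{x}^\star) = -\bm{\lambda}^\star$, feasibility gives $\bm{x}^\star = \bm{z}^\star$, and Lemma \ref{lem02} gives $\bm{\lambda}^\star \in \alpha\,\partial\Vert\bm{x}^\star\Vert_1$; by the characterization \eqref{sub_cha}, on the support $\{i : x_i^\star \neq 0\}$ this forces $\lambda_i^\star = \alpha\,\text{sgn}(x_i^\star)$, hence there $\text{sgn}(\lambda_i^\star) = \text{sgn}(x_i^\star)$ and $|\lambda_i^\star| = \alpha$.

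The core is then to show $\bm{x}^\star$ satisfies $\nabla f(\bm{x}^\star) + \mathcal{M}^\star\bm{x}^\star = \bm{0}$, i.e. $\mathcal{M}^\star\bm{x}^\star = \bm{\lambda}^\star$, and I would split the coordinates. On the support of $\bm{x}^\star$, using the formula $\mathcal{M}^\star = \text{abs}(\bm{\lambda}^\star\oslash\bm{x}^\star)$ from Proposition \ref{prop_matrix_step}, the element-wise rule stated immediately after that proposition, and Lemma \ref{lem03}, one computes $\mathcal{M}^\star_{ii}x_i^\star = |\lambda_i^\star/x_i^\star|\,x_i^\star = |\lambda_i^\star|\,\text{sgn}(x_i^\star) = \alpha\,\text{sgn}(x_i^\star) = \lambda_i^\star$, using the sign agreement just established. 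On the zero-set $Z = \{i : x_i^\star = 0\}$ there are two sub-cases: if $\lambda_i^\star = 0$, then the $i$-th optimality equation reads $\nabla_i f(\bm{x}^\star) + \mathcal{M}^\star_{ii}\cdot 0 = -\lambda_i^\star = 0$ and holds for any admissible $\mathcal{M}^\star_{ii} > 0$; if $\lambda_i^\star \neq 0$, Proposition \ref{prop_matrix_step} prescribes $\mathcal{M}^\star_{ii}\uparrow +\infty$, which is to be read as imposing the hard constraint $x_i = 0$ in the prox subproblem, so $x_i^1 = 0 = x_i^\star$ on this index set. Combining the three cases, $\bm{x}^\star$ is the minimizer of the program $\min_{\bm{x}} f(\bm{x}) + \frac12\Vert\bm{x}\Vert^2_{\mathcal{M}^\star}$ (strongly convex once the infinite entries are read as equality constraints), so $\bm{x}^1 = \bm{x}^\star$, completing the argument.

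The step I expect to be the main obstacle is the handling of the zero-set $Z$: making precise what an ``infinite'' metric entry means, that is, justifying that $\mathcal{M}^\star_{ii}\uparrow +\infty$ amounts to the hard constraint $x_i = 0$ in $\min_{\bm{x}} f(\bm{x}) + \frac12\Vert\bm{x}\Vert^2_{\mathcal{M}^\star}$ --- equivalently, restricting the whole argument to the complement of $\{i : x_i^\star = 0,\ \lambda_i^\star \neq 0\}$ and passing to a limit of strongly convex subproblems, then checking that $\bm{x}^\star$ (which already vanishes on that set) is feasible and optimal. The remaining bookkeeping --- the element-wise identity $\text{abs}(\bm{\lambda}^\star\oslash\bm{x}^\star)\odot\bm{x}^\star = \bm{\lambda}^\star$ on the support, which rests on Lemma \ref{lem03} and on the sign agreement of $\bm{\lambda}^\star$ with $\bm{x}^\star$ coming from Lemma \ref{lem02} and \eqref{sub_cha} --- is routine.
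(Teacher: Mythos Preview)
Your approach is correct and takes a genuinely different, more direct route than the paper. You verify that $\bm{x}^\star$ satisfies the first-order optimality condition of the prox subproblem $\min_{\bm{x}} f(\bm{x}) + \tfrac12\Vert\bm{x}\Vert^2_{\mathcal{M}^\star}$ (equivalently, that $\mathcal{M}^\star\bm{x}^\star = \bm{\lambda}^\star = -\nabla f(\bm{x}^\star)$ coordinate-wise), and then invoke strong convexity of that subproblem to conclude $\bm{x}^1 = \bm{x}^\star$. The paper shares your forward computation on the support of $\bm{x}^\star$, but instead of appealing to uniqueness it argues by contradiction: it supposes $\bm{x}^1 \neq \bm{x}^\star$ and runs through three cases according to whether a differing coordinate has $x_i^1$ zero, $x_i^\star$ zero, or both nonzero, deriving a contradiction in each. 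Your route is shorter and sidesteps the paper's case (iii), whose argument that $f$ must be coordinate-wise quadratic is itself informal (it treats $\nabla_l f$ as if it were a function of $x_l$ alone). Concerning the indices with $x_i^\star = 0$ and $\lambda_i^\star \neq 0$: you read $\mathcal{M}_{ii}^\star \to +\infty$ as a hard constraint $x_i = 0$, while the paper in its case (i) argues instead that any $x_i^1 \neq 0$ there would force $\{\mathcal{M}^\star\bm{x}^1\}_i \to +\infty$ and hence $\nabla_i f(\bm{x}^1)$ unbounded, contradicting $f \in \Gamma_0(\mathscr{H})$. Both treatments of the limiting metric are informal and amount to the same penalty-method passage; the obstacle you flagged is real but no more severe than what the paper itself leaves implicit.
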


\begin{proof}
In view of  problem \eqref{l1_pro}, an iterate $ \bm{x}^k $ is a solution if and only if	
\begin{equation}\label{basic}
0\, \in \,    \nabla f(\bm{x}^k ) + \alpha\, \partial \Vert  \bm{x}^k   \Vert_1.
\end{equation}	
	We will show that the above holds under our choice of metric.

	By definition, the following always holds:
	\begin{align}\label{rel0}
	\bm{x}^1 =   \underset{\bm{x}}{\text{argmin}} \,\, f(\bm{x}) + \frac{1}{2}\Vert \bm{x}\Vert^2_{\mathcal{M}} 
	\iff
	 0\, = \,    \nabla f(\bm{x}^1) +  \mathcal{M}  (\bm{x}^1). \tag{cond.}
	\end{align}

	To start, suppose  
	\begin{equation}
	\bm{x}^1 =   \bm{x}^\star. 
	\end{equation}
	Then, \eqref{rel0} implies that
	\begin{align}
	0\, = \,    \nabla f(\bm{x}^1) + \mathcal{M} (\bm{x}^1)  
	\iff
	0\, &= \,    \nabla f(\bm{x}^1) + \mathcal{M} (\bm{x}^\star)  \nonumber\\
		\iff
	0\, &= \,    \nabla f(\bm{x}^1) +   	\text{abs}\big(   \bm{\lambda}^\star \oslash \bm{x}^\star 	\big) \odot \bm{x}^\star \nonumber\\
	\iff
	0\, &= \,    \nabla f(\bm{x}^1) +    \text{abs}\big(   \bm{\lambda}^\star 	\big) \odot \text{sgn}(\bm{x}^\star)		\nonumber\\
	\iff
	0\, &\in \,    \nabla f(\bm{x}^1) +   \text{abs}\big( \alpha\, \partial \Vert  \bm{x}^\star \Vert_1 \big)  \odot \text{sgn}(\bm{x}^\star) \tag{via Lemma \ref{lem02}}\\
	\iff
	0\, &\in \,    \nabla f(\bm{x}^1) + \alpha\, \text{sgn}(\bm{x}^\star)  \tag{via Lemma \ref{lem03}}\\
	\iff
	0\, &\in \,    \nabla f(\bm{x}^1) + \alpha\, \partial \Vert  \bm{x}^\star \Vert_1 \nonumber\\
	\iff
	0\, &\in \,    \nabla f(\bm{x}^1) + \alpha\, \partial \Vert  \bm{x}^1 \Vert_1,
	\end{align}
	which meets condition \eqref{basic}. That is,   $ \bm{x}^1 $ is an optimal solution.
	
	Conversely, suppose 
	\begin{equation}\label{ass0}
	\bm{x}^1 \neq   \bm{x}^\star. 
	\end{equation}	
	Then,  \eqref{rel0} admits
	\begin{align}\label{c1}
	\bm{0}\, = \,    \nabla f(\bm{x}^1) + \text{abs}\big(   \bm{\lambda}^\star \oslash \bm{x}^\star \odot \bm{x}^1\big)  \odot   \text{sgn} (\bm{x}^1) ,
	\end{align}	
	which involves  3 sub-cases.

	 \vspace{5pt}
	 
	$ \bullet $ (i) Additional to \eqref{ass0}, suppose the following  holds:
	\begin{equation}\label{as01}
	\emptyset \neq \{i\, |\,  x^1_i \neq 0, \,  x^\star_i = 0 \} .
	\end{equation}
	Then, for any such index $ i $, we instantly have
	\begin{equation}
	\{\mathcal{M}\bm{x}^1\}_i  = 
\text{abs}\bigg(	\frac{\lambda_i^\star}{x^\star_i }\cdot  x^1_i\bigg)  \cdot \text{sgn} ( x^1_i )\, \rightarrow\, + \infty,
	\end{equation}
	where by  $ \{\cdot\}_i $ we denote the $ i $-th element of the set.
	Then, by \eqref{rel0}, the following always hods:
	\begin{align}
		   0\,& = \,    \{\nabla f(\bm{x}^1)\}_i + \{\mathcal{M}\bm{x}^1\}_i   \nonumber\\
	\iff
	0\, &= \,     \{\nabla f(\bm{x}^1)\}_i +  ( + \infty ),
	\end{align}
	which  says that the gradient is unbounded below. This case is excluded under the basic CCP assumption $ f \in \Gamma_0  (\mathscr H)$.
	
		 \vspace{5pt}
	
	$ \bullet $ (ii) Additional to \eqref{ass0}, suppose the following  holds:
	\begin{equation}
	\emptyset \neq \{j\, |\,  x^1_j = 0, \,  x^\star_j \neq 0 \} .
	\end{equation}
	Then, for any such index $ j $,
		\begin{align}
	\{\mathcal{M}\bm{x}^1\}_j  
	&= \text{abs}\bigg(\frac{\lambda_j^\star}{x^\star_j }\cdot  x^1_j\bigg)  \cdot \text{sgn} ( x^1_j ) \nonumber\\
	&=  0 \nonumber\\
	& \in   \alpha \{\partial \Vert  \bm{x}^1 \Vert_1\}_j
	\end{align}
	Then, by \eqref{rel0}, the following always hods:
      \begin{align}
      0\, &= \,    \{\nabla f(\bm{x}^1)\}_j + \{\mathcal{M}\bm{x}^1\}_j   \nonumber\\
      \iff
      0\, &\in \,     \{\nabla f(\bm{x}^1)\}_j +  \alpha\{\partial \Vert  \bm{x}^1 \Vert_1\}_j,
      \end{align}
	which satisfies \eqref{basic}, and therefore $ \bm{x}^1 $ is an optimal solution. This is a contradiction to our assumption \eqref{ass0}.

	   \vspace{5pt} 	
	   	
	   	$ \bullet $ (iii) Additional to \eqref{ass0}, suppose the following  holds:
	   	\begin{equation}\label{assu_l}
	   \emptyset \neq \{l\, |\,  x^1_l \neq 0, \,  x^\star_l\neq 0 \} .
	   \end{equation}
	   Then, for any such index $ l $, we can denote their scalar difference by $\kappa \neq 0$, i.e.,
	   \begin{equation}
	   x^1_l = \kappa\, x^\star_l.
	   \end{equation}
	  	Then, 	 for any such index $ i $, by Lemma \ref{lem02}, the non-zero element $  x^\star_l\neq 0 $ gives
	  	\begin{equation}
	  	 \lambda^\star_l = \alpha \, \text{sgn} (x^\star_l).
	  	\end{equation}
	  It follows that
	  \begin{align}
	  \{\mathcal{M}\bm{x}^1\}_l  
	  &=  \text{abs}\bigg(	\frac{\lambda_l^\star}{x^\star_l }\bigg)  \cdot  \kappa\, x^\star_l \nonumber\\
	  &=  \kappa\alpha\,  \text{abs}\bigg( \text{sgn} (x^\star_l) \bigg)  \cdot \text{sgn} (x^\star_l)\nonumber\\
	  &=  \kappa\alpha\,  \text{sgn} (x^\star_l)
	  \end{align}
	Then, by \eqref{rel0}, the following always hods:
	   \begin{align}
	   &0\, = \,    \{\nabla f(\bm{x}^1)\}_l + \{\mathcal{M}\bm{x}^1\}_l   \nonumber\\
	   \iff&
	   0\, = \,     \nabla_l f( \kappa\, {x}^\star_l) +  \kappa\alpha \,  \text{sgn} \big(  {x}^\star_l  \big)  \nonumber\\
	    \iff&
	    \text{sgn} \big(  {x}^\star_l  \big)\, = \,    -\frac{1}{\kappa\alpha} \nabla_l f( \kappa\, {x}^\star_l)   ,
	   \end{align}
	   Since $ {x}^\star_l $ is optimal, by the  optimality condition \eqref{basic}, we have
	    \begin{align}
	   0\, = \,     \nabla_l f(  {x}^\star_l) +  \alpha \,  \text{sgn} \big(  {x}^\star_l  \big)  ,
	   \end{align}
	   Combining the above two relations, yields
	   \begin{equation}
	   \nabla_l f( \kappa\, {x}^\star_l)  = \kappa\,\nabla_l f(  {x}^\star_l),
	   \end{equation}
	   which implies $ \nabla_l f $ is linear, and hence $ f_l $ is quadratic, of form
	   \begin{equation}
	   f_l (x_l) \eqdef  {x}_l\cdot\beta\cdot {x}_l,
	   \end{equation}
	   with $ \beta \in\, ]0, +\infty] $ a certain positive scalar.
	   Since $ l_1 $-norm is separable,  problem \eqref{l1_pro} can be considered separately, and its $ l $-th subproblem is 
	\begin{align}
	\underset{{x}_l}{\text{minimize}}\quad   {x}_l\cdot\beta\cdot {x}_l + \alpha  \cdot \text{sgn} (x_l) ,
	\end{align}
	   with optimal solution 
	   \begin{equation}
	    {x}_l^\star = 0.
	   \end{equation}
	   This is a contradiction with our non-zero element assumption, recall from \eqref{assu_l}.
	   
	   In summary, $ \bm{x}^1  $ has to be the optimal solution.
	   The proof is therefore concluded.
	      
   \end{proof}


	\subsection{Extension}
	Here, in a simple way, we  extend our results to the following  more general $ l_1 $-minimization:
	\begin{align}
	\underset{\bm{x}}{\text{minimize}}\quad   f(\bm{x}) + \alpha\Vert \bm{Fx} \Vert_1,
	\end{align}
	with  $ f \in \Gamma_0 (\mathscr H)  $  being smooth, and  $\alpha > 0$ a   regularization parameter, and $ \bm{F} $ a full column-rank (injective) matrix.
	The above can be rewritten into:
	\begin{align}\label{ext}
	&\underset{\bm{x},\bm{z}}{\text{minimize}}\quad   f(\bm{x}) + \alpha\Vert \bm{z} \Vert_1 \nonumber\\
	&\text{subject\,to}\quad  \bm{Fx} = \bm{z}.\tag{$ l_1 $ 2}
	\end{align}
	The corresponding augmented Lagrangian is 
	\begin{equation}
	\mathcal{L}_\mathcal{M}(\bm{x},\bm{z},\bm{\lambda}) 
	\,\eqdef\,  f(\bm{x}) + \alpha \Vert   \bm{z} \Vert_1 + \frac{1}{2}\Vert  \bm{Fx} - \bm{z}  + \mathcal{M}^{-1}\bm{\lambda}\Vert^2_\mathcal{M},  
	\end{equation}
	We have  the following  extended results:

\vspace{10pt}
	$\bullet$ (i) \,\,\textbf{Metric choice extension}:
	The extended Proposition \ref{prop_matrix_step}: simply replace all $ \bm{x}^\star $-related terms  into  $ \bm{Fx}^\star $, or equivalently, $ \bm{z}^\star $.

\vspace{10pt}
	$\bullet$  (ii) \textbf{One-iteration convergence extension}: The extended Claim \ref{claim}:
	
	\begin{claim}\label{claim2}
		Selecting metric $\mathcal{M}$ via the above extension (i). Then, 
		\begin{equation}
		\bm{x}^{1}  =\, \underset{\bm{x}}{\text{argmin}} \, 	\mathcal{L}_\mathcal{M}(\bm{x},0,0)
		=\, \underset{\bm{x}}{\text{argmin}} \,\, f (\bm{x} ) + \frac{1}{2}\Vert\bm{Fx} - 0 \Vert^2_\mathcal{M}. 
		\end{equation}  
		is an optimal solution to 
	\begin{align}\label{ext_p}
&\underset{\bm{x},\bm{z}}{\text{minimize}}\quad   f(\bm{x}) + \alpha\Vert \bm{z} \Vert_1 \nonumber\\
&\text{subject\,to}\quad  \bm{Fx} = \bm{z}.\tag{$ l_1 $ 2}
\end{align}
	\end{claim}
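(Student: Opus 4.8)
The plan is to replay, under the linear substitution $\bm{w} = \bm{Fx}$, the argument of the theorem that established Claim \ref{claim}, with $\bm{z}^\star = \bm{Fx}^\star$ taking over the role played there by $\bm{x}^\star$. First I would write down the two relevant first-order conditions. Since $f$ is smooth and $\|\cdot\|_1$ is finite everywhere, the subdifferential chain rule gives $\partial(\alpha\|\bm{Fx}\|_1) = \alpha\bm{F}^T\partial\|\bm{Fx}\|_1$, so $\bm{x}^1$ solves \eqref{ext_p} if and only if $\bm{0} \in \nabla f(\bm{x}^1) + \alpha\bm{F}^T\partial\|\bm{Fx}^1\|_1$; on the other hand, the defining $\text{argmin}$ of $f(\cdot) + \frac{1}{2}\|\bm{F}\cdot\|_\mathcal{M}^2$ yields $\bm{0} = \nabla f(\bm{x}^1) + \bm{F}^T\mathcal{M}\bm{Fx}^1$. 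It therefore suffices to prove the pointwise inclusion $\mathcal{M}\bm{Fx}^1 \in \alpha\,\partial\|\bm{Fx}^1\|_1$, because left-multiplying it by $\bm{F}^T$ reconciles the two conditions.

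Next I would record the extended form of Lemma \ref{lem02}: the saddle-point characterization of \eqref{ext_p}, minimized over $\bm{z}$, gives $\bm{\lambda}^\star \in \alpha\,\partial\|\bm{z}^\star\|_1$, and the constraint gives $\bm{z}^\star = \bm{Fx}^\star$ --- this is Lemma \ref{lem02} verbatim with $\bm{Fx}$ in place of $\bm{x}$. With the metric fixed by extension (i), i.e. $\mathcal{M} = \text{abs}(\bm{\lambda}^\star\oslash\bm{z}^\star)$ component-wise (with the conventions of Proposition \ref{prop_matrix_step} on the indices where $z^\star_i = 0$ or $z^\star_i = \lambda^\star_i = 0$), I would split on whether $\bm{Fx}^1 = \bm{z}^\star$; since $\bm{F}$ is injective this is equivalent to $\bm{x}^1 = \bm{x}^\star$. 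In that case $\mathcal{M}\bm{Fx}^1 = \text{abs}(\bm{\lambda}^\star\oslash\bm{z}^\star)\odot\bm{z}^\star = \text{abs}(\bm{\lambda}^\star)\odot\text{sgn}(\bm{z}^\star)$, and feeding $\bm{\lambda}^\star \in \alpha\,\partial\|\bm{z}^\star\|_1$ into Lemma \ref{lem03} collapses this to $\alpha\,\text{sgn}(\bm{z}^\star) \in \alpha\,\partial\|\bm{z}^\star\|_1 = \alpha\,\partial\|\bm{Fx}^1\|_1$, which is exactly the inclusion needed.

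For the converse ($\bm{x}^1 \neq \bm{x}^\star$) I would reproduce the three-subcase dissection of the basic theorem, now reading the defining equation as $\bm{0} = \nabla f(\bm{x}^1) + \bm{F}^T\mathcal{M}\bm{Fx}^1$ and sorting indices by the zero/nonzero pattern of $\bm{Fx}^1$ against $\bm{z}^\star$. On an index with $(\bm{Fx}^1)_i \neq 0$ and $z^\star_i = 0$ the coefficient $\text{abs}(\lambda^\star_i/z^\star_i)$ blows up, making $(\mathcal{M}\bm{Fx}^1)_i$ --- and hence, since $\bm{F}$ has no zero row on such an index, $\bm{F}^T\mathcal{M}\bm{Fx}^1$ --- unbounded, which forces $\nabla f$ to be unbounded, impossible for $f \in \Gamma_0(\mathscr H)$ smooth. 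On an index with $(\bm{Fx}^1)_j = 0$ and $z^\star_j \neq 0$ one has $(\mathcal{M}\bm{Fx}^1)_j = 0 \in \alpha\{\partial\|\bm{Fx}^1\|_1\}_j$, so no obstruction arises there. And on an index with both entries nonzero, writing $(\bm{Fx}^1)_l = \kappa\,z^\star_l$ and using $\lambda^\star_l = \alpha\,\text{sgn}(z^\star_l)$ from the extended Lemma \ref{lem02} gives $(\mathcal{M}\bm{Fx}^1)_l = \kappa\alpha\,\text{sgn}(z^\star_l)$, which --- as in the basic proof --- forces the relevant scalar piece of the objective to be quadratic and its $l_1$-regularized minimizer to be zero, contradicting $z^\star_l \neq 0$. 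Collecting the three subcases leaves $\bm{x}^1 = \bm{x}^\star$, and the second paragraph then closes the argument.

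I expect the genuine obstacle to be the last subcase. In the basic case ($\bm{F} = \bm{I}$) the identity constraint and the separability of $\|\cdot\|_1$ make each coordinate of $\bm{x}$ independently perturbable, so one can literally peel off a scalar subproblem $\min_{x_l} x_l\beta x_l + \alpha\,\text{sgn}(x_l)$ and conclude $x^\star_l = 0$; with a general injective $\bm{F}$ the coordinates of $\bm{Fx}$ are coupled, so the step ``$\nabla_l f$ linear $\Rightarrow$ the $l$-th minimizer is $0$'' must be recovered by restricting to the affine fibre $\{\bm{x} : \bm{Fx} = \bm{w}\}$ --- a single point, since $\bm{F}$ is injective --- so that the value function $\bm{w} \mapsto \inf\{f(\bm{x}) : \bm{Fx} = \bm{w}\}$ is a bona fide smooth function on $\text{ran}(\bm{F})$ to which the separable $\|\cdot\|_1$ argument applies. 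Keeping the $\bm{F}^T$ bookkeeping honest in the first subcase, so that the blow-up of one coordinate of $\mathcal{M}\bm{Fx}^1$ really does survive after applying $\bm{F}^T$, is the other place requiring care, and is handled by the injectivity of $\bm{F}$.
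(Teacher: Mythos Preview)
Your proposal is correct in spirit but takes a longer route than the paper. Rather than replaying the three-subcase argument with explicit $\bm{F}^T$ bookkeeping, the paper performs the variable substitution $\bm{z} = \bm{Fx}$ once and for all: since $\bm{F}$ is injective, $\widetilde{f} \eqdef f \circ \bm{F}^{-1}$ is well-defined (with a well-defined proximal operator per Section~\ref{sec_inj}), and the extended problem collapses to $\min_{\bm{z}}\, \widetilde{f}(\bm{z}) + \alpha\|\bm{z}\|_1$, which has exactly the structure of \eqref{basic0}. The basic theorem is then invoked verbatim with $\widetilde{f}$ in place of $f$. You actually identify this reduction in your final paragraph---the value function on $\text{ran}(\bm{F})$ is precisely $\widetilde{f}$---but only as a rescue device for subcase~(iii); the paper uses it as the entire argument, sidestepping the $\bm{F}^T$ tracking altogether. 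Your explicit replay has the minor benefit of making the role of injectivity concrete (e.g., in subcase~(i), a blowing-up coordinate of $\mathcal{M}\bm{Fx}^1$ survives $\bm{F}^T$ because the corresponding row of $\bm{F}$ cannot vanish, else $(\bm{Fx}^1)_i = 0$), but at the cost of redoing work already packaged in Claim~\ref{claim}.
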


Due to $ \bm{F} $ is assumed being injective, the composition $ f \circ \bm{F}^{-1} $ has a well-defined proximal operator, recall Sec. \ref{sec_inj}. Hence, it is safe to consider the following reformulation:
\begin{align}
&\underset{\bm{z}}{\text{minimize}}\quad   f(\bm{F}^{-1}\bm{z}) + \alpha\Vert \bm{z} \Vert_1 \nonumber\\
&\text{subject\,to}\quad  \bm{x} = \bm{F}^{-1}\bm{z}.
\end{align} 
Then, it can be rewritten into an unconstrained form
\begin{align}
\underset{\bm{z}}{\text{minimize}}\quad   \widetilde{f}(\bm{z}) + \alpha\Vert \bm{z} \Vert_1,
\end{align}
with $  \widetilde{f} \eqdef f\circ \bm{F}^{-1}$. The above shares the same structure as the original \eqref{basic0}. Hence, the same arguments and results apply.

	\section{Conclusion}
	In this paper, a  hidden scaling issue that is  intrinsically  associated  with the classical way of  parametrization is revealed. It is the source of  inconsistencies and complications of some fundamental mathematical  tools, particularly the Moreau identity.  We address this  issue by proposing the \textit{Equilibrate Parametrization}.  A series of useful results  are obtained owing to it. Particularly, for the first time, the general  metric choice that optimizes a worst-case convergence  rate is established.  Also, equivalence is shown between the widely  used preconditioning technique and the metric selection issue. On the application side, a general $ l_1 $-norm regularized problem is studied.  We propose a  closed-form metric choice tailored to it, 
	which yields an optimal solution after a one-time proximal operator evaluation. It is worth noticing that  this ultimate one-iteration convergence requires part of the  optimal  point information (an element-wise ratio  knowledge). The successive estimation technique from \cite{ran2023general} appears not to work well  when applied element-wisely. How to perform a good estimation such  that near  one-iteration convergence can be attained for  practical use is left for future research.

\bibliographystyle{unsrt}
\bibliography{Reference/ref1,Reference/Ref_S,Reference/ML_application,Reference/sr_applications}

\end{document}